\documentclass[preprint,10pt]{elsarticle}

\setlength{\marginparwidth}{1.5cm}

\usepackage{amsmath,amssymb,amsfonts,cmmib57,mathdots,xcolor}
\usepackage[notcite,notref]{showkeys}
\usepackage{amsfonts,amssymb,amsmath,amsthm,cmmib57,color,graphics,mathdots}
\usepackage[latin1]{inputenc}
\usepackage{enumerate}

\newcommand{\FF}{{\mathbb F}}

\newtheorem{theorem}{Theorem}[section]

\newtheorem{definition}[theorem]{Definition}
\newtheorem{corollary}[theorem]{Corollary}
\newtheorem{lemma}[theorem]{Lemma}
\newtheorem{remark}[theorem]{Remark}
\newtheorem{example}[theorem]{Example}
\newtheorem{prop}[theorem]{Proposition}

\DeclareMathOperator{\diag}{diag}
\DeclareMathOperator{\su}{su}
\DeclareMathOperator{\rev}{rev}

\journal{Linear Algebra and its Applications}

\begin{document}

\begin{frontmatter}

\title{Symmetric and skew-symmetric  block-Kronecker linearizations \tnoteref{proj}}

\tnotetext[proj]{The second author has been supported by Engineering and Physical Sciences Research Council grant EP/I005293, and the third author has been supported by Alexander von Humboldt Foundation.}

\author[label1]{Heike Fa\ss bender}
\address[label1]{Institut \emph{Computational Mathematics}, AG Numerik, Technische Universit\"at Braunschweig, 38092 Braunschweig, Germany}
\ead{h.fassbender@tu-braunschweig.de}

\author[label2]{Javier P\'erez\corref{cor1}}
\address[label2]{School of Mathematics, The University of Manchester, Oxford Road, M13 9PL, Manchester, UK}
\ead{javierpa@gmail.com}
\cortext[cor1]{Corresponding author}

\author[label1]{Nikta Shayanfar}
\ead{n.shayanfar@tu-braunschweig.de}

\begin{abstract}
Many applications give rise to structured matrix polynomials.
The problem of constructing structure-preserving strong linearizations of structured matrix polynomials is  revisited in this work and in the forthcoming ones \cite{PartII,PartIII}.
With the purpose of providing a much simpler framework for structure-preserving linearizations for symmetric and skew-symmetric matrix polynomial than the one based on  Fiedler pencils with repetition, we introduce in this work the families of (modified) symmetric and skew-symmetric block Kronecker pencils.
These families provide a  large arena of structure-preserving strong linearizations of symmetric and skew-symmetric matrix polynomials.
When the matrix polynomial has degree odd, these linearizations are  strong regardless of whether the matrix polynomial is regular or singular, and  many of them give rise to structure-preserving companion forms.
When some generic nonsingularity conditions are satisfied, they are also strong linearizations for even-degree regular matrix polynomials.
Many examples of structure-preserving linearizations obtained from Fiedler pencils with repetitions found in the literature are shown to belong (modulo permutations) to these families of linearizations.
In particular, this is shown to be true for the well-known block-tridiagonal symmetric and skew-symmetric companion forms.
Since the families of symmetric and skew-symmetric block Kronecker pencils belong to the  recently introduced set of minimal bases pencils \cite{Fiedler-like}, they inherit all its desirable properties for numerical applications.
In particular, it is shown that eigenvectors, minimal indices, and minimal bases
of matrix  polynomials are easily recovered from those of any of the linearizations constructed in this work.
\end{abstract}

\begin{keyword}
polynomial eigenvalue problem, matrix polynomial, linearization, companion form, structured linearization, structured companion form, symmetric linearization, skew-symmetric linearization, minimal indices, recovery of minimal indices and bases, recovery of eigenvectors

\MSC[2010] 15A22 \sep 15A18 \sep 15A23 \sep 65H04 \sep 65F15
\end{keyword}

\end{frontmatter}

\section{Introduction}\label{sec:intro}
We consider in this work $n\times n$ matrix polynomials over an arbitrary field $\mathbb{F}$ of the form
\begin{equation}\label{eq:poly}
P(\lambda) = \sum_{k=0}^{d}P_k\lambda^k, \quad \mbox{with} \quad P_0,P_1,\hdots,P_d\in\mathbb{F}^{n\times n},
\end{equation}
with some algebraic structure.
In applications, the most relevant of these structures are:

\begin{itemize}
\item symmetric: $P_i=P_i^T$; and skew-symmetric: $P_i=-P_i^T$;

\item palindromic: $P_{d-i}=P_i^T$; and   anti-palindromic: $P_{d-i}=-P_i^T$;

\item alternating: $P(-\lambda) = P(\lambda)^T $ or $P(-\lambda)=-P(\lambda)^T$;
\end{itemize}

\noindent together with their variants involving conjugate-transposition instead of transposition when $\mathbb{F}=\mathbb{C}$; see, for example, \cite{GoodVibrations,Alternating,Palindromic,Skew} and the references therein.
 Since the structure of a matrix polynomial is reflected in its spectrum, it is of fundamental importance 
to exploit its structure in numerical methods to solve polynomial eigenvalue problems \cite{GoodVibrations}. 
Otherwise, it is well known that numerical methods that ignore this may produce results which are meaningless in physical applications \cite{QEP}.

The most common approach to solve the polynomial eigenvalue problem associated with a matrix polynomial $P(\lambda)$ is to linearize $P(\lambda)$ into a matrix pencil (i.e., matrix polynomial of degree 1).
Linearization transforms the polynomial eigenvalue problem into an equivalent generalized eigenvalue problem, which can be solved using standard techniques such as the QZ algorithm \cite{PEPbook}.
One of the preferred approaches to develop structured numerical methods for computing the eigenvalues of structured matrix polynomials is devising structured linearizations \cite{GoodVibrations}.
In practice, the most frequently used linearization to solve a polynomial eigenvalue problem is the Frobenius companion form \cite{Fiedler,Frobenius}.
However, Frobenius companion forms do no share in general the structure that $P(\lambda)$ might posses. 
Hence, finding linearizations that retain whatever structure the matrix polynomial might possess is a fundamental problem in the theory of linearizations 
\cite{Greeks2,FPR1,FiedlerHermitian,FPR2,FPR3,VectorSpaces,
PalindromicFiedler,symmetric,ChebyshevPencils,GoodVibrations,ChebyFiedler,NNT}.

There are two main sources of structure-preserving linearizations in the literature. 
The first source is based on pencils belonging to the vector space $\mathbb{DL}(P)$, introduced in \cite{DL} and further analyzed in \cite{BackErrors,symmetric,Conditioning,GoodVibrations,NNT}. 
The pencils in this vector space are easily constructible from the matrix coefficients of $P(\lambda)$, and most of them are strong linearizations when $P(\lambda)$ is regular.
However, none of these pencils is a strong linearization when $P(\lambda)$ is singular \cite{singular}. 
The second source is based on Fiedler pencils \cite{Greeks,Fiedler} and its different generalizations \cite{FPR1,FPR2,FPR3,VectorSpaces,PalindromicFiedler}. 
Using the Fielder pencils approach, structure-preserving strong  linearizations for matrix polynomials of odd degree (regardless of whether the matrix polynomial is regular or singular) have been constructed, as well as structure-preserving strong  linearizations for regular even-degree matrix polynomials under some nonsingularity conditions (in particular, nonsingular leading and/or trailing coefficients).
Fiedler pencils are easy to construct from the coefficients of the matrix polynomial, they are always strong linearizations regardless the matrix polynomial is regular or singular, and the eigenvectors, the minimal indices, and the minimal bases of any Fiedler pencil and those of the matrix polynomial are related in simple ways.
However, proving all these results requires considerable effort (for the different generalizations of Fiedler pencils the proofs become much more involved).

To overcome most of the difficulties and drawbacks with the $\mathbb{DL}(P)$ and Fiedler approaches, we present a new approach to the problem of constructing structure-preserving linearizations.
To this aim, we will first identify two subfamilies of the recently introduced family of \emph{(strong) block minimal bases pencils} \cite{Fiedler-like}, namely, \emph{structurable block Kronecker pencils} and \emph{modified structurable block Kronecker pencils}, which, as we will show in a series of three papers \cite{PartII,PartIII}, will provide a  fertile source of structure-preserving strong linearizations of structured matrix polynomials\footnote{This approach has been outlined in the recent reference \cite{Leo2016} for matrix polynomials with odd degree, where the authors construct one structure-preserving linearization for each of the structure classes mentioned at the beginning of this introduction.}.
However, the main goal of this and the forthcoming papers \cite{PartII,PartIII} is not only to provide new structure-preserving strong linearizations (nowadays plenty of them can be found in the literature), but to provide a much simpler framework for structure-preserving linearizations with the very important properties:
\begin{itemize}
\item[\rm (i)] they are easily constructable from the coefficients of the matrix polynomials;
\item[\rm (ii)]  eigenvector of regular matrix polynomials are easily recovered from those of the linearizations;
\item[\rm (iii)]  minimal bases of singular matrix polynomials are easily recovered from those of the linearizations;
\item[\rm (iv)]  there exists a simple relation between the minimal indices of singular matrix polynomials and the minimal indices of the linearizations, and such relation is robust under perturbations;
\item[\rm (v)] guarantee global backward stability of  polynomial eigenvalue problems solved via linearizations;
\item[\rm (vi)] they present one-sided factorizations (as those used in \cite{Framework}), which are useful for performing residual local (i.e., for each particular computed eigenpair) backward error and eigenvalue conditioning  analyses of regular polynomial eigenvaule problems solved by linearizations \cite{BackErrors,Conditioning}.
\end{itemize}
All these properties  are inherited from the properties of the family of (strong) minimal bases pencils, which have been proven in a clean, simple, and general way in \cite{Fiedler-like}.
Additionally, we expect this set of structure-preserving linearizations to include (maybe modulo row/column permutations and sign changes) most of the structure-preserving linearizations based on Fiedler pencils \cite{Greeks,Greeks2,FPR1,FPR2,FPR3,VectorSpaces,Curlett,PalindromicFiedler,Greeks3},  so these works provide a simplifying approach to Fiedler pencils theory.

The focus of this work is on symmetric and skew-symmetric linearizations of, respectively, symmetric and skew-symmetric matrix polynomials.
In words, given a symmetric or skew-symmetric matrix polynomial, we introduce a new approach to construct  pencils
\[
\mathcal{L}(\lambda)=\lambda \mathcal{L}_1+\mathcal{L}_0 \quad \mbox{with} \quad \mathcal{L}_1^T=\sigma \mathcal{L}_1\,\, \mbox{and}\,\, \mathcal{L}_0^T=\sigma \mathcal{L}_0,
\] 
that are strong linearizations  of $P(\lambda)$, where $\sigma=1$ corresponds to the symmetric case and $\sigma=-1$  to the skew-symmetric case. 
These strong linearizations will be obtained from our first examples of (modified) structurable block Kronecker pencils, namely, \emph{(modified) symmetrizable and skew-symmetrizable block Kronecker pencils}.
We will also show that many examples of symmetric and skew-symmetric linearizations in the literature are (modulo permutations) included in these sets of block Kronecker pencils.
In particular, this is shown to be true for the famous block-tridiagonal symmetric and skew-symmetric companion forms in \cite{Greeks,Greeks2,Skew}.
The palindromic and anti-palindromic cases will be considered in the second part  of this series of papers \cite{PartII}, while the alternating cases will be considered in the third part \cite{PartIII}.

The rest of the paper is organized as follows.
In Section \ref{sec:basic} we set the notation and review the basic definitions and required results used in the paper.
In  Section \ref{sec:pencils}, we revisit the recently introduced family of dual minimal bases pencils.
Then, in Sections \ref{sec:Kronecker_pencils_odd} and \ref{sec:Kronecker_pencils_even} we identify two subsets of the dual minimal bases pencil family, namely, symmetric block Kronecker pencils and skew-symmetric block Kronecker pencils.
These two sets will be used to construct (i) structure-preserving linearizations of symmetric or skew-symmetric odd-degree matrix polynomials that are strong regardless of whether the matrix polynomial is regular or singular, and (ii) structure-preserving linearizations for regular even-degree symmetric or skew-symmetric matrix polynomials  provided that their leading or trailing coefficients are nonsingular.
In Section \ref{sec:Hermitian} we show how to adapt our techniques to construct structure-preserving linearizations for Hermitian or skew-Hermitian matrix polynomials.
In Section \ref{sec:recovery}, we show how to recover eigenvectors, minimal bases, and minimal indices of a matrix polynomial from those of any of its linearizations obtained from (modified) symmetric or skew-symmetric block Kronecker pencils.
Finally, conclusions and future work are presented in Section \ref{sec:conclusions}.

\section{Auxiliary results, definitions, and notation}\label{sec:basic}

Throughout the paper we use the following notation.
We denote by $I_\ell$ the $\ell\times \ell$ identity matrix, and by $0$ we denote the zero matrix, whose size should be clear from the context.

Consider a $np\times np$ matrix $A$ partitioned into $p\times p$ blocks of size $n\times n$, and  denote by $A_{ij}$ the $(i,j)$-block-entries of $A$.
Then, we define the \emph{sum of the block entries} of $A$, denoted by $\su(A)$, by
\[
\su(A):=\sum_{i,j=1}^p A_{ij},
\] 
which is an $n\times n$ matrix.
Additionally, given another $np\times np$ matrix $B$ partitioned into blocks $B_{ij}$ conformable with those of $A$, we define the \emph{block Hadamard product}, denoted by $A \odot B$, by
\[
\begin{bmatrix}
A_{11} & \cdots & A_{1p} \\
\vdots & \ddots & \vdots \\
A_{p1} & \cdots & A_{pp}
\end{bmatrix}
\odot
\begin{bmatrix}
B_{11} & \cdots & B_{1p} \\
\vdots & \ddots & \vdots \\
B_{p1} & \cdots & B_{pp}
\end{bmatrix} :=
\begin{bmatrix}
A_{11}B_{11} & \cdots & A_{1p}B_{1p} \\
\vdots & \ddots & \vdots \\
A_{p1}B_{p1} & \cdots & A_{pp}B_{pp}
\end{bmatrix}.
\]
Notice that for $n=1$ the block Hadamard product reduces to the standard Hadamard product of two matrices.

Given an arbitrary field $\mathbb{F}$, we denote by $\FF[\lambda]$ and $\FF(\lambda)$, respectively, the ring of polynomials and the field of rational functions with coefficients in $\FF$ in the variable $\lambda$.
The set of $m\times n$ matrices with entries in $\FF[\lambda]$ (resp. $\FF(\lambda)$) is denoted by $\mathbb{F}[\lambda]^{m\times n}$ (resp.  $\FF(\lambda)^{m\times n}$).
The algebraic closure of $\FF$ is denoted by $\overline{\FF}$.

A matrix $P(\lambda)\in \mathbb{F}[\lambda]^{m\times n}$ is called an \emph{$m\times n$ matrix polynomial}.
If $n=1$ we also refer to $P(\lambda)$ as a \emph{vector polynomial}.
 The matrix polynomial $P(\lambda)$ in \eqref{eq:poly} is said to have \emph{degree} d if $P_d\neq 0$ and $P_{d+j}=0$, for $j>0$.
The degree of a matrix polynomial $P(\lambda)$ is denoted by $\deg(P(\lambda))$.
A number $g\geq \deg(P(\lambda))$ is  called the grade of $P(\lambda)$ if $P(\lambda)$ is expressed as $P(\lambda)=\sum_{k=0}^g P_k\lambda_k$, with $P_{d+1},\hdots,P_g=0$.
Throughout this paper when the grade of $P(\lambda)$ is not explicitly stated, we consider its grade equal to its degree.
A matrix polynomial of degree 1 is called a \emph{matrix pencil}.
For $k\geq \deg(P(\lambda))$, the \emph{$k$-reversal  matrix polynomial} of $P(\lambda)$ is 
\[
\rev_k P(\lambda) := \lambda^k P(\lambda^{-1}).
\]  
When $k=\deg(P(\lambda))$, we sometimes write $\rev P(\lambda)$ instead of $\rev_d P(\lambda)$.
In Lemma \ref{lemma:rev} we state a very simple relation between eigenvalues and eigenvectors of a matrix polynomial and its reversal, where we regard 0 and
$\infty$ as reciprocals.
\begin{lemma}\label{lemma:rev}
Let $P(\lambda)=\lambda^k P_k\in\FF[\lambda]^{n\times n}$ be a regular matrix polynomial. 
Then  $(\lambda_0,v)$ is an eigenpair of $P(\lambda)$ if and only if $(1/\lambda_0,v)$ is an eigenpair of $\rev_d P(\lambda)$.
\end{lemma}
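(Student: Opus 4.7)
The plan is to prove the lemma directly from the definition of the reversal, splitting into the finite nonzero case and the exceptional cases $\lambda_0 \in \{0,\infty\}$ handled via the reciprocity convention.

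First I would record the key algebraic identity. By definition $\rev_d P(\mu) = \mu^d P(\mu^{-1})$ for $\mu\neq 0$, so for any nonzero scalar $\lambda_0\in\overline{\FF}$ one has
\[
\rev_d P(\lambda_0^{-1}) \, v \;=\; \lambda_0^{-d}\, P(\lambda_0)\, v.
\]
Since $\lambda_0^{-d}\neq 0$, the vector $v$ lies in $\ker P(\lambda_0)$ if and only if it lies in $\ker \rev_d P(\lambda_0^{-1})$. This already settles the equivalence for every $\lambda_0\in\overline{\FF}\setminus\{0\}$ (with the understanding that $v\neq 0$ on both sides, as is implicit in the definition of an eigenpair).

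Next I would deal with the boundary cases. Under the stated convention, $(0,v)$ being an eigenpair of $P(\lambda)$ means $P_0 v = 0$, while $(\infty, v)$ being an eigenpair of $\rev_d P(\lambda)$ means, again by the convention (applied to $\rev_d P$), that the constant coefficient of $\rev_d P(\lambda)$ annihilates $v$. But the constant coefficient of $\rev_d P(\lambda)=\sum_{k=0}^d P_k\lambda^{d-k}$ is precisely $P_d$, and similarly the leading coefficient of $\rev_d P$ is $P_0$. Thus $(0,v)$ is an eigenpair of $P$ iff $P_0 v = 0$ iff $v$ is annihilated by the leading coefficient of $\rev_d P$, i.e.\ $(\infty, v)$ is an eigenpair of $\rev_d P$; symmetrically, $(\infty,v)$ is an eigenpair of $P$ iff $(0,v)$ is an eigenpair of $\rev_d P$. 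Regularity of $P(\lambda)$ is used only to ensure that the notion of eigenpair is well-defined (so that eigenvectors are nonzero and the spectrum, including $\infty$, is discrete).

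The proof is essentially a one-line verification, so there is no real obstacle; the only delicate point to articulate cleanly is the convention at $0$ and $\infty$, which must be spelled out to make the equivalence formally symmetric. Once the identity $\rev_d P(\lambda_0^{-1}) = \lambda_0^{-d} P(\lambda_0)$ and the reversal's interchange of leading and trailing coefficients are recorded, both implications follow immediately.
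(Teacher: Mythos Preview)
Your proof is correct. The paper states this lemma without proof, treating it as an elementary observation, so there is no argument to compare against; your direct verification from the definition of the reversal is exactly what one would expect.

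One small wording slip to clean up: you initially write that ``$(\infty,v)$ being an eigenpair of $\rev_d P(\lambda)$ means \ldots\ that the \emph{constant} coefficient of $\rev_d P(\lambda)$ annihilates $v$.'' That is not right; by the convention, $(\infty,v)$ is an eigenpair of a polynomial $Q$ precisely when $(0,v)$ is an eigenpair of $\rev Q$, i.e.\ when the \emph{leading} coefficient of $Q$ annihilates $v$. You in fact use the correct statement in your concluding chain of equivalences (``$P_0 v = 0$ iff $v$ is annihilated by the leading coefficient of $\rev_d P$''), so the argument itself is fine --- just make the earlier sentence consistent with it.
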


The concept of block-transposition and block-symmetric matrix polynomials play a role in some of the developments in this work (for properties of the block-transposition operation see  \cite[Chapter 3]{MackeyThesis}).
\begin{definition}\label{def:block_transpose}
Let $H(\lambda)=[H_{ij}(\lambda)]$ be a block $r\times s$ matrix polynomial with $m\times n$ blocks $H_{ij}(\lambda)$.
The \emph{block-transpose} of $H(\lambda)$ is the $s\times r$ block matrix polynomial $H(\lambda)^{\mathcal{B}}$ with $m\times n$ blocks defined by $(H(\lambda)^{\mathcal{B}})_{ij}=H_{ji}(\lambda)$.
Additionally, we say that the matrix polynomial $H(\lambda)$ is \emph{block-symmetric} if $H(\lambda)^{\mathcal{B}}=H(\lambda)$.
\end{definition}

A matrix polynomial $P(\lambda)$ is said to be \emph{regular} if $P(\lambda)$ is square ($m = n$) and $\det P(\lambda)$ is not the identically zero polynomial. 
Otherwise, the matrix polynomial $P(\lambda)$ is said to be \emph{singular} (note that this includes all
rectangular matrix polynomials $m\neq n$).
For the eigenstructure of a matrix polynomial we will follow the same notation and definitions as in \cite[Definition 2.17]{IndexSum}.
We recall that the complete eigenstructure of a matrix polynomial is its finite and infinite elementary divisors, together with its left and right minimal indices.

When a matrix polynomial $P(\lambda)\in\FF[\lambda]^{m\times n}$ is singular, there may exist vectors polynomials $x(\lambda)\in\mathbb{F}(\lambda)^{n\times 1}$ and $y(\lambda)^T\in\mathbb{F}(\lambda)^{1\times m}$ such that $P(\lambda)x(\lambda)=y(\lambda)^TP(\lambda)=0$.
This motivates the following definition.
\begin{definition}
The \emph{left and right nullspaces} of a singular matrix polynomial $P(\lambda)\in\FF[\lambda]^{m\times n}$, denoted by $\mathcal{N}_l(P)$ and $\mathcal{N}_r(P)$, respectively, are the vector spaces
\begin{align*}
\mathcal{N}_l(P):=&\{y(\lambda)^T\in\mathbb{F}(\lambda)^{1\times m} \quad \mbox{such that} \quad y(\lambda)^TP(\lambda) = 0\},\\
\mathcal{N}_r(P):=&\{x(\lambda)\in\mathbb{F}(\lambda)^{n \times 1} \quad \mbox{such that} \quad P(\lambda)x(\lambda) = 0\}.
\end{align*} 
\end{definition}

It is not difficult to show that it is always possible to find bases for $\mathcal{N}_l(P)$ and $\mathcal{N}_r(P)$ consisting entirely of vector polynomials.
The \emph{order} of a vector polynomial basis is defined as the sum of the  degrees of its  vector polynomials  \cite[Definition 2]{Forney}.
Among all the possible polynomial bases of $\mathcal{N}_l(P)$ and $\mathcal{N}_r(P)$, we are interested in the ones with least order.
\begin{definition}{\rm \cite[Definition 3]{Forney}}
Let $\mathcal{V}$ be a subspace of $\mathbb{F}(\lambda)^{n\times 1}$.
A \emph{minimal basis} of $\mathcal{V}$ is any polynomial basis of $\mathcal{V}$ with least order among all polynomial bases. 
\end{definition}

Minimal bases of $\mathcal{N}_l(P)$ and $\mathcal{N}_r(P)$ are not unique, but the order list  of the degrees of the vector polynomials in any minimal basis of $\mathcal{N}_l(P)$ and $\mathcal{N}_r(P)$ is always the same.
This motivates the following definition (see \cite[Definition 4]{Forney}).
\begin{definition}
Let $P(\lambda)$ be an $m\times n$ singular matrix polynomial, and let $\{y_1(\lambda)^T,\hdots,y_q(\lambda)^T\}$ and $\{x_1(\lambda),\hdots,x_p(\lambda)\}$ be minimal bases of $\mathcal{N}_l(P)$ and $\mathcal{N}_r(P)$, respectively, ordered such that $\deg (y_1(\lambda))\leq \cdots \leq \deg (y_q(\lambda))$ and \break $\deg (x_1(\lambda))\leq  \cdots \leq \deg (x_p(\lambda))$.
Let $\mu_j=\deg (y_j(\lambda))$, for $j=1,2,\hdots,q$, and  $\epsilon_j = \deg (x_j(\lambda))$, for $j=1,2,\hdots,p$.
Then, $\mu_1\leq \cdots \leq \mu_q$ and $\epsilon_1\leq \cdots \leq \epsilon_p$ are, respectively, the \emph{left and right minimal indices} of $P(\lambda)$.
\end{definition}

To work in practice with minimal basis we introduce the following definitions, where by the \emph{$i$th row degree} of a matrix polynomial $Q(\lambda)$ we denote the degree of the $i$th row of $Q(\lambda)$ (see \cite[Definition 2.3]{DDMVzigzag}).

\begin{definition} \label{def:rowreduced}
Let $Q(\lambda)\in\mathbb{F}[\lambda]^{m\times n}$ be a matrix polynomial with row degrees $d_1,d_2,\hdots,d_m$. The {\em highest row degree coefficient matrix} of $Q(\lambda)$, denoted by $Q_h$, is the $m \times n$ constant matrix whose $j$th row is the coefficient of $\lambda^{d_j}$ in the $j$th row of $Q(\lambda)$, for $j=1,2,\hdots,m$. 
The matrix polynomial $Q(\lambda)$ is called {\em row reduced} if $Q_h$ has full row rank.
\end{definition}

The following theorem is a useful characterization of minimal bases.
This theorem was originally proved in \cite[Main Theorem-Part 2, p. 495]{Forney}, though the statement we present here can be found in  \cite[Theorem 2.14]{FFP2015}.
\begin{theorem}
\label{thm:minimal_basis}
The rows of a matrix polynomial $Q(\lambda)\in\mathbb{F}[\lambda]^{m\times n}$ are a minimal basis of the rational subspace they span if and only if $Q(\lambda_0) \in \overline{\mathbb{F}}^{m \times n}$ has full row rank for all $\lambda_0 \in \overline{\mathbb{F}}$ and $Q(\lambda)$ is row reduced.
\end{theorem}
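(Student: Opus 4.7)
My plan is to organize the proof around the \emph{predictable degree property} (PDP): if $Q(\lambda)\in\FF[\lambda]^{m\times n}$ is row reduced with row degrees $d_1,\ldots,d_m$, then for every polynomial row vector $p(\lambda)^T\in\FF[\lambda]^{1\times m}$,
\[
\deg\bigl(p(\lambda)^T Q(\lambda)\bigr)=\max_{1\le i\le m}\bigl(\deg p_i(\lambda)+d_i\bigr).
\]
This holds because the candidate leading-coefficient vector of $p^T Q$ equals $\widetilde p^{\,T} Q_h$, where $\widetilde p_i$ is the leading coefficient of $p_i$ if $\deg p_i+d_i$ attains the maximum and $0$ otherwise; full row rank of $Q_h$ then forbids cancellation. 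The two hypotheses of the theorem correspond, respectively, to the absence of ``finite zeros'' and ``infinite zeros'' of $Q$, and PDP will carry the weight in both directions of the equivalence.

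Sufficiency ($\Leftarrow$). Full row rank of $Q(\lambda_0)$ at a single point already makes the rows $\FF(\lambda)$-linearly independent, so every polynomial row $v(\lambda)^T$ in the row span has a unique rational expression $v^T=p^T Q$ with $p\in\FF(\lambda)^{1\times m}$. The no-finite-zeros hypothesis then forces $p$ to be polynomial: locally around each $\lambda_0\in\overline{\FF}$ the Smith form of $Q$ has only trivial invariant factors, so $Q(\lambda)$ admits a polynomial right inverse on a neighbourhood of $\lambda_0$, and $p^T$ inherits polynomiality there. Applying PDP to $v^T=p^T Q$ gives $\deg v\ge d_i$ whenever $p_i\ne 0$. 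For any competing polynomial basis $\{\widetilde q_j^{\,T}\}_{j=1}^m$, writing $\widetilde q_j^{\,T}=\widetilde p_j^{\,T}Q$ gives a unimodular change-of-basis matrix $\widetilde P=[\widetilde p_j^{\,T}]_j$; a Hall-type matching argument permutes its rows so that its highest-row-degree coefficient matrix is invertible, and PDP then yields $\sum_j \deg \widetilde q_j \ge \sum_i d_i$, which is precisely minimality of the order of $Q$.

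Necessity ($\Rightarrow$). I would argue by contradiction via explicit basis modifications that strictly decrease the total row-degree sum. If $Q_h$ is row-rank deficient, pick $\alpha\in\FF^m$ with $\alpha^T Q_h=0$, choose an index $k$ with $\alpha_k\ne 0$ attaining $\max\{d_i:\alpha_i\ne 0\}$, and replace row $k$ of $Q$ by $\sum_i \alpha_i\lambda^{d_k-d_i}Q_{i,:}$; this is a unimodular row operation, so the new matrix still spans the same row space, yet its degree sum is strictly smaller. If instead $\rank Q(\lambda_0)<m$ for some $\lambda_0\in\overline{\FF}$, there is a nonzero $\alpha\in\overline{\FF}^m$ with $\alpha^T Q(\lambda_0)=0$, so $(\lambda-\lambda_0)$ divides every entry of the polynomial row $\alpha^T Q(\lambda)$; after a unimodular row operation that places $\alpha^T Q$ in one row, that row can be divided by $(\lambda-\lambda_0)$, again contradicting minimality.

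The main obstacle I anticipate is the case $\lambda_0\in\overline{\FF}\setminus\FF$ in the necessity direction: the null vector $\alpha^T$ is not defined over $\FF$, so the ``divide by $\lambda-\lambda_0$'' step must be executed with the $\FF$-minimal polynomial $m_{\lambda_0}(\lambda)$ and the full Galois orbit of $\lambda_0$, in order to produce a genuinely $\FF$-rational degree-reducing modification. A secondary delicate point is the Hall-matching step for $\widetilde P$ in the sufficiency argument, where one must use the unimodularity of $\widetilde P$ to guarantee that at least one of the $m!$ signed products of leading coefficients along a permutation is nonzero, so that a legitimate row reordering makes PDP applicable.
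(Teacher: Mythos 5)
The paper gives no proof of this theorem; it cites Forney's original argument (reference \cite{Forney}, Main Theorem, Part 2) and the restatement in \cite[Theorem 2.14]{FFP2015}. So the relevant comparison is with that classical route, which is indeed centred on the predictable degree property, the absence of finite zeros (interpreted via Smith form or a polynomial right inverse), and a degree-reducing modification for the converse. Your overall architecture matches it.

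Two concrete issues need repair in the sufficiency half. First, the claim that $\widetilde P$ is unimodular is not true in general: having no finite zeros guarantees that the rows of $Q$ generate the full $\FF[\lambda]$-module of polynomial vectors in the rational row span, so $\widetilde Q=\widetilde P Q$ with $\widetilde P$ polynomial, but an arbitrary competing polynomial basis $\widetilde Q$ need not generate that whole module, so $\widetilde P^{-1}$ can fail to be polynomial. Second, the ``Hall-type matching'' step is both unnecessary and incorrectly aimed: permuting the rows of $\widetilde P$ cannot make a singular highest-row-degree coefficient matrix nonsingular, and you do not need that matrix to be invertible at all. The correct and much shorter step is: since $\{\widetilde q_j^T\}$ and the rows of $Q$ are both $\FF(\lambda)$-bases of the same subspace, $\det\widetilde P\neq 0$, hence by the Leibniz expansion there is a permutation $\sigma$ with $\widetilde p_{j,\sigma(j)}\neq 0$ for all $j$; then PDP gives $\deg\widetilde q_j=\max_i(\deg\widetilde p_{j,i}+d_i)\geq d_{\sigma(j)}$, and summing over $j$ yields $\sum_j\deg\widetilde q_j\geq\sum_i d_i$. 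That single observation replaces both the unimodularity claim and the matching argument.

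In the necessity half, the row-reducedness contradiction is fine (the transformation matrix you construct is triangular up to permutation with nonzero constant determinant $\alpha_k$, hence unimodular). For the finite-zero contradiction, the Galois-orbit issue you flag is a real gap you have not closed: the $(\lambda-\lambda_0)$-division argument, as written, only works when $\lambda_0\in\FF$. A clean, field-agnostic fix is to argue through the Smith normal form $Q=U\Sigma V$ over $\FF[\lambda]$: rank-deficiency of $Q(\lambda_0)$ for some $\lambda_0\in\overline{\FF}$ forces some invariant polynomial $\sigma_i$ to be nonconstant, and then the first $m$ rows of $V$ form a polynomial basis of the same rational row space which one shows has strictly smaller order than that of $Q$; that comparison does require some care (e.g., via the minor-degree characterization of order), so simply switching to the minimal polynomial $m_{\lambda_0}$, as you suggest, still leaves work to do to produce a genuinely $\FF$-rational degree-reducing row operation. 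Until that step is written out, the necessity direction is a sketch rather than a proof.
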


\begin{remark} {\rm Most of the minimal bases appearing in this work are arranged as the rows of a matrix. Therefore, throughout the paper, with a slight abuse of notation, we say that an $m\times n$ matrix polynomial (with $m < n$) is a minimal basis if its rows form a minimal basis of the rational subspace they span.}
\end{remark}

The concept of \emph{dual minimal bases} plays an important role in this paper and is introduced in Definition \ref{def:dualminimalbases}. 
\begin{definition}{\rm \cite[Definition 2.10]{DDMVzigzag}} \label{def:dualminimalbases}
Two matrix polynomials $L(\lambda)\in\FF[\lambda]^{m_1\times n}$ and $N(\lambda)\in\FF[\lambda]^{m_2\times n}$ are called \emph{dual minimal bases} if $L(\lambda)$ and $N(\lambda)$ are both minimal bases and they satisfy
$m_1+m_2 = n$ and $L(\lambda)N(\lambda)^T = 0$.
\end{definition}
\begin{remark}
We will sometime say ``$N(\lambda)$ is a minimal basis dual to $L(\lambda)$'', or vice versa, to refer to matrix polynomials $L(\lambda)$ and $N(\lambda)$ as those in Definition \ref{def:dualminimalbases}.
\end{remark}

The following examples  illustrates the concept of dual minimal bases (Example \ref{ex-L-Lamb} can be also found in \cite[Example 2.6]{Fiedler-like}).
The dual minimal bases in these examples will play a crucial role in the developments of this paper.
\begin{example} \label{ex-L-Lamb} {\rm  Consider the following matrix polynomials:
\begin{equation}
\label{eq:Lk}
L_k(\lambda):=\begin{bmatrix}
-1 & \lambda  \\
& -1 & \lambda \\
& & \ddots & \ddots \\
& & & -1 & \lambda  \\
\end{bmatrix}\in\mathbb{F}[\lambda]^{k\times(k+1)},
\end{equation}
and
\begin{equation}
  \label{eq:Lambda}
  \Lambda_k(\lambda)^T :=
\begin{bmatrix}
      \lambda^{k} & \cdots & \lambda & 1
\end{bmatrix} \in \FF[\lambda]^{1\times (k+1)},
\end{equation}
where here and throughout the paper we occasionally omit some, or all, of the zero entries of a matrix. 
Theorem \ref{thm:minimal_basis} guarantees that $L_k(\lambda)$ and $\Lambda_k(\lambda)^T$ are minimal bases. 
Additionally, $L_k(\lambda)\Lambda_k(\lambda)=0$ holds. Therefore, $L_k(\lambda)$ and $\Lambda_k(\lambda)^T$ are dual minimal bases. 
Also, from \cite[Corollary 2.4]{Fiedler-like} and the properties of the Kronecker product we get that $L_k(\lambda) \otimes I_p$ and $\Lambda_k(\lambda)^T \otimes I_p$ are also dual minimal bases.
}
\end{example}
\begin{example} \label{ex-L-Lamb2} {\rm  Consider the following matrix polynomials:
\begin{equation}\label{eq:Lk2}
\widehat{L}_k(\lambda):=
\begin{bmatrix}
0 & -1 & \lambda \\
0 & & -1 & \lambda \\
\vdots & & & \ddots & \ddots \\
0 & & & & -1 & \lambda
\end{bmatrix}\in\FF[\lambda]^{(k-1)\times (k+1)}
\end{equation}
and 
\begin{equation} \label{eq:Lambda2}
\widehat{\Lambda}_k(\lambda)^T :=
\begin{bmatrix}
1 & 0 & \cdots & 0 & 0 \\
0 & \lambda^{k-1} & \cdots & \lambda & 1
\end{bmatrix}\in\FF[\lambda]^{2\times (k+1)}.
\end{equation}
Theorem \ref{thm:minimal_basis} guarantees that $\widehat{L}_k(\lambda)$ and $\widehat{\Lambda}_k(\lambda)^T$ are minimal bases.
Since $\widehat{L}_k(\lambda)\widehat{\Lambda}_k(\lambda)=0$ holds, the matrix polynomials $\widehat{L}_k(\lambda)$ and $\widehat{\Lambda}_k(\lambda)^T$ are dual minimal bases.
Then, from \cite[Corollary 2.4]{Fiedler-like} and the properties of the Kronecker product we get that $\widehat{L}_k(\lambda) \otimes I_p$ and $\widehat{\Lambda}_k(\lambda)^T \otimes I_p$ are also dual minimal bases.
Additionally, notice that although $\rev \widehat{L}_k(\lambda)\rev \widehat{\Lambda}_k(\lambda)=0$ holds, the matrix polynomials $\rev \widehat{L}_k(\lambda)$ and $\rev \widehat{\Lambda}_k(\lambda)$ are not dual minimal bases (since $\rev \widehat{\Lambda}_k(\lambda)$ is not a minimal basis). 
However, we can easily find a dual minimal basis to $\rev \widehat{L}_k(\lambda)$.
Indeed, such a basis may be 
\begin{equation} \label{eq:Lambda3}
\widetilde{\Lambda}_k(\lambda)^T :=
\begin{bmatrix}
1 & 0 & 0 & \cdots & 0 \\
0 & 1 & \lambda & \cdots & \lambda^{k-1}
\end{bmatrix}\in\FF[\lambda]^{2\times (k+1)}.
\end{equation}
Also, from \cite[Corollary 2.4]{Fiedler-like} and the properties of the Kronecker product, we get that $\rev \widehat{\Lambda}_k(\lambda)\otimes I_p$ and $\widetilde{\Lambda}_k(\lambda)^T\otimes I_p$ are dual minimal bases.
}
\end{example}

\medskip

An $m\times m$ matrix polynomial $U(\lambda)$ is said to be \emph{unimodular} if $\det U(\lambda)$ is a nonzero constant.
Two matrix polynomials $P(\lambda)$ and $Q(\lambda)$ are said to be \emph{strictly equivalent} if there exist nonsingular constant matrices $E$ and $F$ such that $EP(\lambda)F=Q(\lambda)$
In addition, the matrix polynomials $P(\lambda)$ and $Q(\lambda)$ are said to be \emph{unimodularly equivalent} if there exist unimodular matrix polynomials $U(\lambda)$ and $V(\lambda)$ such that $U(\lambda)P(\lambda)V(\lambda) = Q(\lambda)$, or \emph{extended unimodularly equivalent}  if  $U(\lambda)\diag(I_s,P(\lambda))V(\lambda)=\diag(I_t,Q(\lambda))$, for some natural numbers $s,t$ (see \cite[Definition 3.2]{IndexSum}).
We recall that strict equivalence preserve  size, degree, and all the spectral structure --finite and infinite-- and all the singular structure of matrix polynomials.
By contrast (extended) unimodular equivalence preserves only (size), dimensions of the left and right null spaces and the finite spectral structure of matrix polynomials.

Finally, we recall the definition of (strong) linearization and (structured) companion form. 
For  linearizations, we will follow the definition in \cite[Definition 3.3(a)]{IndexSum}, which is based on the concept of spectral equivalent matrix polynomials \cite[Definition 3.2(b)]{IndexSum}.
\begin{definition}\label{def:linearization}
A \emph{linearization} of a matrix polynomial $P(\lambda)$  is a pencil $L(\lambda) = \lambda B + A$ such that there exist two unimodular matrix polynomials $U(\lambda)$ and $V(\lambda)$ satisfying
\begin{equation*}
U(\lambda)L(\lambda)V(\lambda) = 
\begin{bmatrix}
I_s & 0 \\ 0 & P(\lambda)
\end{bmatrix},
\end{equation*}
for some natural number $s$.
In addition, the pencil $L(\lambda)$ is said to be a \emph{strong linearization} if $\rev L(\lambda) = \lambda A + B$ is a linearization of $\rev P(\lambda)$.
\end{definition}
We recall that the key property of any strong linearization $L(\lambda)$ of $P(\lambda)$ is that $L(\lambda)$ preserves the finite and infinite eigenstructure of $P(\lambda)$ as well as the dimensions of the right and left null spaces of $P(\lambda)$ (see, for example, \cite{IndexSum}).
On the other hand, it is well known that linearizations may change right and left minimal indices arbitrarily \cite[Theorem 4.11]{IndexSum}.

In numerical applications, it is very important to be able to construct linearizations without performing any arithmetic operation, which may introduce errors that do not exist  in the original problem.
For this reason, in this work not only we are interested in strong linearizations of matrix polynomials, but also in \emph{companion forms} and \emph{structured companion forms} for matrix polynomials.
\begin{definition}{\rm (\cite[Definition 5.1]{IndexSum} and \cite[Definition 7]{Mackey-Fiedler})}\label{def:companion}
A {\rm companion form} for degree-$d$ matrix polynomials is a uniform template for building a pencil $C_P$ from the data of any matrix polynomial $P(\lambda)$ of degree $d$, using no matrix operations on the coefficients of $P(\lambda)$. $C_P$ should be a strong linearization for every  $P(\lambda)$ of degree $d$, regular or singular, over an arbitrary field $\mathbb{F}$.
Additionally, we say that $C_P$ is a {\rm structured companion form} for structure class $\mathcal{S}$ if it is a companion form with the additional property that $C_P\in\mathcal{S}$ whenever $P(\lambda)\in\mathcal{S}$.
\end{definition}
\begin{remark}
Notice that Definition \ref{def:companion} implies that if a strong linearization $\mathcal{L}(\lambda)$ of a matrix polynomial $P(\lambda)=\sum_{k=0}^d P_k\lambda^k$ is constructed using block entries of the form
\begin{equation}\label{eq:M_block_entry}
\lambda B_{ij}+A_{ij} = \lambda \left(\beta_{ij}P_\ell\right) + \alpha_{ij}P_t,
\end{equation}
for some constants $\alpha_{ij},\beta_{ij}$ and natural numbers $\ell,t$, then $\mathcal{L}(\lambda)$ is a companion form for degree-$d$ matrix polynomials..
\end{remark}

\section{Strong minimal bases pencils}\label{sec:pencils}

In this section we revisit the  family of strong minimal bases pencils  recently introduced in \cite{Fiedler-like}.
\begin{definition}{\rm \cite[Definition 3.1]{Fiedler-like}} \label{def:minlinearizations} A matrix pencil
\begin{equation} \label{eq:minbaspencil}
\mathcal{L}(\lambda) = \begin{bmatrix} M(\lambda) & K_2 (\lambda)^T \\ K_1 (\lambda) & 0\end{bmatrix}
\end{equation}
is called a {\em block minimal bases pencil} if $K_1 (\lambda)$ and $K_2(\lambda)$ are both minimal bases.
If, in addition, the row degrees of $K_1 (\lambda)$ are all equal to $1$, the row degrees of $K_2 (\lambda)$ are all equal to $1$, the row degrees of a minimal basis dual to $K_1 (\lambda)$ are all equal, and the row degrees of a minimal basis dual to $K_2 (\lambda)$ are all equal, then $\mathcal{L}(\lambda)$ is called a {\em strong block minimal bases pencil}.
\end{definition}

A surprising property of any (strong) block minimal bases pencil is that it is a (strong) linearization of a certain matrix polynomial expressed in terms of the pencil $\lambda B+A$ and any dual minimal bases of $K_1(\lambda)$ and $K_2(\lambda)$.
More precisely, we have the following theorem.

\begin{theorem}{\rm \cite[Theorem 3.3]{Fiedler-like}}\label{thm:blockminlin}  Let $K_1 (\lambda)$ and $N_1 (\lambda)$ be a pair of dual minimal bases, and let $K_2 (\lambda)$ and $N_2 (\lambda)$ be another pair of dual minimal bases. 
Consider the matrix polynomial
\begin{equation} \label{eq:Qpolinminbaslin}
P(\lambda) := N_2(\lambda) M(\lambda) N_1(\lambda)^T,
\end{equation}
and the block minimal bases pencil $\mathcal{L}(\lambda)$ in \eqref{eq:minbaspencil}. 
Then:
\begin{enumerate}
\item[\rm (a)] $\mathcal{L}(\lambda)$ is a linearization of $P(\lambda)$.
\item[\rm (b)] If $\mathcal{L}(\lambda)$ is a strong block minimal bases pencil, then $\mathcal{L}(\lambda)$ is a strong linearization of $P(\lambda)$, considered as a polynomial with grade $1 + \deg(N_1 (\lambda)) + \deg(N_2 (\lambda))$.
\end{enumerate}
\end{theorem}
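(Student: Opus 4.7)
The plan is to prove part (a) by explicitly reducing $\mathcal{L}(\lambda)$ to $\diag(I_s,P(\lambda))$ through a sequence of unimodular transformations, and then to deduce part (b) by applying (a) to the reversed pencil $\rev\mathcal{L}(\lambda)$, after checking that the strong hypothesis forces the reversals of $K_i(\lambda)$ and $N_i(\lambda)$ to remain dual minimal bases.

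The key enabling ingredient for (a) is a standard property of dual minimal bases: each pair $(K_i(\lambda),N_i(\lambda))$ can be completed to mutually inverse unimodular matrices. That is, there exist polynomial matrices $\widehat{K}_i(\lambda)$ and $\widehat{N}_i(\lambda)$ such that $[\widehat{K}_i(\lambda)\ N_i(\lambda)^T]$ is unimodular with inverse $\bigl[\begin{smallmatrix}K_i(\lambda)\\ \widehat{N}_i(\lambda)\end{smallmatrix}\bigr]$, yielding the identities $K_i\widehat{K}_i=I$, $K_iN_i^T=0$, $\widehat{N}_i\widehat{K}_i=0$, and $\widehat{N}_iN_i^T=I$.

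With these completions in hand, the reduction proceeds in stages. First, right-multiply $\mathcal{L}(\lambda)$ by the unimodular block-diagonal matrix $\diag([\widehat{K}_1,N_1^T],I)$; the lower-left block $K_1$ becomes $[I\ 0]$ and the top block $M$ is split horizontally into $[M\widehat{K}_1\ MN_1^T]$. Using the resulting identity block, perform elementary block row operations to zero out $M\widehat{K}_1$. Next, left-multiply by the unimodular block-diagonal matrix $\diag\bigl(\bigl[\begin{smallmatrix}\widehat{K}_2^T\\ N_2\end{smallmatrix}\bigr],I\bigr)$, reducing the upper-right block $K_2^T$ to $\bigl[\begin{smallmatrix}I\\0\end{smallmatrix}\bigr]$ and splitting $MN_1^T$ vertically into $\bigl[\begin{smallmatrix}\widehat{K}_2^TMN_1^T\\ N_2MN_1^T\end{smallmatrix}\bigr]$. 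The new identity block then permits elementary block column operations that annihilate $\widehat{K}_2^TMN_1^T$, after which $P(\lambda)=N_2MN_1^T$ sits alone in a central block alongside two identity blocks; a final block permutation produces $\diag(I_s,P(\lambda))$ with $s=m_1+m_2$, proving (a).

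For part (b), the idea is to apply (a) to $\rev\mathcal{L}(\lambda)$, which is itself a block minimal bases pencil with central block $\rev_1 M(\lambda)$ and antidiagonal blocks $\rev_1 K_1(\lambda)$ and $\rev_1 K_2(\lambda)^T$. Its associated matrix polynomial is $\rev N_2(\lambda)\cdot\rev_1 M(\lambda)\cdot\rev N_1(\lambda)^T$, which equals $\rev_g P(\lambda)$ for $g=1+\deg N_1+\deg N_2$ via the identity $\rev_{a+b+c}(ABC)=\rev_a A\cdot\rev_b B\cdot\rev_c C$. The main technical obstacle is verifying that $\rev_1 K_i(\lambda)$ and $\rev N_i(\lambda)$ remain dual minimal bases. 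This is where the strong hypothesis enters: when all row degrees of $K_i$ are equal to $1$ and all row degrees of the dual basis $N_i$ are equal, reversal acts as a uniform reversal of coefficients in every row, which preserves both the row-reducedness condition and the full-row-rank-at-every-$\lambda_0$ condition of Theorem \ref{thm:minimal_basis}; duality $\rev_1 K_i\cdot\rev N_i^T=0$ follows from the reversal identity applied to the matrix product $K_iN_i^T=0$. Once this is checked, applying (a) to $\rev\mathcal{L}(\lambda)$ identifies it as a linearization of $\rev_g P(\lambda)$, so $\mathcal{L}(\lambda)$ is a strong linearization of $P(\lambda)$ considered at grade $g$.
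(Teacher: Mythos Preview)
The paper does not give its own proof of this theorem: it is quoted verbatim as \cite[Theorem 3.3]{Fiedler-like} and used as a black box throughout. So there is no in-paper argument to compare against. That said, your proposal is correct and is essentially the proof that appears in the cited source \cite{Fiedler-like}: the unimodular completion of each dual minimal basis pair $(K_i,N_i)$ to mutually inverse square polynomial matrices, followed by block elimination, is exactly the mechanism used there for part (a); and applying part (a) to $\rev\mathcal{L}(\lambda)$, after verifying that the equal-row-degree hypotheses make $\rev_1 K_i$ and $\rev N_i$ again dual minimal bases, is precisely how part (b) is handled.

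One small point worth tightening in your part (b) sketch: the claim that reversal ``preserves the full-row-rank-at-every-$\lambda_0$ condition'' deserves a line of justification at $\lambda_0=0$, since $\rev_1 K_i(0)$ is the leading coefficient $K_{i,1}$ of $K_i$, not a value of $K_i$ itself. The row-reducedness of $K_i$ (all row degrees equal to $1$) is exactly what guarantees $K_{i,1}$ has full row rank, and symmetrically $K_i(0)=K_{i,0}$ having full row rank is what makes $\rev_1 K_i$ row reduced. The same swap of roles occurs for $N_i$. Once this is made explicit, the argument is complete.
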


From Theorem \ref{def:minlinearizations} we see that given a matrix polynomial $P(\lambda)$ and fixed minimal bases $N_1(\lambda)$ and $N_2(\lambda)$, one can obtain strong linearizations via the pencil \eqref{eq:minbaspencil} provided that the equation \eqref{eq:Qpolinminbaslin} is solved.
This equation can be viewed as a linear equation for the unknown pencil $M(\lambda)$, and it is always consistent as a consequence of the properties of the minimal bases $N_1 (\lambda)$ and $N_2 (\lambda)$ \cite{Fiedler-like}. 
However, despite its consistency, the equation \eqref{eq:Qpolinminbaslin} may be very difficult to solve for arbitrary minimal bases $N_1 (\lambda)$ and $N_2 (\lambda)$.
In Sections \ref{sec:Kronecker_pencils_odd} and \ref{sec:Kronecker_pencils_even} we will see that for certain particular choices of the dual minimal bases $K_1(\lambda),N_1(\lambda)$ and $K_2(\lambda),N_2(\lambda)$ it is, first, easy to obtain solutions $M(\lambda)$ of \eqref{eq:Qpolinminbaslin} and, second, to construct a wide class of linearizations easily constructible from the coefficients of $P(\lambda)$ that are symmetric (resp. skew symmetric) whenever the matrix polynomial is symmetric (resp. skew symmetric). 

We end this section with Lemma \ref{lemma:aux}, which relates eigenvectors, minimal bases and minimal indices of a strong minimal bases pencil \eqref{eq:minbaspencil} with those of the matrix polynomial \eqref{eq:Qpolinminbaslin}.
The results in Lemma \ref{lemma:aux} are immediate consequences of  \cite[Theorem 3.7]{Fiedler-like}, \cite[Lemmas 7.1 and 7.2]{Fiedler-like} and \cite[Remark 7.3]{Fiedler-like}, and will be important in Section \ref{sec:recovery_odd}.
\begin{lemma}\label{lemma:aux}
Let $\mathcal{L}(\lambda)$ be a strong block minimal bases pencil as in \eqref{eq:minbaspencil}, let $N_1(\lambda)$ be a minimal basis dual to $K_1(\lambda)$, and let $P(\lambda)$ be the matrix polynomial defined in \eqref{eq:Qpolinminbaslin}.
Then, the following statements hold.
\begin{itemize}
\item[\rm (a)] Assume that $\mathcal{L}(\lambda)$ is square and regular.
Any right eigenvector of $\mathcal{L}(\lambda)$ with eigenvalue $\lambda_0$ has the form
\[
z=\begin{bmatrix}
N_1(\lambda_0)^T \\ *
\end{bmatrix}x,
\] 
for some right eigenvector $x$ of $P(\lambda)$ with eigenvalue $\lambda_0$,  where by ``$*$'' we denote some matrix polynomial on $\lambda_0$.
\item[\rm (b)] Assume that $\mathcal{L}(\lambda)$ is singular.
\begin{itemize}
\item[\rm (b1)] Any right minimal basis of $\mathcal{L}(\lambda)$ has the form
\[
\left\{
\begin{bmatrix}
N_1(\lambda)^T \\ *
\end{bmatrix}h_1(\lambda),\hdots, \begin{bmatrix}
N_1(\lambda)^T \\ *
\end{bmatrix}h_p(\lambda)
\right\},
\]
where $\{h_1(\lambda),\hdots,h_p(\lambda)\}$ is some right minimal basis of $P(\lambda)$, and where by ``$*$'' we denote some matrix polynomial.
\item[\rm (b2)] If $\epsilon_1\leq \cdots \leq \epsilon_p$ are the right minimal indices of $P(\lambda)$,  then
\[
\epsilon_1+\deg(N_1(\lambda)) \leq \cdots \leq \epsilon_p+\deg(N_1(\lambda)),
\]
are the right minimal indices of $\mathcal{L}(\lambda)$.
\end{itemize}
\end{itemize}
\end{lemma}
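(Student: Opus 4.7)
The plan is to observe that all three parts follow directly from the cited results of \cite{Fiedler-like} once the setup is matched to the theorems there: $\mathcal{L}(\lambda)$ is a strong linearization of $P(\lambda)$ by Theorem \ref{thm:blockminlin}(b), and the structural results on eigenvectors and minimal bases of strong block minimal bases pencils in \cite[Theorem 3.7]{Fiedler-like} and \cite[Lemmas 7.1 and 7.2]{Fiedler-like} then describe precisely the transfer maps between the spectral and singular data of $\mathcal{L}(\lambda)$ and $P(\lambda)$. The whole proof should therefore be a matter of unpacking these results; I will sketch, for the reader's benefit, the underlying mechanism rather than re-derive the cited theorems.

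For part (a), I would partition a right eigenvector $z$ of $\mathcal{L}(\lambda)$ at $\lambda_0$ conformally with \eqref{eq:minbaspencil} as $z=\begin{bmatrix}v \\ w\end{bmatrix}$. Then $\mathcal{L}(\lambda_0)z=0$ decouples into $K_1(\lambda_0)v=0$ and $M(\lambda_0)v+K_2(\lambda_0)^Tw=0$. Because $K_1(\lambda)$ is a minimal basis by hypothesis, $K_1(\lambda_0)$ has full row rank by Theorem \ref{thm:minimal_basis}, and the duality with $N_1(\lambda)$ forces $v=N_1(\lambda_0)^Tx$ for some vector $x$. Multiplying the first block equation on the left by $N_2(\lambda_0)$ and invoking $N_2(\lambda)K_2(\lambda)^T=0$ yields $N_2(\lambda_0)M(\lambda_0)N_1(\lambda_0)^Tx = P(\lambda_0)x = 0$, so $x$ is a right eigenvector of $P(\lambda)$ at $\lambda_0$, and $z$ has the claimed form.

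For part (b1), I would repeat the same block-partition argument at the level of vector polynomials: any $z(\lambda)\in\mathcal{N}_r(\mathcal{L})$ has bottom block forced into the form $v(\lambda)=N_1(\lambda)^Th(\lambda)$ with $h(\lambda)\in\mathcal{N}_r(P)$, which gives a bijection between $\mathcal{N}_r(\mathcal{L})$ and $\mathcal{N}_r(P)$. The content of \cite[Lemmas 7.1 and 7.2]{Fiedler-like} is that this correspondence sends minimal bases to minimal bases, yielding the explicit description in (b1). For (b2), the degree of each basis vector of $\mathcal{N}_r(\mathcal{L})$ is the degree of the product $N_1(\lambda)^Th_j(\lambda)$; since $\mathcal{L}(\lambda)$ is a \emph{strong} block minimal bases pencil, the row degrees of $N_1(\lambda)$ are all equal to $\deg(N_1(\lambda))$, so the degree of the product is exactly $\epsilon_j+\deg(N_1(\lambda))$, which gives the shift formula.

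The main obstacle is really only notational: once the duality $K_i(\lambda)N_i(\lambda)^T=0$ is applied together with full row rank of $K_1(\lambda_0)$ and the uniform-row-degree property of $N_1(\lambda)$, everything is forced. The non-trivial step — that the constructed family is actually a \emph{minimal} basis and not merely a basis — is what \cite[Lemmas 7.1 and 7.2]{Fiedler-like} are designed to handle, and I would simply invoke them. The strongness hypothesis cannot be dropped: without equal row degrees of $N_1(\lambda)$, the degrees of $N_1(\lambda)^Th_j(\lambda)$ would not be given by a clean shift, and (b2) would fail.
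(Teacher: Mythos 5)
Your proposal matches the paper's own (non-)proof exactly: the paper simply asserts that Lemma~\ref{lemma:aux} is an immediate consequence of \cite[Theorem 3.7]{Fiedler-like}, \cite[Lemmas 7.1 and 7.2]{Fiedler-like}, and \cite[Remark 7.3]{Fiedler-like} without giving any further argument, and you invoke precisely those same results. The mechanism you sketch (decomposing $z$ conformally, using full row rank of $K_1(\lambda_0)$ together with the duality $K_1 N_1^T = 0$ to force $v = N_1(\lambda_0)^T x$, then left-multiplying by $N_2(\lambda_0)$ to land on $P(\lambda_0)x = 0$, and finally using the uniform row degrees of $N_1$ for the degree shift) is correct and is in fact more explanatory than what the paper records, with the only nit being that the nontrivial points you defer — that the ``$*$'' block can be taken polynomial without raising the degree and that minimality is preserved — are exactly the content of the cited lemmas, as you yourself note.
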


\section{Symmetrizable and skew-symmetrizable block Kronecker pencils associated with odd-grade matrix polynomials}\label{sec:Kronecker_pencils_odd}

We begin this section identifying two subfamilies of strong minimal bases pencils that we have named \emph{symmetrizable block Kronecker pencils} and \emph{skew-symmetrizable block Kronecker pencils}.
\begin{definition} \label{def:blockKronlin} Let $L_k (\lambda)$ be the matrix pencil defined in \eqref{eq:Lk}, let $s$ and $n$ be  nonzero natural numbers, let $\sigma\in\{1,-1\}$, and let $\lambda B + A$ be an arbitrary pencil of size $(s+1)n\times (s+1)n$. 
Then any matrix pencil of the form
\begin{equation}
  \label{eq:linearization_general}
  \mathcal{L}(\lambda)=
  \left[
    \begin{array}{c|c}
      \lambda B+A&L_{s}(\lambda)^{T}\otimes I_{n}\\\hline
      \sigma L_{s}(\lambda)\otimes I_{n}&0
      \end{array}
    \right]   \end{equation}
is called a {\rm symmetrizable block Kronecker pencil} if $\sigma=1$, or a {\rm skew-symmetrizable block Kronecker pencil} if $\sigma=-1$. 
Additionally, for simplicity or when the scalar $\sigma$ is not specified, the pencil \eqref{eq:linearization_general} is called a \emph{block Kronecker pencil}.
\end{definition}
\begin{remark}{\rm
In \cite[Definition 5.1]{Fiedler-like} a $(\epsilon,n,\eta,m)$-block Kronecker pencil is defined as a pencil of the form
\[
  \left[
    \begin{array}{c|c}
      \lambda B+A&L_{\eta}(\lambda)^{T}\otimes I_{m}\\\hline
       L_{\epsilon}(\lambda)\otimes I_{n}&0
      \end{array}\right].
\]
So notice that symmetrizable block Kronecker pencils are  particular examples of block Kronecker pencils. 
More precisely, symmetrizable block Kronecker pencils are $(s,n,s,n)$-block Kronecker pencils.
On the other hand, skew-symmetrizable block Kronecker pencils may be obtained by multiplying $(s,n,s,n)$-block Kronecker pencils on the left  by the matrix $E=\diag(I_{(s+1)n},-I_{sn})$, i.e., skew-symmetrizable block Kronecker pencils are strictly equivalent to $(s,n,s,n)$-block Kronecker pencils. 
}
\end{remark}
\begin{remark} {\rm The motivation for the names ``symmetrizable  block Kronecker pencil'' and ``skew-symmetrizable  block Kronecker pencil'' is  the fact that the anti-diagonal blocks of  \eqref{eq:linearization_general} are (up to a sign in the skew-symmetric case) Kronecker products of singular blocks of the Kronecker canonical form of pencils  times identity matrices. Moreover, as we will show in Sections \ref{sec:sym_odd} and \ref{sec:skew_odd}, those families contain infinite symmetric and skew-symmetric strong linearizations for, respectively, symmetric and skew-symmetric matrix polynomials with odd degrees.
}
\end{remark}

From Example \ref{ex-L-Lamb}, it is clear that $\sigma L_k(\lambda)\otimes I_n$ and $\Lambda_k(\lambda)\otimes I_n$ (with $\sigma\in\{1,-1\}$) are a pair of dual minimal bases. 
Therefore, we obtain the following result for symmetrizable or skew-symmetrizable block Kronecker pencils as an immediate corollary of Theorem \ref{thm:blockminlin}.
\begin{theorem}\label{thm:strong}
Let $\mathcal{L}(\lambda)$ be the pencil in \eqref{eq:linearization_general}, and let $\Lambda_k(\lambda)$ be the matrix polynomial in \eqref{eq:Lambda}.
Then $\mathcal{L}(\lambda)$ is a strong linearization of the  matrix polynomial
\begin{equation}\label{eq:condition}
P(\lambda) = (\Lambda_s(\lambda)^T\otimes I_n)(\lambda B+A)(\Lambda_{s}(\lambda)\otimes I_n) \in \mathbb{F}[\lambda]^{n\times n}
\end{equation}
of  grade $2s+1$.
\end{theorem}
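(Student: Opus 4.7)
The plan is to obtain the theorem as a direct corollary of Theorem \ref{thm:blockminlin}, so essentially all the work is to verify that the pencil $\mathcal{L}(\lambda)$ in \eqref{eq:linearization_general} fits the strong block minimal bases template of Definition \ref{def:minlinearizations}, and then to identify the two pairs of dual minimal bases that make equation \eqref{eq:Qpolinminbaslin} collapse to \eqref{eq:condition}.

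First I would match \eqref{eq:linearization_general} to \eqref{eq:minbaspencil} by setting
\[
M(\lambda):=\lambda B+A,\qquad K_1(\lambda):=\sigma L_s(\lambda)\otimes I_n,\qquad K_2(\lambda):=L_s(\lambda)\otimes I_n.
\]
By Example \ref{ex-L-Lamb}, $L_s(\lambda)\otimes I_n$ is a minimal basis whose rows all have degree $1$ and whose dual minimal basis $\Lambda_s(\lambda)^T\otimes I_n$ has rows all of degree $s$. A global sign flip of the rows of $L_s(\lambda)\otimes I_n$ preserves both the row-reducedness and the full-row-rank condition at every $\lambda_0\in\overline{\mathbb{F}}$ required by Theorem \ref{thm:minimal_basis}; hence for both $\sigma=1$ and $\sigma=-1$ the matrix polynomial $K_1(\lambda)$ is a minimal basis with row degrees all equal to $1$, and since $(\sigma L_s(\lambda)\otimes I_n)(\Lambda_s(\lambda)\otimes I_n)=\sigma(L_s(\lambda)\Lambda_s(\lambda))\otimes I_n=0$, the polynomial $\Lambda_s(\lambda)^T\otimes I_n$ is a dual minimal basis to $K_1(\lambda)$ as well. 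This verifies all four conditions in Definition \ref{def:minlinearizations}, so $\mathcal{L}(\lambda)$ is a \emph{strong} block minimal bases pencil.

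Next I would apply Theorem \ref{thm:blockminlin}(b) with the dual minimal bases pairs $\{K_1(\lambda),\,\Lambda_s(\lambda)^T\otimes I_n\}$ and $\{K_2(\lambda),\,\Lambda_s(\lambda)^T\otimes I_n\}$, that is, with $N_1(\lambda)=N_2(\lambda)=\Lambda_s(\lambda)^T\otimes I_n$. Formula \eqref{eq:Qpolinminbaslin} then reads
\[
P(\lambda)=N_2(\lambda)\,M(\lambda)\,N_1(\lambda)^T=(\Lambda_s(\lambda)^T\otimes I_n)(\lambda B+A)(\Lambda_s(\lambda)\otimes I_n),
\]
which is exactly \eqref{eq:condition}, and Theorem \ref{thm:blockminlin}(b) asserts that $\mathcal{L}(\lambda)$ is a strong linearization of $P(\lambda)$ viewed with grade $1+\deg N_1(\lambda)+\deg N_2(\lambda)=1+s+s=2s+1$, as claimed.

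There is no genuine obstacle; the only point requiring care is the bookkeeping with the sign $\sigma$, which must be checked to ensure that $\sigma L_s(\lambda)\otimes I_n$ remains a minimal basis with the same dual and same row-degree data as $L_s(\lambda)\otimes I_n$. Once this observation is in place, the theorem is a one-line consequence of Theorem \ref{thm:blockminlin} applied to the symmetric choice $N_1=N_2$.
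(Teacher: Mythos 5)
Your proposal is correct and follows the same route as the paper: the paper obtains the theorem as an immediate corollary of Theorem \ref{thm:blockminlin} after observing (via Example \ref{ex-L-Lamb}) that $\sigma L_s(\lambda)\otimes I_n$ and $\Lambda_s(\lambda)^T\otimes I_n$ are dual minimal bases for $\sigma\in\{-1,1\}$. Your slightly more explicit verification that the sign $\sigma$ does not disturb the minimal-basis and row-degree conditions is exactly the point the paper leaves implicit.
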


\begin{remark}{\rm
Notice in Theorem \ref{thm:strong} that the pencil \eqref{eq:linearization_general} is a strong linearization of the matrix polynomial with odd grade.
This  is the reason why we say that the  block Kronecker pencils introduced in this section are \emph{associated with odd-grade matrix polynomials}.
Note also that grade can not be replaced just by degree in the statement of Theorem \ref{thm:strong} since it may occur that $\deg(P(\lambda))<2s+1$.
}
\end{remark}

A direct matrix multiplication and some elementary manipulations of summations allow us to obtain from Theorem \ref{thm:strong} the conditions on $\lambda B+A$ that guarantee that the pencil $\mathcal{L}(\lambda)$ in \eqref{eq:linearization_general} is a strong linearization of a given matrix polynomial $P(\lambda)$ with odd degree.
These conditions can be expressed using the block Hadamard product and the sum of the block entries notation introduced in Section \ref{sec:intro} using the matrix polynomial $\Gamma_s(\lambda)$ defined by
\begin{equation}\label{eq:Gamma}
\Gamma_s(\lambda) := (\Lambda_s(\lambda)\otimes I_n)(\Lambda_s(\lambda)^T\otimes I_n) =
\begin{bmatrix}
\lambda^{d-1}I_n & \lambda^{d-2}I_n & \cdots & \lambda^sI_n \\
\lambda^{d-2}I_n & & \iddots & \vdots\\
\vdots & \iddots & & \lambda I_n \\
\lambda^sI_n & \cdots & \lambda I_n& I_n
\end{bmatrix},
\end{equation}
which has a block Hankel structure.
\begin{corollary} \label{cor:givenPblockKron}
Let $P(\lambda) = \sum_{k=0}^d P_k \lambda^k \in \FF[\lambda]^{n \times n}$ with odd degree $d$, let $\mathcal{L}(\lambda)$ be the pencil in \eqref{eq:linearization_general} with $s=(d-1)/2$, let $\Lambda_k(\lambda)$ and $\Gamma_k(\lambda)$ be the matrix polynomials in \eqref{eq:Lambda} and \eqref{eq:Gamma}, respectively. Partition $A$ and $B$ into $(s+1)\times (s +1)$ blocks each of size $n\times n$. let us Denote these blocks by $A_{ij}, B_{ij} \in\FF^{n\times n}$  for $i=1,\hdots,s+1$ and $j=1,\hdots,s +1$.
Then, any of the the following conditions implies that the pencil $\mathcal{L}(\lambda)$ is a strong linearization of $P(\lambda)$.
\begin{enumerate}
\item[\rm (a)]
 \begin{equation}\label{eq:condition_coeff}
   \sum_{i+j=d+2-k} B_{ij} + \sum_{i+j=d+1-k} A_{ij}=P_k, \quad \mbox{for $k=0,1,\hdots,d$,}
 \end{equation}
\item[\rm (b)]
\begin{equation}\label{eq:condition_M}
P(\lambda) = 
\su\left( (\lambda B+A) \odot 
\Gamma_s(\lambda) \right).
\end{equation}
\end{enumerate}
\end{corollary}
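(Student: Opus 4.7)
The plan is to show that both conditions (a) and (b) are equivalent reformulations of the identity
\[
P(\lambda) = (\Lambda_s(\lambda)^T \otimes I_n)(\lambda B + A)(\Lambda_s(\lambda) \otimes I_n),
\]
after which Theorem \ref{thm:strong} immediately yields that $\mathcal{L}(\lambda)$ is a strong linearization of $P(\lambda)$ (note that $2s+1=d=\deg(P(\lambda))$, so grade and degree coincide). Thus the entire corollary reduces to a direct computation of the right-hand side of that identity.

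First I would carry out the matrix multiplication explicitly. Writing $\Lambda_s(\lambda) \otimes I_n$ as a block column with $i$-th block $\lambda^{s+1-i} I_n$ (for $i=1,\dots,s+1$), and similarly $\Lambda_s(\lambda)^T \otimes I_n$ as a block row with $j$-th block $\lambda^{s+1-j} I_n$, the sandwich against the pencil $\lambda B+A$ partitioned into blocks $\lambda B_{ij}+A_{ij}$ gives
\[
(\Lambda_s^T \otimes I_n)(\lambda B + A)(\Lambda_s \otimes I_n) = \sum_{i,j=1}^{s+1} \lambda^{2s+2-i-j}\bigl(\lambda B_{ij} + A_{ij}\bigr).
\]
Collecting the coefficient of $\lambda^k$ and using $d=2s+1$, the $B_{ij}$ contribute when $i+j = d+2-k$ and the $A_{ij}$ contribute when $i+j = d+1-k$. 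Equating this with $\sum_k P_k \lambda^k$ coefficient by coefficient yields exactly condition (a), establishing the equivalence between (a) and the identity above.

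For (b), I would observe that the block-Hankel matrix $\Gamma_s(\lambda)$ defined in \eqref{eq:Gamma} has $(i,j)$-block entry $\lambda^{2s+2-i-j} I_n$, which is precisely the scalar factor appearing in each block of the preceding double sum. Consequently,
\[
\su\bigl((\lambda B + A) \odot \Gamma_s(\lambda)\bigr) = \sum_{i,j=1}^{s+1} \lambda^{2s+2-i-j}\bigl(\lambda B_{ij} + A_{ij}\bigr),
\]
so condition (b) is also exactly the identity $P(\lambda) = (\Lambda_s^T \otimes I_n)(\lambda B+A)(\Lambda_s \otimes I_n)$, now written compactly via the block Hadamard product and the $\su$ operator. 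Combining (a) and (b) with Theorem \ref{thm:strong} completes the proof.

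There is no real obstacle here; the only care needed is in the bookkeeping of the indices (matching the block Hankel anti-diagonal $i+j = \text{const}$ with the appropriate power of $\lambda$) and in noting that the factorization $\Gamma_s(\lambda) = (\Lambda_s(\lambda) \otimes I_n)(\Lambda_s(\lambda)^T \otimes I_n)$ makes the Hadamard/sum formulation automatic from the ordinary matrix-product formulation. The odd-degree hypothesis enters only to ensure $s=(d-1)/2$ is an integer, so that the block sizes of $\mathcal{L}(\lambda)$ are well defined.
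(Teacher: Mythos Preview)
Your proposal is correct and follows exactly the approach the paper indicates: the corollary is stated as an immediate consequence of Theorem~\ref{thm:strong} obtained by a direct matrix multiplication and elementary manipulation of summations, and you have simply written out those computations explicitly. The paper gives no additional argument beyond this.
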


The advantage of \eqref{eq:condition_M} over \eqref{eq:condition_coeff} is that only by inspection we may know in advance the power of $\lambda$ that multiply each block entry $\lambda B_{ij}+A_{ij}$ when \eqref{eq:condition_M} is expanded in the monomial basis.
We illustrate this in the following example,  where we check that three different  block Kronecker pencils that are, indeed, strong linearizations of degree-5 matrix polynomials.
\begin{example}
Let $P(\lambda)=\sum_{k=0}^5 P_k \lambda^k\in\FF[\lambda]^{n\times n}$, and let $\sigma \in\{-1,1\}$.
To obtain strong linearizations of $P(\lambda)$ from  block Kronecker pencils \eqref{eq:linearization_general} we need to find pencils $\lambda B+A$ satisfying \eqref{eq:condition_M}.
We can obtain one just noticing that
\begin{align*}
&\su \left(
\begin{bmatrix}
\lambda P_5 & \lambda P_4 & 0 \\
0 & P_2 & 0 \\
\lambda P_3 & 0 & \lambda P_1+P_0
\end{bmatrix}\odot
\begin{bmatrix}
\lambda^4I_n & \lambda^3 I_n & \lambda^2 I_n \\
\lambda^3 I_n & \lambda^2 I_n & \lambda I_n \\
\lambda^2 I_n & \lambda I_n & I_n
\end{bmatrix} 
\right) = \\
&\su \left(
\begin{bmatrix}
\lambda^5 P_5  & \lambda^4P_4 & 0 \\
0 & \lambda^2 P_2 & 0 \\
\lambda^3 P_3 & 0 & \lambda P_1+P_0
\end{bmatrix}
\right) = P(\lambda)
\end{align*}
clearly holds.
The above equation, together with Corollary \ref{cor:givenPblockKron}(b), implies that the block Kronecker pencil
\begin{equation}\label{eq:ex_1}
\left[
\begin{array}{ccc|cc}
\lambda P_5 & \lambda P_4 & 0 & -I_n & 0 \\
0 & P_2 & 0 & \lambda I_n & -I_n \\
\lambda P_3 & 0 & \lambda P_1+P_0 & 0 & \lambda I_n \\ \hline
-\sigma I_n & \sigma\lambda I_n & 0 & 0 & 0 \\
0 & -\sigma I_n & \sigma \lambda I_n & 0 & 0
\end{array}
\right]
\end{equation}
is a strong linearization of $P(\lambda)$.
We might also have considered the following equation
\begin{align*}
&\su \left(
\begin{bmatrix}
\lambda P_5+P_4 & P_3 & 0 \\
0 & 0 & \lambda P_2 \\
0 & P_1 & P_0
\end{bmatrix}\odot
\begin{bmatrix}
\lambda^4I_n & \lambda^3 I_n & \lambda^2 I_n \\
\lambda^3 I_n & \lambda^2 I_n & \lambda I_n \\
\lambda^2 I_n & \lambda I_n & I_n
\end{bmatrix} 
\right) = \\
&\su \left(
\begin{bmatrix}
\lambda^5 P_5+\lambda^4 P_4 & \lambda^3 P_3 & 0 \\
0 & 0 & \lambda^2 P_2 \\
0 & \lambda P_1 & P_0
\end{bmatrix}
\right) = P(\lambda)
\end{align*}
and obtain, from Corollary \ref{cor:givenPblockKron}(b), that the  block Kronecker pencil
\begin{equation}\label{eq:ex_2}
\left[
\begin{array}{ccc|cc}
\lambda P_5+P_4 & P_3 & 0 & -I_n & 0 \\
0 & 0 & \lambda P_2 & \lambda I_n & -I_n \\
0 & P_1 & P_0 & 0 & \lambda I_n \\ \hline
-\sigma I_n & \sigma\lambda I_n & 0 & 0 & 0 \\
0 & -\sigma I_n & \sigma \lambda I_n & 0 & 0
\end{array}
\right],
\end{equation}
is also strong linearization of $P(\lambda)$.
Note that every block matrix of the pencils \eqref{eq:ex_1} and \eqref{eq:ex_2} is either $\pm I_n$ or $P_k$.
In fact, they are companion forms for matrix polynomials of degree 5 (see Definition \ref{def:companion}).
However, it is possible to construct strong linearizations with other kinds of block entries.
For example, consider the following equation
\begin{align*}
&\su \left(
\begin{bmatrix}
\lambda P_5 & E & -\lambda E \\
\lambda P_4 & \lambda F & \lambda P_2 \\
\lambda (P_3-F) & P_1 & P_0
\end{bmatrix}\odot
\begin{bmatrix}
\lambda^4I_n & \lambda^3 I_n & \lambda^2 I_n \\
\lambda^3 I_n & \lambda^2 I_n & \lambda I_n \\
\lambda^2 I_n & \lambda I_n & I_n
\end{bmatrix} 
\right) = \\
&\su \left(
\begin{bmatrix}
\lambda^5 P_5  & \lambda^3 E & -\lambda^3 E \\
\lambda^4P_4 & \lambda^3 F & \lambda^2 P_2 \\
\lambda^3 (P_3-F) & \lambda P_1 & P_0
\end{bmatrix}
\right) = P(\lambda),
\end{align*}
where $E$ and $F$ are arbitrary $n\times n$ matrices. 
Then, the previous equation and  Corollary \ref{cor:givenPblockKron}(b) imply that the pencil
\begin{equation}\label{eq:ex_3}
\left[
\begin{array}{ccc|cc}
\lambda P_5 & E & -\lambda E & -I_n & 0 \\
\lambda P_4 & \lambda F & \lambda P_2 & \lambda I_n & -I_n \\
\lambda (P_3-F) & P_1 & P_0 & 0 & \lambda I_n \\ \hline
-\sigma I_n & \sigma\lambda I_n & 0 & 0 & 0 \\
0 & -\sigma I_n & \sigma \lambda I_n & 0 & 0
\end{array}
\right],
\end{equation}
is a strong linearization of $P(\lambda)$.
If $F$ is a nonzero matrix, then the pencil \eqref{eq:ex_3} is not a companion form due to the block entry $\lambda (P_3-F)$.
\end{example}

In this section, we showed how to obtain strong linearizations from symmetrizable or skew-symmetrizable block Kronecker pencils for an odd-degree matrix polynomial $P(\lambda)$.
In the following subsections, we present a methodology for obtaining from these families of pencils linearizations that are symmetric or skew-symmetric whenever $P(\lambda)$ is, respectively, symmetric or skew-symmetric.

\subsection{Symmetric linearizations for odd-degree symmetric matrix polynomials}\label{sec:sym_odd}
In this section we characterize all the  structure-preserving strong linearizations for symmetric matrix polynomials with odd degree that can be obtained from the family of symmetrizable block-Kronecker pencils, i.e., from pencils of the form
\begin{equation}
  \label{eq:linearization_sym}
  \mathcal{L}(\lambda)=
  \left[
    \begin{array}{c|c}
      \lambda B+A&L_{s}(\lambda)^{T}\otimes I_{n}\\\hline
      L_{s}(\lambda)\otimes I_{n}&0
      \end{array}
    \right],  
\end{equation}
satisfying the equations in \eqref{eq:condition_coeff} or, equivalently, satisfying \eqref{eq:condition_M}.
    
Notice that the symmetrizable block Kronecker pencil above is symmetric if and only if the pencil $\lambda B+A$ is symmetric. 
For this reason, we obtain in Theorem \ref{thm:solving_sym} all the symmetric pencils $\lambda B+A$ satisfying \eqref{eq:condition_coeff} for a symmetric matrix polynomial $P(\lambda)$.
But first, before addressing this general result,  we start illustrating our approach in Example \ref{ex:sym_odd1}, where we outline the procedure to solve \eqref{eq:condition_coeff} in a small case, namely, for a matrix polynomial of degree $d=5$.
\begin{example}\label{ex:sym_odd1}
In this example we obtain a parametrization of all the symmetric block Kronecker pencils that are symmetric strong linearizations of a degree-5 symmetric matrix polynomial.
Let $P(\lambda)=\sum_{k=0}^5 P_k \lambda^k\in \FF[\lambda]^{n\times n}$ be any degree-5 matrix polynomial. Consider symmetric block Kronecker pencils as in \eqref{eq:linearization_sym} with $s=2$, and  us partition the $3n\times 3n$ matrices $A$ and $B$ into $n\times n$ blocks, denoted by $A_{ij}$ and $B_{ij}$ for $i,j=1,2,3$.
The first step is to consider only pencils $\lambda B+A$ of the form
\[
\lambda B+A = 
\begin{bmatrix}
\lambda B_{11}+A_{11} &  \lambda B_{12}+A_{12}  & \lambda B_{13}+A_{13} \\
\lambda B_{12}^T+A_{12}^T & \lambda B_{22}+A_{22} & \lambda B_{23}+A_{23} \\
\lambda B_{13}^T+A_{13}^T & \lambda B_{23}^T+A_{23}^T & 
\lambda B_{33}+A_{33}
\end{bmatrix}.
\]
The second step is to solve \eqref{eq:condition_coeff} for a pencil $\lambda B+A$ of the above form.
In this case, we obtain the following underdetermined linear system of equations
\begin{align}\label{eq:system_example_sym}
\begin{split}
B_{11} &= P_5, \\
B_{12}+B_{12}^T+A_{11}&=P_4, \\
B_{13}+B_{22}+B_{13}^T+A_{12}+A_{12}^T &= P_3, \\
B_{23}+B_{23}^T + A_{13}+A_{22}+A_{13}^T &= P_2, \\
B_{33}+A_{23}+A_{23}^T &= P_1, \quad \mbox{and} \\
A_{33} &= P_0,
\end{split}
\end{align}
with $6n^2$  equations and $12n^2$ unknowns.
The linear system \eqref{eq:system_example_sym} is consistent with $6n^2$ degrees of freedom that are not difficult to describe.
Indeed, we may take the entries of the matrices of the upper off-diagonal blocks of $A$ and $B$ (i.e., $A_{ij}$ and $B_{ij}$ with $i<j$) as the $6n^2$ free parameters, and, then, set
\begin{align}\label{eq:system_example_sym2}
\begin{split}
&B_{11} := P_5,\\
&A_{11} := P_4-(B_{12}+B_{12}^T), \\
&B_{22} := P_3-(B_{13}+B_{31}^T)-(A_{12}+A_{12}^T),\\
&A_{22} := P_2-(B_{23}+B_{23}^T) - (A_{13}+A_{13}^T), \\
&B_{33} := P_1-(A_{23}+A_{23}^T), \quad \mbox{and} \\
&A_{33} := P_0.
\end{split}.
\end{align}
Finally, notice that the matrices $A_{ii}$ and $B_{ii}$ are symmetric if and only if the matrix polynomial $P(\lambda)$ is symmetric. 
This implies that when $P(\lambda)$ is symmetric, the equations in \eqref{eq:system_example_sym2}  describe the general symmetric pencil  solution $\lambda B+A$ of \eqref{eq:condition_coeff}.
Therefore, when the matrix polynomial $P(\lambda)$ is symmetric, from (a) in Corollary \ref{cor:givenPblockKron}, we obtain that the symmetrizable block Kronecker pencils of the form 
\begin{equation*}
\left[\begin{array}{ccc|cc}
\lambda B_{11}+A_{11} & \lambda B_{12}+A_{12} &\lambda B_{13} + A_{13} & -I_n & 0\\
\lambda B_{12}^T+A_{12}^T &\lambda B_{22}+A_{22}
& \lambda B_{23}+A_{23} & \lambda I_n &  -I_n \\
\lambda B_{13}^T+A_{13}^T & \lambda B_{23}^T+A_{23}^T &\lambda B_{33} + A_{33} & 0 & \lambda I_n \\ \hline
-I_n & \lambda I_n &0 & 0 & 0 \\
0 & -I_n & \lambda I_n & 0 & 0 \\
\end{array}\right],
\end{equation*}
are symmetric strong linearization of $P(\lambda)$, where the entries of the matrices $A_{12},A_{13},A_{23},B_{12},B_{13},B_{23}$ are arbitrary, and the diagonal block matrices  are as in \eqref{eq:system_example_sym2}.
\end{example}

Theorem \ref{thm:solving_sym} is one of the main results in this section.
It provides the general symmetric pencil solution $\lambda B+A$ of the set of equations in \eqref{eq:condition_coeff} when the matrix polynomial $P(\lambda)$ is symmetric. 
\begin{theorem}\label{thm:solving_sym}
Let $P(\lambda)=\sum_{k=0}^d P_k \lambda^k\in \FF[\lambda]^{n\times n}$ be a symmetric matrix polynomial with odd degree. Let $s=(d-1)/2$, and $\lambda B+A\in\FF[\lambda]^{(s+1)n\times (s+1)n}$.  Partition the matrices $A$ and $B$ into $n\times n$ blocks, denoted by $A_{ij}$ and $B_{ij}$ for $i,j=1,2,\hdots,s+1$. 
Then, the symmetric pencil solution $\lambda B+A$ of \eqref{eq:condition_coeff} is obtained setting
\begin{equation}\label{eq:B_sym}
B_{kk} := P_{d-2k+2}- \sum_{\substack{i+j=2k \\ i<j}}\left( B_{ij}+B_{ij}^T \right) 
 -\sum_{\substack{i+j=2k-1 \\ i<j}}\left(A_{ij}+A_{ij}^T\right),
\end{equation}
and
\begin{equation}\label{eq:A_sym}
A_{kk} := P_{d-2k+1}- \sum_{\substack{i+j=2k+1 \\ i<j}}\left( B_{ij}+B_{ij}^T \right) 
 -\sum_{\substack{i+j=2k \\ i<j}}\left(A_{ij}+A_{ij}^T\right),
\end{equation}
for $k=1,2,\hdots,s+1$, where $A_{ji} = A_{ij}^T$ and $B_{ji} = B_{ij}^T$, for $i<j$, and where the entries of the upper off-diagonal blocks of $A$ and $B$ are free parameters.
\end{theorem}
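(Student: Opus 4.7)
The plan is to impose the symmetry conditions $B=B^T$ and $A=A^T$ at the block level (so $B_{ji}=B_{ij}^T$, $A_{ji}=A_{ij}^T$ for all $i,j$, with $A_{ii},B_{ii}$ symmetric) and then observe that, for each $k$, the $k$-th equation in \eqref{eq:condition_coeff} contains exactly one diagonal block as an unknown. Consequently, the underdetermined system decouples one equation at a time and can be solved explicitly, with the upper-triangular off-diagonal blocks playing the role of free parameters.

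The key structural ingredient is a simple parity observation. In a block-symmetric matrix, a sum $\sum_{i+j=c} X_{ij}$ (with $X=A$ or $B$) rewrites, using $X_{ji}=X_{ij}^T$, as
\[
X_{c/2,c/2} + \sum_{\substack{i+j=c\\ i<j}}\bigl(X_{ij}+X_{ij}^T\bigr)
\]
if $c$ is even, and as just the second sum if $c$ is odd. In the $k$-th equation of \eqref{eq:condition_coeff}, the $B$-sum has index $c=d+2-k$ and the $A$-sum has index $c=d+1-k$. Since $d$ is odd, these two indices always have opposite parities. Therefore exactly one of the two sums contributes a diagonal block to the $k$-th equation: a $B_{\ell\ell}$ with $\ell=(d+2-k)/2$ when $k$ is odd, and an $A_{\ell\ell}$ with $\ell=(d+1-k)/2$ when $k$ is even. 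As $k$ ranges over $\{0,1,\ldots,d\}$, these $\ell$'s exhaust $\{1,\ldots,s+1\}$ twice, producing a bijection between the $d+1$ equations and the $2(s+1)=d+1$ diagonal blocks.

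Given this bijection, I would solve each equation for its associated diagonal block, moving all off-diagonal contributions to the right. Substituting $k=d+2-2\ell$ (odd $k$) produces formula \eqref{eq:B_sym} for $B_{\ell\ell}$, and substituting $k=d+1-2\ell$ (even $k$) produces formula \eqref{eq:A_sym} for $A_{\ell\ell}$. Two checks complete the parametrization. First, the diagonal blocks so defined are automatically symmetric: each right-hand side is a linear combination of $P_k$ (symmetric by hypothesis) and terms $M+M^T$ (manifestly symmetric), so consistency with the symmetry ansatz holds with no extra constraint on the free parameters. Second, uniqueness: once the upper off-diagonal blocks are prescribed, each diagonal block is determined by its unique equation, so every symmetric solution of \eqref{eq:condition_coeff} arises in this way.

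I do not expect any genuine obstacle here. The only point requiring care is the bookkeeping of index ranges at the boundary: when $\ell=1$ or $\ell=s+1$, some of the inner sums $\sum_{i+j=c,\,i<j}$ are empty (for instance, $i+j=2$ with $i<j$ has no solution), so the corresponding boundary equations reduce to $B_{11}=P_d$ and $A_{s+1,s+1}=P_0$; this shortens the formulas at the extremes but does not affect the argument. A sanity check on the degree-count is also automatic: the upper off-diagonal blocks contribute $2\binom{s+1}{2}n^2=s(s+1)n^2$ free scalar parameters, matching the $6n^2$ parameters found in Example \ref{ex:sym_odd1} when $s=2$.
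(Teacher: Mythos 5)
Your proof is correct and follows essentially the same approach as the paper: impose the block-symmetry pattern, take the upper off-diagonal blocks as free parameters, and solve the equations in \eqref{eq:condition_coeff} one by one for the diagonal blocks, finally noting that symmetry of those diagonal blocks is automatic when $P(\lambda)$ is symmetric. The paper compresses the middle step into a single "it is straightforward to check," whereas you make explicit the parity observation (because $d$ is odd, the indices $d+2-k$ and $d+1-k$ in the $k$-th equation always have opposite parities, so exactly one diagonal block appears per equation and the $d+1$ equations map bijectively onto the $d+1$ diagonal blocks $B_{11},A_{11},\hdots,B_{s+1,s+1},A_{s+1,s+1}$); this is exactly the bookkeeping the paper leaves to the reader, so it is a welcome expansion rather than a departure.
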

\begin{proof}
We proceed similarly to the strategy outlined in Example \ref{ex:sym_odd1}.
Since we want to obtain the symmetric pencils satisfying \eqref{eq:condition_coeff}, necessarily those pencils satisfy $A_{ij}^T=A_{ji}$ and $B_{ij}^T=B_{ji}$, for $i<j$.
So the first step is to find all the solutions of \eqref{eq:condition_coeff} where the pencil $\lambda B+A$ is of the form
\[
\left[\begin{smallmatrix}
\lambda B_{11}+A_{11} & \lambda B_{12}+A_{12} & \lambda B_{13}+A_{13} & \cdots & \lambda B_{1,s+1}+A_{1,s+1} \\
\lambda B_{12}^T+A_{12}^T & \lambda B_{22}+A_{22} & \lambda B_{23}+A_{23} & \cdots & \lambda B_{2,s+1}+A_{2,s+1} \\
\lambda B_{13}^T+A_{13}^T & \lambda B_{23}^T+A_{23}^T & \lambda B_{33}+A_{33} & \cdots & \lambda B_{3,s+1}+A_{3,s+1}\\
\vdots & \vdots & \vdots & \ddots & \vdots \\
\lambda B_{1,s+1}^T+A_{1,s+1}^T & \lambda B_{2,s+1}^T+A_{2,s+1}^T & \lambda B_{3,s+1}^T+A_{3,s+1}^T & \cdots & \lambda B_{s+1,s+1}+A_{s+1,s+1}
\end{smallmatrix}\right],
\]
and $P(\lambda)$ is any $n\times n$ matrix polynomial of degree $d$ (symmetric or non symmetric).
In this situation, the equations in \eqref{eq:condition_coeff} give rise to a linear system of $(d+1)n^2$ equations with $(s+1)(s+2)n^2$ unknowns.
To solve this linear system, the entries of the off-diagonal block entries $A_{ij}$ and $B_{ij}$, for $i<j$, may be taken as $s(s+1)n^2$ free parameters.
Then, it is straightforward to check that the equations in \eqref{eq:condition_coeff} hold if and only if we set the diagonal block entries $B_{kk}$ and $A_{kk}$  as in \eqref{eq:B_sym} and \eqref{eq:A_sym}.
Finally, just by inspection, it is clear that the diagonal blocks $B_{kk}$ and $A_{kk}$ in \eqref{eq:B_sym} and \eqref{eq:A_sym}, for $k=1,2,\dots,s+1$, are symmetric if and only if the  matrix polynomial $P(\lambda)$ is symmetric.
This implies that when $P(\lambda)$ is symmetric, the equations \eqref{eq:B_sym} and \eqref{eq:A_sym} describe the general symmetric pencil  solution $\lambda B+A$ of the equations in \eqref{eq:condition_coeff}.
\end{proof}

From Theorem \ref{thm:strong} we get that the set of symmetrizable block Kronecker pencils  with  pencils $\lambda B+A$ as in Theorem \ref{thm:solving_sym} are symmetric strong linearizations of the symmetric matrix polynomial $P(\lambda)$.
This set provides a quite large arena of symmetric strong linearizations of symmetric matrix polynomials.
However, it is worth mentioning that not all its  pencils are  companion forms according to Definition \ref{def:companion}.
We illustrate this in Example \ref{ex:sym_odd4}.
\begin{example}\label{ex:sym_odd4}
Let $P(\lambda)=\sum_{k=0}^5 P_k\lambda^k\in\FF[\lambda]^{n\times n}$ be a symmetric matrix polynomial.
The following block Kronecker pencil
\[
\left[
\begin{array}{ccc|cc}
\lambda P_5+P_4 & 0 & E & -I_n & 0 \\
0 & \lambda P_3+P_2-E-E^T & 0 & \lambda I_n & -I_n \\
E^T & 0 & \lambda P_1+P_0 & 0 & \lambda I_n \\ \hline
-I_n & \lambda I_n & 0 & 0 & 0 \\
0 & -I_n & \lambda I_n & 0 & 0
\end{array}
\right],
\] 
where $E$ is any $n\times n$ non skew-symmetric matrix, is a strong linearization of $P(\lambda)$ (since its corresponding pencil $\lambda B+A$ satisfies \eqref{eq:condition_coeff}).
However, note that due to the block entry $\lambda P_3+P_2-E-E^T$, it is not a companion form.
\end{example}

In the remaining of this section, we present some illuminating examples of  block Kronecker pencils that are symmetric companion forms for symmetric odd-degree matrix polynomials. 
That the  block Kronecker pencils in Examples \ref{ex:sym_odd2} and \ref{ex:sym_odd3} are strong linearizations can be verified either checking that the set of the equations in \eqref{eq:condition_coeff} are satisfied, or, more simple, that the equation \eqref{eq:condition_M} is satisfied.
\begin{example}\label{ex:sym_odd2}
We construct here  two different symmetric companion forms of symmetric odd-degree matrix polynomials.
For illustrative purposes, we focus on symmetric degree-7 matrix polynomials  $P(\lambda)=\sum_{k=0}^7 P_k\lambda^k\in\FF[\lambda]^{n\times n}$.
The extension of these companion forms to a companion form for any other odd grade is straightforward.
Our first example is based on the (possibly) simplest choice of a symmetric pencil $\lambda B+A$ satisfying \eqref{eq:condition_M}, namely, $\diag(\lambda P_7+P_6,\lambda P_5+P_4,\lambda P_3+P_2,\lambda P_1+P_0)$.
With this choice for $\lambda B+A$ we obtain the following  block Kronecker pencil
\[
\mathcal{L}_1(\lambda)=
\left[\begin{array}{cccc|ccc}
\lambda P_7+P_6 & 0 & 0 & 0 & -I_n & 0 & 0 \\
0 & \lambda P_5+P_4 & 0 & 0 & \lambda I_n & -I_n & 0 \\
0 & 0 & \lambda P_3+P_2 & 0 & 0 & \lambda I_n & -I_n \\
0 & 0 & 0 & \lambda P_1+P_0 & 0 & 0 & \lambda I_n \\ \hline
-I_n & \lambda I_n & 0 & 0 & 0 & 0 & 0 \\
0 & -I_n & \lambda I_n & 0 & 0 & 0 & 0 \\
0 & 0 & -I_n & \lambda I_n & 0 & 0 & 0
\end{array}\right].
\]
Remarkably, a permuted version of the above pencil has appeared before in the context of Fiedler pencils with repetitions \cite{Greeks2,FPR1}.
Indeed, it is not difficult to show that there exists a permutation matrix $P$ such that 
\[
P^T\mathcal{L}_1(\lambda)P=
\begin{bmatrix}
\lambda P_7+P_6 & -I_n & 0 & 0 & 0 & 0 & 0 \\
-I_n & 0 & \lambda I_n & 0 & 0 & 0 & 0 \\
0 & \lambda I_n & \lambda P_5+P_4 & -I_n & 0 & 0 & 0 \\
0 & 0 & -I_n & 0 & \lambda I_n & 0 & 0 \\
0 & 0 & 0 & \lambda I_n & \lambda P_3+P_2 & -I_n & 0 \\
0 & 0 & 0 & 0 & -I_n & 0 & \lambda I_n \\
0 & 0 & 0 & 0 & 0 & \lambda I_n & \lambda P_1+P_0
\end{bmatrix},
\]
which is the well-known block-tridiagonal symmetric companion form first introduced in \cite{Greeks2}.
Our second example shows a symmetric companion form that has not appeared previously in the literature (to the knowledge of the authors).
This companion form is 
\[
\mathcal{L}_2(\lambda)=
\left[\begin{array}{cccc|ccc}
\lambda P_7-P_6 & \lambda P_6 & 0 & 0 & -I_n & 0 & 0 \\
\lambda P_6 & \lambda P_5-P_4 & \lambda P_4 & 0 & \lambda I_n & -I_n & 0 \\
0 & \lambda P_4 & \lambda P_3-P_2 & \lambda P_2 & 0 & \lambda I_n & -I_n \\
0 & 0 & \lambda P_2 & \lambda P_1+P_0 & 0 & 0 & \lambda I_n \\\hline
-I_n & \lambda I_n & 0 & 0 & 0 & 0 & 0 \\
0 & -I_n & \lambda I_n & 0 & 0 & 0 & 0 \\
0 & 0 & -I_n & \lambda I_n & 0 & 0 & 0 
\end{array}\right],
\]
which also can be permuted to obtain a  low-bandwidth (block-pentadiagonal) symmetric companion form, i.e., there exists a permutation matrix $Q$ such that
\[
Q^T\mathcal{L}_2(\lambda)Q =
\begin{bmatrix}
\lambda P_7-P_6 & -I_n & \lambda P_6 & 0 & 0 & 0 & 0 \\
-I_n & 0 & \lambda I_n & 0 & 0 & 0 & 0 \\
\lambda P_6 & \lambda I_n & \lambda P_5-P_4 & -I_n & \lambda P_4 & 0 & 0 \\
0 & 0 & -I_n & 0 & \lambda I_n & 0 & 0 \\
0 & 0 & \lambda P_4 & \lambda I_n & \lambda P_3-P_2 & -I_n & \lambda P_2 \\
0 & 0 & 0 & 0 & -I_n & 0 & \lambda I_n \\
0 & 0 & 0 & 0 & \lambda P_2 & \lambda I_n & \lambda P_1+P_0
\end{bmatrix}.
\]
\end{example}

It is well-known that the family of Fiedler pencils with repetition (FPR) provides a convenient arena in which to look for structured linearizations of structured polynomials \cite{Greeks2,FPR1,FPR2,FPR3,Greeks3}.
The characterization of all the FPR that are symmetric when the matrix polynomial is has been carried out in \cite{FPR1,Curlett}.
In the following example we show that the examples of symmetric companion forms obtained from FPR in \cite[Example 8]{Greeks3} are, indeed,  block Kronecker pencils (modulo a permutation).
\begin{example}\label{ex:sym_odd3}
Let us consider the symmetric companion forms for symmetric matrix polynomials  $P(\lambda)=\sum_{k=0}^5 P_k\lambda^k\in\FF[\lambda]^{n\times n}$ in {\rm \cite[Example 8]{Greeks}}, that is, the pencils
\[
L_3^\prime(\lambda)=
\begin{bmatrix}
0 & \lambda I_n & -I_n & 0 & 0 \\
\lambda I_n & -\lambda P_1+P_0 & P_1 & 0 & 0 \\
-I_n & P_1 & \lambda P_3+P_2 & \lambda P_4 & \lambda I_n \\
0 & 0 & \lambda P_4 & \lambda P_5-P_4 & -I_n \\
0 & 0 & \lambda I_n & -I_n & 0
\end{bmatrix}
\]
and
\[
L_5^\prime(\lambda)=
\begin{bmatrix}
0 & 0 & \lambda I_n & -I_n & 0 \\
0 & 0 & 0 & \lambda I_n & -I_n \\
\lambda I_n & 0 & -\lambda P_1+P_0 & -\lambda P_2+P_1 & P_2\\
-I_n & \lambda I_n & -\lambda P_2+P_1 & -\lambda P_3+P_2 & P_3 \\
0 & -I_n & P_2 & P_3 & \lambda P_5+P_4
\end{bmatrix}.
\]  
Then, it is easy to check that there exist permutation matrices $P$ and $Q$ such that
\[
P^T L_3^\prime(\lambda) P = 
\left[\begin{array}{ccc|cc}
\lambda P_5-P_4 & \lambda P_4 & 0 & -I_n & 0 \\
\lambda P_4 & \lambda P_3+P_2 & P_1 & \lambda I_n & -I_n \\
0 & P_1 & -\lambda P_1+P_0 & 0 & \lambda I_n\\ \hline
-I_n & \lambda I_n & 0 & 0 & 0 \\
0 & -I_n & \lambda I_n & 0 & 0
\end{array}\right]
\]
and
\[
Q^T L_5^\prime(\lambda) Q = 
\left[\begin{array}{ccc|cc}
\lambda P_5+P_4 & P_3 & P_2 & -I_n & 0 \\
P_3 & -\lambda P_3+P_2 & -\lambda P_2+P_1 & \lambda I_n & -I_n \\
P_2 & -\lambda P_2+P_1 & -\lambda P_1+P_0 & 0 & \lambda I_n \\ \hline
-I_n & \lambda I_n & 0 & 0 & 0 \\
0 & -I_n & \lambda I_n & 0 & 0
\end{array}\right],
\]
which, clearly, are block Kronecker pencils satisfying \eqref{eq:condition_M}.
This examples open the following question: are all the symmetric companion forms obtained from FPR in \cite{FPR1} permuted block Kronecker pencils? The answer of this question will be the subject of future work.
\end{example}

\subsection{Skew-symmetric linearizations for odd-degree skew-symmetric matrix polynomials}\label{sec:skew_odd}

In this section we characterize all the  structure-preserving strong linearizations for skew-symmetric matrix polynomials with odd degree that can be obtained from the family of skew-symmetrizable block-Kronecker pencils, i.e., from pencils of the form
\begin{equation*}
  \label{eq:linearization_skew-sym}
  \mathcal{L}(\lambda)=
  \left[
    \begin{array}{c|c}
      \lambda B+A&L_{s}(\lambda)^{T}\otimes I_{n}\\\hline
      -L_{s}(\lambda)\otimes I_{n}&0
      \end{array}
    \right],
\end{equation*}
satisfying the equations in \eqref{eq:condition_coeff} or, equivalently, satisfying \eqref{eq:condition_M}.
    
Notice that the skew-symmetrizable block Kronecker pencil above is skew-symmetric if and only if the pencil $\lambda B+A$ is also skew-symmetric. 
In Theorem \ref{thm:solving_skew-sym}, we obtain a parametrization of all the skew-symmetric pencils $\lambda B+A$ satisfying \eqref{eq:condition_coeff} for a skew-symmetric matrix polynomial $P(\lambda)$.
Its proof is analogous to the one for Theorem \ref{thm:solving_sym}, so it is omitted.
\begin{theorem}\label{thm:solving_skew-sym}
Let $P(\lambda)=\sum_{k=0}^d P_k\lambda^k \in\FF[\lambda]^{n\times n}$ be a skew-symmetric odd-degree matrix polynomial, let $s=(d-1)/2$, let $\lambda B+A\in\FF[\lambda]^{(s+1)n\times (s+1)n}$, and  let us  partition the matrices $A$ and $B$ into $n\times n$ blocks, denoted by $A_{ij}$ and $B_{ij}$ for $i,j=1,2,\hdots,s+1$. 
Then, the skew-symmetric pencil solution $\lambda B+A$ of \eqref{eq:condition_coeff} is obtained setting
\begin{equation*}\label{eq:B_skew-sym}
B_{kk} = P_{d-2k+2}- \sum_{\substack{i+j=2k \\ i<j}}\left( B_{ij}-B_{ij}^T \right) 
 -\sum_{\substack{i+j=2k-1 \\ i<j}}\left(A_{ij}-A_{ij}^T\right),
\end{equation*}
and
\begin{equation*}\label{eq:A_skew-sym}
A_{kk} = P_{d-2k+1}- \sum_{\substack{i+j=2k+1 \\ i<j}}\left( B_{ij}-B_{ij}^T \right) 
 -\sum_{\substack{i+j=2k \\ i<j}}\left(A_{ij}-A_{ij}^T\right),
\end{equation*}
for $k=1,2,\hdots,s+1$, where  $A_{ji} = -A_{ij}^T$ and $B_{ji} = -B_{ij}^T$, for $i<j$, and where the entries of the upper off-diagonal blocks of $A$ and $B$ are free parameters.
\end{theorem}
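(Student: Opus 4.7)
The plan is to mirror, essentially line-by-line, the argument used in the proof of Theorem \ref{thm:solving_sym}, adjusting only those places where symmetry of the blocks is replaced by skew-symmetry. I would start by imposing the skew-symmetry constraints on $\lambda B+A$, which amount to setting $A_{ji}=-A_{ij}^T$ and $B_{ji}=-B_{ij}^T$ for $i<j$ and requiring the diagonal blocks $A_{kk}$ and $B_{kk}$ to be skew-symmetric. With these constraints in place, the pencil $\lambda B+A$ carries exactly $s(s+1)n^2+(s+1)\binom{n}{2}\cdot 2$ free scalar entries, of which I would treat the entries of the off-diagonal blocks $A_{ij},B_{ij}$ with $i<j$ as $s(s+1)n^2$ free parameters.

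Next, I would substitute this skew-symmetric ansatz into the linear system \eqref{eq:condition_coeff}. In each equation $\sum_{i+j=d+2-k}B_{ij}+\sum_{i+j=d+1-k}A_{ij}=P_k$, I would split the sums according to whether $i<j$, $i=j$, or $i>j$, and use $B_{ji}=-B_{ij}^T$ (and the analogous relation for $A$) to rewrite each off-diagonal pair $B_{ij}+B_{ji}$ as $B_{ij}-B_{ij}^T$. This is the single point where the present proof diverges from the one of Theorem \ref{thm:solving_sym}: the ``$+$'' appearing there becomes ``$-$'' here. Collecting the diagonal block that appears in each equation, one obtains a unique solvability relation for $B_{kk}$ (from the equation indexed by $k'=d-2k+2$) and for $A_{kk}$ (from the equation indexed by $k'=d-2k+1$), leading precisely to the two formulas in the statement.

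To finish, I would observe that, by construction, each expression $B_{ij}-B_{ij}^T$ and $A_{ij}-A_{ij}^T$ in the formulas for $B_{kk}$ and $A_{kk}$ is skew-symmetric, so the right-hand sides are skew-symmetric if and only if the corresponding coefficients $P_{d-2k+2}$ and $P_{d-2k+1}$ are skew-symmetric. Since $d$ is odd and $k$ ranges over $1,\dots,s+1$, the indices $d-2k+2$ and $d-2k+1$ together cover all of $0,1,\dots,d$, so the diagonal blocks $B_{kk}$, $A_{kk}$ are all skew-symmetric exactly when $P(\lambda)$ is skew-symmetric. Consequently, when $P(\lambda)$ is skew-symmetric, the displayed equations furnish the general skew-symmetric pencil solution of \eqref{eq:condition_coeff}.

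The argument is essentially bookkeeping, and I do not anticipate a genuine obstacle. The only point requiring care is the sign tracking when turning the pairs $B_{ij}+B_{ji}$ and $A_{ij}+A_{ji}$ into skew-symmetric combinations, together with checking that the diagonal block of each sum $\sum_{i+j=\text{even}}$ is present (which happens for the $B$-sums yielding $B_{kk}$ and the $A$-sums yielding $A_{kk}$) while no diagonal block appears in the sums indexed by an odd value of $i+j$; this ensures that the formulas for $B_{kk}$ and $A_{kk}$ are decoupled from each other and produce a unique skew-symmetric solution for any choice of the free upper-triangular blocks.
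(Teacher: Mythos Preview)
Your proposal is correct and follows exactly the approach the paper intends: the paper itself simply remarks that the proof is analogous to that of Theorem~\ref{thm:solving_sym} and omits it, and your plan carries out precisely that analogy, with the single necessary change that the skew-symmetry constraints $A_{ji}=-A_{ij}^T$, $B_{ji}=-B_{ij}^T$ turn each off-diagonal pair $B_{ij}+B_{ji}$ into $B_{ij}-B_{ij}^T$ (and similarly for $A$), yielding the displayed formulas for the diagonal blocks. Your final observation that these diagonal blocks are skew-symmetric if and only if the coefficients $P_k$ are skew-symmetric is exactly the counterpart of the closing argument in the proof of Theorem~\ref{thm:solving_sym}.
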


To illustrate Theorem \ref{thm:solving_skew-sym}, in the following example we show a parametrization of the set of  Kronecker pencils that are skew-symmetric strong linearizations of a skew-symmetric matrix polynomial of degree 5.
\begin{example}\label{ex:sym_even1}
Let $P(\lambda)=\sum_{k=0}^5 P_k\lambda^d\in\FF[\lambda]^{n\times n}$ be a skew-symmetric matrix polynomial.
Then, from Theorem \ref{thm:solving_skew-sym} and Corollary \ref{cor:givenPblockKron}(a) we obtain that the set of skew-symmetric block Kronecker pencils of the form 
 \[
\left[\begin{array}{ccc|cc}
\lambda B_{11}+A_{11} & \lambda B_{12}+A_{12} &\lambda B_{13} + A_{13} & -I_n & 0\\
-\lambda B_{12}^T-A_{12}^T &\lambda B_{22}+A_{22}
& \lambda B_{23}+A_{23} & \lambda I_n &  -I_n \\
-\lambda B_{13}^T-A_{13}^T & -\lambda B_{23}^T-A_{23}^T & \lambda B_{33} + A_{33} & 0 & \lambda I_n \\ \hline
I_n & -\lambda I_n &0 & 0 & 0 \\
0 & I_n & -\lambda I_n & 0 & 0 \\
\end{array}\right],
\]
where the matrices $A_{12},A_{13},A_{23},B_{12},B_{13},B_{23}$ are arbitrary, and where the diagonal block matrices  are given by
\begin{align*}
&B_{11} = P_5,\\
&A_{11} = P_4-(B_{12}-B_{12}^T), \\
&B_{22} = P_3-(B_{13}-B_{31}^T)-(A_{12}-A_{12}^T),\\
&A_{22} = P_2-(B_{23}-B_{23}^T) - (A_{13}-A_{13}^T), \\
&B_{33} = P_1-(A_{23}-A_{23}^T), \quad \mbox{and} \\
&A_{33} = P_0,
\end{align*}
are skew-symmetric strong linearizations of $P(\lambda)$.
\end{example}

The set of  block Kronecker pencils with pencils $\lambda B+A$ as in Theorem \ref{thm:solving_skew-sym} provides a large arena of skew-symmetric strong linearizations (an infinite family with $s(s+1)n^2$ free parameters).
However, similarly to what we have noticed in Example \ref{ex:sym_odd4} not all its pencils are skew-symmetric companion forms.
We illustrate this in Example \ref{ex:skew-sym_odd4}.
\begin{example}\label{ex:skew-sym_odd4}
Let $P(\lambda)=\sum_{k=0}^5 P_k\lambda^d\in\FF[\lambda]^{n\times n}$ be a skew-symmetric matrix polynomial.
The following block Kronecker pencil
\[
\left[
\begin{array}{ccc|cc}
\lambda P_5+P_4 & 0 & E & -I_n & 0 \\
0 & \lambda P_3+P_2-E+E^T & 0 & \lambda I_n & -I_n \\
-E^T & 0 & \lambda P_1+P_0 & 0 & \lambda I_n \\ \hline
I_n & -\lambda I_n & 0 & 0 & 0 \\
0 & I_n & -\lambda I_n & 0 & 0
\end{array},
\right]
\]
where $E$ is any $n\times n$ non symmetric matrix, is a strong linearizations of $P(\lambda)$ (since its corresponding pencil $\lambda B+A$ satisfies \eqref{eq:condition_coeff}).
However, notice that due to the block entry $\lambda P_3+P_2-E+E^T$, it is not a companion form.
\end{example}

We end this section with some examples of skew-symmetric strong linearizations obtained from the set of block Kronecker pencils. 
First, in Example \ref{ex:skew-sym_odd2}, we show that the well-known block-tridiagonal skew-symmetric companion form in \cite{Greeks,Skew} is, modulo a permutation, a block Kronecker pencil.
\begin{example}\label{ex:skew-sym_odd2}
In this example, we  consider a skew-symmetric matrix polynomial $P(\lambda)=\sum_{k=0}^7P_k\lambda^k \in\FF[\lambda]^{n\times n}$, but the result presented here can be easily generalized to any odd-degree matrix polynomial.
The simplest choice of a skew-symmetric pencil $\lambda B+A$ satisfying \eqref{eq:condition_coeff} (or \eqref{eq:condition_M}) is, probably, $\diag(\lambda P_7+P_6,\lambda P_5+P_4,\lambda P_3+P_2,\lambda P_1+P_0)$.
With this choice for $\lambda B+A$ we obtain that the following  block Kronecker pencil
\[
\mathcal{L}_1(\lambda)=
\left[\begin{array}{cccc|ccc}
\lambda P_7+P_6 & 0 & 0 & 0 & -I_n & 0 & 0 \\
0 & \lambda P_5+P_4 & 0 & 0 & \lambda I_n & -I_n & 0 \\
0 & 0 & \lambda P_3+P_2 & 0 & 0 & \lambda I_n & -I_n \\
0 & 0 & 0 & \lambda P_1+P_0 & 0 & 0 & \lambda I_n \\ \hline
I_n & -\lambda I_n & 0 & 0 & 0 & 0 & 0 \\
0 & I_n & -\lambda I_n & 0 & 0 & 0 & 0 \\
0 & 0 & I_n & -\lambda I_n & 0 & 0 & 0
\end{array}\right],
\]
is a skew-symmetric strong linearization for $P(\lambda)$.
Additionally, it is not difficult to show that there exists a permutation matrix $P$ such that 
\[
P^T\mathcal{L}_1(\lambda)P=
\begin{bmatrix}
\lambda P_7+P_6 & -I_n & 0 & 0 & 0 & 0 & 0 \\
I_n & 0 & -\lambda I_n & 0 & 0 & 0 & 0 \\
0 & \lambda I_n & \lambda P_5+P_4 & -I_n & 0 & 0 & 0 \\
0 & 0 & I_n & 0 & -\lambda I_n & 0 & 0 \\
0 & 0 & 0 & \lambda I_n & \lambda P_3+P_2 & -I_n & 0 \\
0 & 0 & 0 & 0 & I_n & 0 & -\lambda I_n \\
0 & 0 & 0 & 0 & 0 & \lambda I_n & \lambda P_1+P_0
\end{bmatrix},
\]
which is a slight variation of the  block tridiagonal skew-symmetric companion forms introduced in \cite{Greeks,Skew}.
\end{example}

Notice that the pencils $\lambda B+A$ in the block Kronecker pencils in Examples \ref{ex:sym_odd2} and \ref{ex:sym_odd3} are (i) block symmetric (recall Definition \ref{def:block_transpose}); and (ii) constructed using blocks of the form \eqref{eq:M_block_entry}, for some constants $\alpha_{ij},\beta_{ij}$ and natural numbers $\ell,t$.
If the matrix polynomial $P(\lambda)$ is skew-symmetric, blocks of the form \eqref{eq:M_block_entry} are also skew-symmetric.
But block symmetric pencils with skew-symmetric blocks are skew-symmetric, as we show in Lemma \ref{lemma:skew-sym}.
\begin{lemma}\label{lemma:skew-sym}
Let $\lambda B+A$ be be an $pn\times pn$ pencil partitioned into $p\times p$ blocks of size $n\times n$ and let us denote by $\lambda B_{ij}+A_{ij}$ the $(i,j)$-block-entries of $\lambda B+A$.
If $\lambda B+A$ is block symmetric, i.e., $(\lambda B+A)^{\mathcal{B}}=\lambda B+A$ and all its block entries are skew-symmetric pencils, then the pencil $\lambda B+A$ is skew-symmetric.
\end{lemma}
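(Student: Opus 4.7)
The plan is to verify the identity $(\lambda B+A)^T = -(\lambda B+A)$ by comparing the two matrices block by block. Since transposition of a block-partitioned matrix swaps the block positions and transposes each block, the $(i,j)$-block of $(\lambda B+A)^T$ equals $(\lambda B_{ji}+A_{ji})^T$. So the goal reduces to showing
\[
(\lambda B_{ji}+A_{ji})^T = -(\lambda B_{ij}+A_{ij}) \quad \text{for all } i,j.
\]

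First I would apply the hypothesis that each block $\lambda B_{ij}+A_{ij}$ is a skew-symmetric pencil, which by definition means $A_{ij}^T = -A_{ij}$ and $B_{ij}^T = -B_{ij}$ for every pair $(i,j)$, and in particular for $(j,i)$. This gives $(\lambda B_{ji}+A_{ji})^T = \lambda B_{ji}^T + A_{ji}^T = -\lambda B_{ji}-A_{ji}$. Second, I would invoke the block-symmetry hypothesis $(\lambda B+A)^{\mathcal{B}} = \lambda B+A$, which (unwinding Definition \ref{def:block_transpose}) translates precisely into the block-level identities $B_{ji} = B_{ij}$ and $A_{ji} = A_{ij}$. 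Substituting these into the previous step yields $(\lambda B_{ji}+A_{ji})^T = -\lambda B_{ij}-A_{ij}$, which is exactly the $(i,j)$-block of $-(\lambda B+A)$, completing the verification.

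There is no real obstacle here: the argument is a one-line bookkeeping computation that combines the two structural assumptions in the only natural way. The only minor care required is to keep track of the distinction between the \emph{block transpose} operation $(\cdot)^{\mathcal{B}}$ (which permutes blocks but does not transpose them internally) and the ordinary transpose $(\cdot)^T$ (which does both, since it acts on the full $pn\times pn$ matrix). The lemma can then be summarized as the identity $(\lambda B+A)^T = \bigl(\text{entrywise transpose of blocks}\bigr) \circ \bigl((\lambda B+A)^{\mathcal{B}}\bigr)$, so that the two hypotheses each contribute one factor of $-1$, combining to give skew-symmetry.
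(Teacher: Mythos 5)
Your argument is correct, and it is the natural (and essentially unique) proof of this observation; the paper in fact states Lemma~\ref{lemma:skew-sym} without proof, evidently regarding it as immediate. Your block-by-block verification supplies exactly the missing bookkeeping, and your closing remark correctly isolates the key conceptual point: the ordinary transpose factors as the block transpose $(\cdot)^{\mathcal{B}}$ followed by blockwise transposition, and each of the two hypotheses controls one of these factors.
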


Lemma \ref{lemma:skew-sym} implies that any block symmetric pencil $\lambda B+A$ using blocks of the form \eqref{eq:M_block_entry} constructed to obtain symmetric strong linearizations from symmetrizable block Kronecker pencils can also be used to obtain  strong skew-symmetric linearizations from skew-symmetrizable block Kronecker pencils.
We illustrate this in the following example.
\begin{example}\label{ex:skew-sym_odd3}
Let $P(\lambda)=\sum_{k=0}^5P_k\lambda^k \in\FF[\lambda]^{n\times n}$ be a skew-symmetric matrix polynomial, and let us consider the pencils $\lambda B+A$ in the  block Kronecker pencils in Example \ref{ex:sym_odd3}, i.e., the pencils
\[
\begin{bmatrix}
\lambda P_5-P_4 & \lambda P_4 & 0 \\
\lambda P_4 & \lambda P_3+P_2 & P_1 \\
0 & P_1 & -\lambda P_1+P_0
\end{bmatrix}\, \mbox{and} \,\,
\begin{bmatrix}
\lambda P_5+P_4 & P_3 & P_2 \\
P_3 & -\lambda P_3+P_2 & -\lambda P_2+P_1\\
P_2 & -\lambda P_2+P_1 & -\lambda P_1+P_0
\end{bmatrix}.
\]
Both pencils are block symmetric with blocks of the form \eqref{eq:M_block_entry}, thus, we obtain from Lemma \ref{lemma:skew-sym} that they are skew-symmetric.
In addition, they satisfy \eqref{eq:condition_M}, which implies, by Corollary \ref{cor:givenPblockKron}, that the following  block Kronecker pencils
\[ 
\left[\begin{array}{ccc|cc}
\lambda P_5-P_4 & \lambda P_4 & 0 & -I_n & 0 \\
\lambda P_4 & \lambda P_3+P_2 & P_1 & \lambda I_n & -I_n \\
0 & P_1 & -\lambda P_1+P_0 & 0 & \lambda I_n\\ \hline
I_n & -\lambda I_n & 0 & 0 & 0 \\
0 & I_n & -\lambda I_n & 0 & 0
\end{array}\right]
\]
and
\[
\left[\begin{array}{ccc|cc}
\lambda P_5+P_4 & P_3 & P_2 & -I_n & 0 \\
P_3 & -\lambda P_3+P_2 & -\lambda P_2+P_1 & \lambda I_n & -I_n \\
P_2 & -\lambda P_2+P_1 & -\lambda P_1+P_0 & 0 & \lambda I_n \\ \hline
I_n & -\lambda I_n & 0 & 0 & 0 \\
0 & I_n & -\lambda I_n & 0 & 0
\end{array}\right]
\]
are skew-symmetric strong linearizations of $P(\lambda)$.
Finally, notice that both pencils are skew-symmetric companion forms of degree-5 skew-symmetric matrix polynomials.
\end{example}

\section{Modified symmetrizable and skew-symmetrizable block Kronecker pencils associated with even-grade matrix polynomials with nonsingular leading coefficients}\label{sec:Kronecker_pencils_even}

In this section we introduce two new  families of minimal bases pencils that we have named \emph{modified symmetrizable block Kronecker pencils} and \emph{modified skew-symmetrizable block Kronecker pencils}. These will be used to construct strong linearization of symmetric and skew-symmetric even-degree matrix polynomials with nonsingular leading or trailing coefficients.
\begin{definition}\label{def:blockKronlin_even}
Let $\widehat{L}_k(\lambda)$ be the matrix pencil defined in \eqref{eq:Lk2}, let $t$ and $n$ be nonzero natural numbers, let $\sigma\in\{1,-1\}$, and let $\lambda B+A$ be an arbitrary pencil of size $(t+1)n\times (t+1)n$.
Then, any matrix pencil of the form
\begin{equation}\label{eq:linearization_even}
\mathcal{L}(\lambda)=
\left[\begin{array}{c|c}
\lambda B+A & \widehat{L}_t(\lambda)^T\otimes I_n \\ \hline
\sigma \widehat{L}_t(\lambda)\otimes I_n & \phantom{\Big{(}} 0 \phantom{\Big{(}}
\end{array}\right]
\end{equation}
is called a \emph{modified symmetric block Kronecker pencil} if $\sigma=1$ or a \emph{modified skew-symmetric Kronecker pencil} if $\sigma=-1$.
Additionally, for simplicity or when the scalar $\sigma$ is not specified, the pencil \eqref{eq:linearization_even} is called a \emph{modified block Kronecker pencil}.
\end{definition}
\begin{remark}\label{remark:partition}
{\rm If we partition the pencil $(t+1)n\times (t+1)n$ pencil $\lambda B+A$ in \eqref{eq:linearization_even} as follows
\begin{equation}\label{eq:partition}
\lambda B+A = 
\begin{bmatrix}
M_{11}(\lambda) & M_{12}(\lambda) \\
M_{21}(\lambda) & M_{22}(\lambda)
\end{bmatrix},
\end{equation}
where $M_{11}(\lambda)\in\FF[\lambda]^{n\times n}$, $M_{12}(\lambda),M_{21}(\lambda)^T\in\FF[\lambda]^{n\times tn}$ and $M_{22}(\lambda)\in\FF[\lambda]^{tn\times tn}$, the pencil \eqref{eq:linearization_even} may be partitioned as
\[ 
\mathcal{L}(\lambda) = \left[\begin{array}{c|c|c}
M_{11}(\lambda) & M_{12}(\lambda) & 0 \\ \hline
\sigma M_{12}(\lambda)^T & \phantom{\Big{(}} M_{22}(\lambda) \phantom{\Big{(}} & L_{t-1}(\lambda)^T \otimes I_n \\ \hline
0 & L_{t-1}(\lambda)\otimes I_n & 0
\end{array}\right],
\] 
where the matrix polynomial $L_k(\lambda)$ has been defined in \eqref{eq:Lk}.
}
\end{remark}

From Example \ref{ex-L-Lamb2} we obtain that the matrix polynomials $\sigma \widehat{L}_t(\lambda)\otimes I_n$ and $\widehat{\Lambda}_t(\lambda)\otimes I_n$ and $\sigma \rev \widehat{L}_t(\lambda)\otimes I_n$ and $\widetilde{\Lambda}_t(\lambda)\otimes I_n$, with $\sigma\in\{-1,1\}$, are two pairs of dual minimal bases.
Thus, as a immediate corollary of Theorem \ref{thm:blockminlin} we  obtain the following result for modified block Kronecker pencils.
\begin{theorem}\label{thm:strong_even}
Let $\mathcal{L}(\lambda)$ be the pencil in \eqref{eq:linearization_even}, and let $\widehat{\Lambda}_k(\lambda)$ and $\widetilde{\Lambda}_k(\lambda)$ be the matrix polynomials in \eqref{eq:Lambda2} and \eqref{eq:Lambda3}, respectively.
Then $\mathcal{L}(\lambda)$ is a linearization of the matrix polynomial
\begin{equation}\label{eq:condition_even}
P(\lambda):=(\widehat{\Lambda}_t(\lambda)^T\otimes I_n)(\lambda B+A)(\widehat{\Lambda}_t(\lambda)\otimes I_n)
\end{equation}
of grade $2t$. 
In addition, the pencil $\rev \mathcal{L}(\lambda)$ is a linearization of the matrix polynomial
\begin{equation}\label{eq:condition_even_tilde}
\widetilde{P}(\lambda):=(\widetilde{\Lambda}_t(\lambda)^T\otimes I_n)(\lambda A+B)(\widetilde{\Lambda}_t(\lambda)\otimes I_n)
\end{equation}
of grade also $2t$. 
\end{theorem}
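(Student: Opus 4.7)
The plan is to apply Theorem~\ref{thm:blockminlin}(a) twice: once directly to $\mathcal{L}(\lambda)$ and once to its reversal $\rev\mathcal{L}(\lambda)$. In each case I must recognise the pencil as a block minimal bases pencil in the sense of Definition~\ref{def:minlinearizations}, verify that the two anti-diagonal blocks $K_1$ and $K_2$ are minimal bases, and exhibit their dual minimal bases $N_1$, $N_2$ so that the associated polynomial $N_2 M N_1^T$ of \eqref{eq:Qpolinminbaslin} can be written down. Crucially, all the minimal-basis information I need has been collected in Example~\ref{ex-L-Lamb2}.

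For the first assertion, the pencil \eqref{eq:linearization_even} already matches the template with $M(\lambda)=\lambda B+A$, $K_1(\lambda)=\sigma\widehat{L}_t(\lambda)\otimes I_n$ and $K_2(\lambda)=\widehat{L}_t(\lambda)\otimes I_n$. By Example~\ref{ex-L-Lamb2}, $\widehat{L}_t(\lambda)\otimes I_n$ is a minimal basis with $\widehat{\Lambda}_t(\lambda)^T\otimes I_n$ as a dual minimal basis, and the sign $\sigma\in\{-1,1\}$ affects neither property. Setting $N_1(\lambda)=N_2(\lambda)=\widehat{\Lambda}_t(\lambda)^T\otimes I_n$ in Theorem~\ref{thm:blockminlin}(a) then identifies $\mathcal{L}(\lambda)$ as a linearization of
\[
(\widehat{\Lambda}_t(\lambda)^T\otimes I_n)(\lambda B+A)(\widehat{\Lambda}_t(\lambda)\otimes I_n)=P(\lambda).
\]
A direct block-inspection of this product shows that its actual degree is at most $2t-1$, so $P(\lambda)$ may legitimately be regarded as a matrix polynomial of grade $2t$ (with a possibly vanishing leading coefficient).

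For the second assertion, a straightforward computation gives
\[
\rev\mathcal{L}(\lambda)=\left[\begin{array}{c|c}\lambda A+B & (\rev\widehat{L}_t(\lambda))^T\otimes I_n\\\hline\sigma\,(\rev\widehat{L}_t(\lambda))\otimes I_n & 0\end{array}\right],
\]
which again fits the template of Definition~\ref{def:minlinearizations}. The subtlety here is precisely the one highlighted in Example~\ref{ex-L-Lamb2}: although $\widehat{L}_t(\lambda)$ has $\widehat{\Lambda}_t(\lambda)^T$ as its dual, the same is \emph{not} true after reversal, because $\rev\widehat{\Lambda}_t(\lambda)$ fails to be a minimal basis. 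The correct dual to $\rev\widehat{L}_t(\lambda)\otimes I_n$, also supplied by that example, is $\widetilde{\Lambda}_t(\lambda)^T\otimes I_n$. Applying Theorem~\ref{thm:blockminlin}(a) with $M'(\lambda)=\lambda A+B$ and $N_1(\lambda)=N_2(\lambda)=\widetilde{\Lambda}_t(\lambda)^T\otimes I_n$ then shows that $\rev\mathcal{L}(\lambda)$ is a linearization of $\widetilde{P}(\lambda)$, and the same degree-bound argument justifies treating $\widetilde{P}(\lambda)$ as a grade-$2t$ polynomial.

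The argument is essentially bookkeeping; the only non-automatic step is selecting the correct dual minimal basis after reversal, and it is precisely the asymmetry between $\widehat{L}_t$ and $\rev\widehat{L}_t$ that forces $\rev\mathcal{L}(\lambda)$ to linearize $\widetilde{P}(\lambda)$ rather than $\rev P(\lambda)$. Consequently this theorem alone does not yield a strong linearization of $P(\lambda)$; any such strengthening would need an additional argument relating $\widetilde{P}$ to $\rev P$, which is not available at this level of generality.
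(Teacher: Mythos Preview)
Your proof is correct and follows essentially the same route as the paper: recognise $\mathcal{L}(\lambda)$ and $\rev\mathcal{L}(\lambda)$ as block minimal bases pencils, quote Example~\ref{ex-L-Lamb2} for the relevant pairs of dual minimal bases (including the crucial replacement of $\rev\widehat{\Lambda}_t$ by $\widetilde{\Lambda}_t$ after reversal), and apply Theorem~\ref{thm:blockminlin}(a). Your added remarks---that $\deg P(\lambda)\leq 2t-1$ so grade $2t$ is legitimate, and that the mismatch between $\widetilde{P}$ and $\rev P$ is exactly why one does not yet get a \emph{strong} linearization---are accurate and anticipate the paper's subsequent discussion.
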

\begin{remark}{\rm
Notice that the matrix polynomials $P(\lambda)$ and $\widetilde{P}(\lambda)$ in \eqref{eq:condition_even} and \eqref{eq:condition_even_tilde} have even grade.
This is the reason why we say that the families of modified symmetrizable and skew-symmetrizable block Kronecker pencils are {\rm associated with even-grade matrix polynomials}.
}
\end{remark}

In general it is not true that $\widetilde{P}(\lambda)=\rev P(\lambda)$, where $P(\lambda)$ and $\widetilde{P}(\lambda)$ are the matrix polynomials in \eqref{eq:condition_even} and \eqref{eq:condition_even_tilde}.
Therefore, the modified block Kronecker pencil $\mathcal{L}(\lambda)$ in \eqref{eq:linearization_even} is not, in general, a strong linearization for $P(\lambda)$.
However, we show in Theorem \ref{thm:mild_conditions} that under some extra conditions we can obtain modified block Kronecker pencils that are, indeed, strong linearizations for a certain matrix polynomial.
To prove this result, the following lemma will be useful.
\begin{lemma}\label{lemma:aux_even}
Let $d$ be an even number, let $Q(\lambda)\in\FF[\lambda]^{n\times n}$ be a degree-$(d-1)$ matrix polynomial, let $P\in\FF^{n\times n}$ be a nonsingular matrix, and set $t:=d/2$.
Then, the following statements hold.
\begin{enumerate}
\item[\rm (a)] The matrix polynomials
\[
P(\lambda)=\lambda^d P + Q(\lambda) \quad \mbox{and} \quad 
\widehat{P}(\lambda)=
\begin{bmatrix}
-P & \lambda^t P \\
\lambda^t P & Q(\lambda)
\end{bmatrix}
\] 
are extended unimodularly equivalent. 
\item[\rm (b)] The matrix polynomials
\[
\rev_d P(\lambda)=P + \lambda \rev_{d-1}Q(\lambda) \quad \mbox{and} \quad 
\widetilde{P}(\lambda)=
\begin{bmatrix}
-\lambda P &  P \\
 P & \rev_{d-1} Q(\lambda)
\end{bmatrix}
\] 
are extended unimodularly equivalent.
\end{enumerate}
\end{lemma}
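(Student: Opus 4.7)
The plan is to prove both parts by explicit block row and column operations with unimodular $2\times 2$ block matrices of size $2n\times 2n$, exploiting the nonsingularity of $P$ to clear off-diagonal blocks and isolate $P(\lambda)$ (in part (a)) or $\rev_d P(\lambda)$ (in part (b)) on the diagonal. A final constant normalization using $P^{-1}$ then produces the canonical form $\diag(I_n, \cdot)$ demanded by extended unimodular equivalence.

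For part (a), I would start with $\widehat{P}(\lambda)$ and first left-multiply by the unimodular matrix $U(\lambda) := \begin{bmatrix} I_n & 0 \\ \lambda^t I_n & I_n \end{bmatrix}$. This annihilates the $(2,1)$ block while transforming the $(2,2)$ block into $\lambda^{2t} P + Q(\lambda)$, which equals $P(\lambda)$ since $d = 2t$. Right-multiplication by $V(\lambda) := \begin{bmatrix} I_n & \lambda^t I_n \\ 0 & I_n \end{bmatrix}$ then removes the $(1,2)$ block, leaving $\diag(-P, P(\lambda))$. Left-multiplying by the constant unimodular $\diag(-P^{-1}, I_n)$ finally yields $\diag(I_n, P(\lambda))$, which establishes the extended unimodular equivalence with $P(\lambda)$.

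Part (b) is analogous but now exploits the constant nonsingular \emph{off}-diagonal blocks $P$ of $\widetilde{P}(\lambda)$. The plan is to right-multiply by $\begin{bmatrix} I_n & 0 \\ \lambda I_n & I_n \end{bmatrix}$ to clear the $(1,1)$ block; the new $(2,1)$ block becomes $P + \lambda \rev_{d-1} Q(\lambda)$, which equals $\rev_d P(\lambda)$ by the very definition of the $d$-reversal applied to $P(\lambda) = \lambda^d P + Q(\lambda)$. Then left-multiplication by $\begin{bmatrix} I_n & 0 \\ -\rev_{d-1} Q(\lambda)\, P^{-1} & I_n \end{bmatrix}$ eliminates the remaining $(2,2)$ entry and produces the anti-block-diagonal form $\begin{bmatrix} 0 & P \\ \rev_d P(\lambda) & 0 \end{bmatrix}$. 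A column swap followed by left-multiplication by $\diag(P^{-1}, I_n)$ gives $\diag(I_n, \rev_d P(\lambda))$, finishing the argument.

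I do not foresee a serious obstacle: both reductions are of Schur-complement type, driven entirely by the invertibility of the constant coefficient $P$, and the unimodularity of every factor used is clear by inspection (each has constant determinant $\pm 1$, or is a nonsingular constant matrix). The only points requiring any care are verifying the two polynomial identities $\lambda^{2t} P + Q(\lambda) = P(\lambda)$ and $P + \lambda \rev_{d-1} Q(\lambda) = \rev_d P(\lambda)$, both of which are immediate from the hypotheses $d=2t$ and $\deg Q \le d-1$, and keeping track of the block structure through the permutation step in (b).
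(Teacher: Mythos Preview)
Your proposal is correct and follows essentially the same approach as the paper. Both proofs reduce $\widehat{P}(\lambda)$ and $\widetilde{P}(\lambda)$ to block-diagonal form via explicit unimodular block row and column operations exploiting the nonsingularity of $P$; the only differences are cosmetic --- the paper absorbs the $P^{-1}$ normalization and (in part (b)) the column swap into single left and right factors, whereas you perform these as separate steps.
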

\begin{proof}
Since the matrix $P$ is nonsingular, the following matrix polynomials are unimodular
\[
\begin{bmatrix}
-P^{-1} & 0 \\
\lambda^t I_n & I_n
\end{bmatrix} \quad \mbox{and} \quad
\begin{bmatrix}
I_n & -\lambda^t I_n \\
0 & I_n
\end{bmatrix}.
\]
Then, notice
\[
\begin{bmatrix}
-P^{-1} & 0 \\
\lambda^t I_n & I_n
\end{bmatrix}
\begin{bmatrix}
-P & \lambda^t P \\
\lambda^t P & Q(\lambda)
\end{bmatrix}
\begin{bmatrix}
I_n & -\lambda^t I_n \\
0 & I_n
\end{bmatrix} =
\begin{bmatrix}
I_n & 0 \\ 0 & P(\lambda)
\end{bmatrix},
\]
so part (a) is true. 
In addition, the nonsingularity of $P$ also implies that the following matrix polynomials
\[
\begin{bmatrix}
P^{-1} & 0 \\
-\rev_{d-1}Q(\lambda)P^{-1} & I_n 
\end{bmatrix} \quad \mbox{and} \quad
\begin{bmatrix}
0 & I_n \\
I_n & \lambda I_n
\end{bmatrix},
\]
are unimodular. 
Finally, notice
\[
\begin{bmatrix}
P^{-1} & 0 \\
-\rev_{d-1}Q(\lambda)P^{-1} & I_n 
\end{bmatrix}
\begin{bmatrix}
-\lambda P &  P \\
 P & \rev_{d-1} Q(\lambda)
\end{bmatrix}
\begin{bmatrix}
0 & I_n \\
I_n & \lambda I_n
\end{bmatrix} =
\begin{bmatrix}
I_n & 0 \\
0 & \rev_d P(\lambda)
\end{bmatrix},
\]
so part (b) is also true.
\end{proof}

In Theorem \ref{thm:mild_conditions} we show that under some extra conditions modified  block Kronecker pencils are strong linearizations for certain matrix polynomials. 
\begin{theorem}\label{thm:mild_conditions}
Let $\mathcal{L}(\lambda)$ be the pencil in \eqref{eq:linearization_even}, and let $\widehat{\Lambda}_k(\lambda)$ be the matrix polynomial in \eqref{eq:Lambda2}.
If
\begin{equation}\label{eq:condition_new}
(\widehat{\Lambda}_t(\lambda)^T\otimes I_n)(\lambda B+A)(\widehat{\Lambda}_t(\lambda)\otimes I_n) = 
\begin{bmatrix}
-P & \lambda^t P \\
\lambda^t P & Q(\lambda)
\end{bmatrix},
\end{equation}
for some nonsingular matrix $P\in\mathbb{F}^{n\times n}$ and some grade-$(2t-1)$ matrix polynomial $Q(\lambda)\in \FF[\lambda]^{n\times n}$, then the pencil $\mathcal{L}(\lambda)$ is a strong linearization of the grade-$2t$ matrix polynomial $P(\lambda):=\lambda^{2t} P+Q(\lambda)$.
\end{theorem}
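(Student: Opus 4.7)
The plan is to prove both halves of the strong-linearization definition using Theorem \ref{thm:strong_even} in tandem with the two parts of Lemma \ref{lemma:aux_even}. For the linearization half, I would observe that hypothesis \eqref{eq:condition_new} is precisely the statement that the matrix polynomial associated with $\mathcal{L}(\lambda)$ by Theorem \ref{thm:strong_even}, i.e., $\widehat{P}(\lambda):=(\widehat{\Lambda}_t(\lambda)^T\otimes I_n)(\lambda B+A)(\widehat{\Lambda}_t(\lambda)\otimes I_n)$, coincides with the $2\times 2$ block matrix polynomial appearing in Lemma \ref{lemma:aux_even}(a) with $d=2t$. Hence Theorem \ref{thm:strong_even} gives that $\mathcal{L}(\lambda)$ is a linearization of $\widehat{P}(\lambda)$, and Lemma \ref{lemma:aux_even}(a) produces an extended unimodular equivalence between $\widehat{P}(\lambda)$ and $P(\lambda)=\lambda^{2t}P+Q(\lambda)$. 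Since extended unimodular equivalence preserves the property of being a linearization, $\mathcal{L}(\lambda)$ linearizes $P(\lambda)$.

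For the strong half I need $\rev\mathcal{L}(\lambda)$ to linearize $\rev_{2t}P(\lambda)$. Theorem \ref{thm:strong_even} already tells me $\rev\mathcal{L}(\lambda)$ is a linearization of $\widetilde{P}(\lambda):=(\widetilde{\Lambda}_t(\lambda)^T\otimes I_n)(\lambda A+B)(\widetilde{\Lambda}_t(\lambda)\otimes I_n)$, so the real content is to show that $\widetilde{P}(\lambda)$ equals the $2\times 2$ block matrix $\left[\begin{smallmatrix}-\lambda P & P \\ P & \rev_{2t-1}Q(\lambda)\end{smallmatrix}\right]$ of Lemma \ref{lemma:aux_even}(b). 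To do this I would partition $\lambda B+A$ as in Remark \ref{remark:partition} into blocks $M_{ij}(\lambda)$, observe that $\lambda A+B=\rev(\lambda B+A)$ has blocks $\rev M_{ij}(\lambda)$, and use that the nontrivial block-row of $\widetilde{\Lambda}_t(\lambda)^T\otimes I_n$ is $(\rev_{t-1}\Lambda_{t-1}(\lambda))^T\otimes I_n$. Each block of $\widetilde{P}(\lambda)$ then reduces to a single reversal identity applied to the corresponding block equation extracted from \eqref{eq:condition_new}: the $(1,1)$ block becomes $\rev(-P)=-\lambda P$; the $(1,2)$ and $(2,1)$ blocks reduce to $\lambda^t M_{12}(1/\lambda)(\Lambda_{t-1}(1/\lambda)\otimes I_n)=\lambda^t\cdot\lambda^{-t}P=P$ by evaluating the hypothesis at $1/\lambda$; and the $(2,2)$ block collapses to $\rev_{2t-1}Q(\lambda)$ once the three grade contributions combine to $\lambda^{2t-1}$. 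Lemma \ref{lemma:aux_even}(b) then furnishes the required extended unimodular equivalence of $\widetilde{P}(\lambda)$ with $\rev_{2t}P(\lambda)$, and the strong-linearization property follows.

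The main obstacle is the $(2,2)$-block calculation, which requires simultaneously reversing a pencil and two $(t-1)$-grade vector polynomials and verifying that the combined exponent $1+(t-1)+(t-1)=2t-1$ agrees with the grade imposed on $Q(\lambda)$ by the hypothesis. Once this degree bookkeeping is correct, the nonsingularity of $P$ is absorbed by Lemma \ref{lemma:aux_even} and no further argument is needed.
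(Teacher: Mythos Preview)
Your proposal is correct and follows essentially the same approach as the paper: both argue the linearization half directly from Theorem~\ref{thm:strong_even} and Lemma~\ref{lemma:aux_even}(a), then for the reversal half partition $\lambda B+A$ into the blocks $M_{ij}(\lambda)$, take the $1$-, $t$-, $t$-, and $(2t-1)$-reversals of the four block equations coming from \eqref{eq:condition_new} to identify $\widetilde{P}(\lambda)$ with the matrix in Lemma~\ref{lemma:aux_even}(b), and conclude. Your degree bookkeeping for the $(2,2)$ block and your identification of the nontrivial row of $\widetilde{\Lambda}_t(\lambda)^T$ with $\rev_{t-1}\Lambda_{t-1}(\lambda)^T$ are exactly the computations the paper summarizes in one line.
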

\begin{proof}
That the pencil $\mathcal{L}(\lambda)$ is a linearization of $P(\lambda)$ follows directly from Theorem \ref{thm:strong_even} and Lemma \ref{lemma:aux_even}(a), so let us prove that $\rev \mathcal{L}(\lambda)$ is a linearization of $\rev P(\lambda)$.
To this aim, let us partition the $(t+1)n\times (t+1)n$ pencil $\lambda B+A$ as in \eqref{eq:partition}.
Then, the equation \eqref{eq:condition_new} implies:
\begin{itemize}
\item[\rm (i)] $M_{11}(\lambda) = -P$;
\item[\rm (ii)] $M_{12}(\lambda)(\Lambda_{t-1}(\lambda)\otimes I_n)  = \lambda^t P$,
\item[\rm (iii)]$(\Lambda_{t-1}(\lambda)^T\otimes I_n)M_{21}(\lambda)=\lambda^t P$, and
\item[\rm (iv)] $(\Lambda_{t-1}(\lambda)^T\otimes I_n)M_{22}(\lambda)(\Lambda_{t-1}(\lambda)\otimes I_n) = Q(\lambda)$.
\end{itemize}
Computing the 1-reversal, $t-$reversal, $t-$reversal and $(d-1)$-reversal, respectively, of both sides of the equations  (i), (ii), (iii) and (iv), it may be checked that 
\[
(\widetilde{\Lambda}_t(\lambda)^T\otimes I_n)(\lambda A+B)(\widetilde{\Lambda}_t(\lambda)\otimes I_n) = 
\begin{bmatrix}
-\lambda P & P \\
P & \rev_{d-1} Q(\lambda)
\end{bmatrix}
\]
holds.
This fact, together with part-(b) in Lemma \ref{lemma:skew-sym}, implies that the pencil $\rev \mathcal{L}(\lambda)$ is a linearization of $\rev P(\lambda)$.
Therefore, $\mathcal{L}(\lambda)$ is a strong linearization of $P(\lambda)$.
\end{proof}

\begin{remark}{\rm
Notice that the polynomial $P(\lambda)$ in Theorem \ref{thm:mild_conditions} has a nonsingular  leading coefficient $P$.
This is the reason why the pencils introduced in this section are said to be \emph{associated with matrix polynomials with nonsingular leading coefficients}.
}
\end{remark}

The next step is to show how to construct strong linearizations for a fixed matrix polynomial $P(\lambda)=\sum_{k=0}^d P_k\lambda^k \in\FF[\lambda]^{n\times n}$ with even degree and nonsingular leading coefficient from modified  block Kronecker pencils.
To this aim, we will write $P(\lambda)=\lambda^P P_d+Q(\lambda)$, where \begin{equation}\label{eq:Q}
Q(\lambda):=P(\lambda)-\lambda^dP_d = \lambda^{d-1}P_{d-1} + \cdots + \lambda P_1 +P_0.
\end{equation}
\begin{remark}\label{remark:QandP}
{\rm 
Notice that the matrix polynomials $Q(\lambda)$ and $P(\lambda)$ share the same structure that $P(\lambda)$ might posses, i.e., if $P(\lambda)$ is a symmetric or skew-symmetric matrix polynomial, then the matrix polynomial $Q(\lambda) = P(\lambda)-\lambda^dP_d$ is, respectively, symmetric or skew-symmetric.
} 
\end{remark}
From Theorem \ref{thm:mild_conditions} we obtain that the modified  block Kronecker pencil $\mathcal{L}(\lambda)$ in \eqref{eq:linearization_even} is a strong linearization of $P(\lambda)$ if  \eqref{eq:condition_new} holds with $Q(\lambda)$ as in \eqref{eq:Q} and $P=P_d$.
From this, and after some simple algebraic manipulations, we obtain in Corollary \ref{cor:givenPblockKron_even}  two equivalent conditions on the pencil $\lambda B+A$ that guarantee that  $\mathcal{L}(\lambda)$ is a strong linearization of $P(\lambda)$.
\begin{corollary} \label{cor:givenPblockKron_even}
Let $P(\lambda) = \sum_{k=0}^d P_k \lambda^k \in \FF[\lambda]^{n \times n}$ with even degree $d$, let $Q(\lambda) = \sum_{k=0}^{d-1} P_k \lambda^k \in \FF[\lambda]^{n \times n}$, let $\Lambda_{k}(\lambda)$, $\widehat{\Lambda}_{k}(\lambda)$ and $\Gamma_{k}(\lambda)$ be the matrix polynomials in \eqref{eq:Lambda}, \eqref{eq:Lambda2} and \eqref{eq:Gamma}, respectively, let $\mathcal{L}(\lambda)$ be the pencil in \eqref{eq:linearization_even} with $t=d/2$, and let us consider the pencil $\lambda B+A$ partitioned as in \eqref{eq:partition}.
Then, any of the following conditions guarantees that the pencil $\mathcal{L}(\lambda)$ is a strong linearization of $P(\lambda)$:
\begin{enumerate}
\item[\rm (a)] 
\begin{align*}
\begin{split}
&P_d=-M_{11}(\lambda), \\
&\lambda^t P_d = M_{12}(\lambda)(\Lambda_{t-1}(\lambda)\otimes I_n) = (\Lambda_{t-1}(\lambda)^T\otimes I_n)M_{21}(\lambda),\\
&Q(\lambda) = (\Lambda_{t-1}(\lambda)^T\otimes I_n)M_{22}(\lambda)(\Lambda_{t-1}(\lambda)\otimes I_n).
\end{split}
\end{align*}
\item[\rm (b)]
\begin{align}\label{eq:condition_M_even}
\begin{split}
P_d=&-M_{11}(\lambda), \\
\lambda^t P_d=&\su\left( M_{12}(\lambda)\odot \Lambda_{t-1}(\lambda)\otimes I_n \right)=\\
&\su\left( M_{21}(\lambda)\odot \Lambda_{t-1}(\lambda)^T\otimes I_n \right),  \\
Q(\lambda)=&\su\left( M_{22}(\lambda) \odot \Gamma_{t-1}(\lambda) \right).
\end{split}
\end{align}
\end{enumerate}
\end{corollary}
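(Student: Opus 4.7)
The plan is to deduce the corollary directly from Theorem~\ref{thm:mild_conditions} by specializing $P := P_d$ (nonsingular by the standing hypothesis of this section) and $Q(\lambda) := \sum_{k=0}^{d-1} P_k \lambda^k$. Since $d = 2t$, the polynomial $Q(\lambda)$ has grade exactly $2t-1$, and $P(\lambda) = \lambda^{2t} P_d + Q(\lambda)$ coincides with the matrix polynomial appearing in the conclusion of Theorem~\ref{thm:mild_conditions}. Hence it suffices to verify, for each of the two sets of hypotheses~(a) and~(b), that equation \eqref{eq:condition_new} holds with these specific choices of $P$ and $Q(\lambda)$.

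First I would carry out the block multiplication on the left-hand side of \eqref{eq:condition_new}. The crucial observation is the $2\times 2$ block structure
\[
\widehat{\Lambda}_t(\lambda)^T \otimes I_n \;=\;
\begin{bmatrix} I_n & 0 \\ 0 & \Lambda_{t-1}(\lambda)^T \otimes I_n \end{bmatrix},
\]
which is visible by inspection of \eqref{eq:Lambda2}. Combining this with the partition \eqref{eq:partition} of $\lambda B + A$ (already isolated in Remark~\ref{remark:partition}), a routine block multiplication shows that the left-hand side of \eqref{eq:condition_new} equals
\[
\begin{bmatrix}
M_{11}(\lambda) & M_{12}(\lambda)(\Lambda_{t-1}(\lambda)\otimes I_n) \\[2pt]
(\Lambda_{t-1}(\lambda)^T\otimes I_n)M_{21}(\lambda) & (\Lambda_{t-1}(\lambda)^T\otimes I_n)M_{22}(\lambda)(\Lambda_{t-1}(\lambda)\otimes I_n)
\end{bmatrix}.
\]
Equating this $2\times 2$ block matrix, entry by entry, with $\left[\begin{smallmatrix} -P_d & \lambda^t P_d \\ \lambda^t P_d & Q(\lambda) \end{smallmatrix}\right]$ yields exactly the four identities displayed in part~(a). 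This proves~(a).

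For part~(b) I would translate each of the non-trivial identities of part~(a) into the block Hadamard / sum-of-block-entries language. Writing $M_{12}(\lambda)$ as a block row with blocks $(M_{12})_j$ for $j=1,\dots,t$, the definition of the Kronecker product gives
\[
M_{12}(\lambda)(\Lambda_{t-1}(\lambda)\otimes I_n) \;=\; \sum_{j=1}^{t} \lambda^{t-j}(M_{12})_j,
\]
which is precisely $\su(M_{12}(\lambda) \odot (\Lambda_{t-1}(\lambda)\otimes I_n))$, and the analogous identity handles $M_{21}(\lambda)$. For $M_{22}(\lambda)$, viewed as a $t\times t$ array of $n\times n$ blocks $(M_{22})_{ij}$, one obtains
\[
(\Lambda_{t-1}(\lambda)^T\otimes I_n)\,M_{22}(\lambda)\,(\Lambda_{t-1}(\lambda)\otimes I_n)
= \sum_{i,j=1}^{t}\lambda^{2t-i-j}(M_{22})_{ij},
\]
which, by the block Hankel description of $\Gamma_{t-1}(\lambda)$ in \eqref{eq:Gamma}, is exactly $\su\!\left(M_{22}(\lambda) \odot \Gamma_{t-1}(\lambda)\right)$. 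Thus the three polynomial identities of (a) and the three Hadamard/sum identities of (b) are just two equivalent encodings of the same equations, and (b) implies (a) (and conversely).

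The proof is essentially bookkeeping; no serious obstacle arises once Theorem~\ref{thm:mild_conditions} is invoked. The main pitfall to watch is keeping the indexing consistent between $\widehat{\Lambda}_t$, $\Lambda_{t-1}$, and $\Gamma_{t-1}$, and making clear that the block Hadamard product used in (b) is the natural extension of Definition from Section~\ref{sec:basic} to rectangular block arrays with conformable $n\times n$ blocks; all the required identities then fall out from the definition of the Kronecker product.
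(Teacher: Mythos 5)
Your proposal follows exactly the route the paper takes: the paper introduces Corollary \ref{cor:givenPblockKron_even} with the remark that it follows from Theorem \ref{thm:mild_conditions} (applied with $P=P_d$ and $Q(\lambda)=P(\lambda)-\lambda^d P_d$) ``after some simple algebraic manipulations,'' and your block multiplication using the $2\times 2$ structure of $\widehat{\Lambda}_t(\lambda)^T\otimes I_n$ together with the partition \eqref{eq:partition} is precisely the computation that is left implicit. Your observation that condition (b) is just a rewriting of (a) via the block Hadamard/$\su$ notation (mildly extended to rectangular block arrays) is the correct reading of the paper's intent; the argument is sound.
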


The equations in \eqref{eq:condition_M_even} allow us to easily check if a modified  block Kronecker pencil is a strong linearization of $P(\lambda)$.
We illustrate this in the following example, where we show that  certain modified  block Kronecker pencils are strong linearizations for degree-6 matrix polynomials with nonsingular leading coefficients.
\begin{example}
Let $P(\lambda)=\sum_{k=0}^6 P_k\lambda^k\in\FF[\lambda]^{n\times n}$ with nonsingular leading coefficient, let $Q(\lambda)=\sum_{k=0}^5 P_k\lambda^k\in\FF[\lambda]^{n\times n}$ and let $\sigma\in\{-1,1\}$.
We are going to show that the modified  block Kronecker pencil
\[
\left[\begin{array}{cccc|cc}
-P_6 & \lambda P_6-P_5 & \lambda P_5 & 0 & 0 & 0 \\
0 & \lambda P_5 & \lambda P_4 & 0 & -I_n & 0 \\
P_4 & P_3 & 0 & P_1 & \lambda I_n & -I_n \\
-\lambda P_4 & 0 & \lambda P_2 & P_0 & 0 & \lambda I_n \\ \hline
0 & -\sigma I_n & \sigma \lambda I_n & 0 & 0 & 0 \\
0 & 0 & -\sigma I_n & \sigma \lambda I_n & 0 & 0
\end{array}\right]
\]
is a strong linearization of $P(\lambda)$.
Indeed, this follows from Corollary \ref{cor:givenPblockKron_even}(b) together with the following equalities
\begin{align*}
&\su\left( \begin{bmatrix}
\lambda P_6-P_5 & \lambda P_5 & 0 
\end{bmatrix} \odot \begin{bmatrix}
\lambda^2 I_n & \lambda I_n & I_n 
\end{bmatrix}\right) = \\
&\su\left( \begin{bmatrix}
\lambda^3 P_6-\lambda^2 P_5 & \lambda^2 P_5 & 0 
\end{bmatrix} \right) = \\
&\su\left( \begin{bmatrix}
\lambda P_6 \\ P_4 \\ -\lambda P_4 
\end{bmatrix} \odot \begin{bmatrix}
\lambda^2 I_n \\ \lambda I_n \\ I_n 
\end{bmatrix}\right) = 
\su\left( \begin{bmatrix}
\lambda^3 P_6 \\ \lambda  P_4 \\ -\lambda P_4 
\end{bmatrix} \right) = \lambda^3 P_6, 
\end{align*}
and
\begin{align*}
&\su\left( \begin{bmatrix} \lambda P_5 & \lambda P_4  & 0 \\
P_3 & 0 & P_1 \\
0 & \lambda P_2 & P_0 \end{bmatrix} \odot \begin{bmatrix}
\lambda^4 I_n & \lambda^3 I_n & \lambda^2 I_n \\
\lambda^3 I_n & \lambda^2 I_n & \lambda I_n \\
\lambda^2 I_n & \lambda I_n & I_n
\end{bmatrix} \right) = \\
&\su\left( \begin{bmatrix} \lambda^5 P_5 & \lambda^4 P_4  & 0 \\
\lambda^3 P_3 & 0 & \lambda P_1 \\
0 & \lambda^2 P_2 & P_0 \end{bmatrix} \right) = Q(\lambda),
\end{align*}
which are trivial to check.
\end{example}

So far, we showed  that strong linearizations  for an even-degree matrix polynomial $P(\lambda)$ can be obtained from modified block Kronecker pencils.
In the following subsection, we present a methodology for obtaining from this family of pencils linearizations that are symmetric or skew-symmetric whenever $P(\lambda)$ is, respectively, symmetric or skew-symmetric.

\subsection{Structure-preserving linearizations for even-degree symmetric or skew-symmetric matrix polynomials}\label{sec:structured-lin:even}

Let $P(\lambda)=\sum_{k=0}^dP_k\lambda^k\in\FF[\lambda]^{n\times n}$ be a symmetric or skew-symmetric even-degree matrix polynomial.
In this section, we show how to construct structure-preserving strong linearizations of $P(\lambda)$ when its leading and/or trailing coefficients (i.e., $P_d$ or $P_0$) are nonsingular.
We focus mainly on the case $P_d$ nonsingular. 
The case $P_0$ nonsingular is considered at the end as a corollary.

Modified  block Kronecker pencils \eqref{eq:linearization_even} are symmetric or skew-symmetric if and only if their pencils $\lambda B+A$ are, respectively, symmetric or skew-symmetric. 
Additionally, a modified block Kronecker pencil needs to satisfy the conditions on Corollary \ref{cor:givenPblockKron_even} in order to be a strong linearization for $P(\lambda)$.
For this reason,  our goal, now, is to obtain all the pencil
\begin{equation}\label{eq:linearization_even2}
\mathcal{L}(\lambda) = \left[\begin{array}{c|c|c}
-P_d & M_{12}(\lambda) & 0 \\ \hline
\sigma M_{12}(\lambda)^T & \phantom{\Big{(}} M_{22}(\lambda) \phantom{\Big{(}} & L_{t-1}(\lambda)^T \otimes I_n \\ \hline
0 & \sigma L_{t-1}(\lambda)\otimes I_n & 0
\end{array}\right], \quad \mbox{with }t=\frac{d}{2},
\end{equation}
where $\sigma=1$ corresponds to the symmetric case and $\sigma=-1$ to the skew-symmetric case, and where  $M_{12}(\lambda)\in\FF[\lambda]^{n\times tn}$ and $M_{22}(\lambda)\in\FF[\lambda]^{tn\times tn}$, satisfying the conditions
\[
\begin{split}
&\lambda^t P_d=\su\left( M_{12}(\lambda)\odot \Lambda_{t-1}(\lambda)\otimes I_n \right) \quad \mbox{and}\\
&Q(\lambda)=\su\left( M_{22}(\lambda) \odot \Gamma_{t-1}(\lambda) \right),
\end{split}
\]
or, equivalently,
\begin{align}
\label{eq:first_condition}&\lambda^t P_d = M_{12}(\lambda)(\Lambda_{t-1}(\lambda)\otimes I_n)  \quad \mbox{and}\\
\label{eq:second_condition}&Q(\lambda) = (\Lambda_{t-1}(\lambda)^T\otimes I_n)M_{22}(\lambda)(\Lambda_{t-1}(\lambda)\otimes I_n),
\end{align}
where the matrix polynomials $Q(\lambda)$, $\Lambda_k(\lambda)$ and $\Gamma_k(\lambda)$ have been defined in \eqref{eq:Q}, \eqref{eq:Lambda} and \eqref{eq:Gamma}, respectively.

Our first step, then, is Proposition \ref{proposition:solutionM12}, where we obtain all the pencil solutions $M_{12}(\lambda)$ of the equation \eqref{eq:first_condition}.
Since the proof of Proposition \ref{proposition:solutionM12} is very simple, it is omitted.
\begin{prop}\label{proposition:solutionM12}
Let $\Lambda_k(\lambda)$ be the matrix polynomial defined in \eqref{eq:Lambda}, let $P(\lambda)=\sum_{k=0}^dP_k\lambda^k\in\FF[\lambda]^{n\times n}$ be an even-degree matrix polynomial, set $t=d/2$, and let $M_{12}(\lambda)$ be a matrix pencil with size $n\times tn$.
Then, the solution $M_{12}(\lambda)$ of \eqref{eq:first_condition} is give by
\[
\begin{bmatrix}
\lambda P_d+W_1 & -\lambda W_1 + W_2 & -\lambda W_2 + W_3 & \cdots & \lambda W_{t-1}+W_t & -\lambda W_t
\end{bmatrix},
\]
where $W_1,\hdots,W_t$ are arbitrary $n\times n$ matrices.
\end{prop}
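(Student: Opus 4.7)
The plan is to translate the matrix equation \eqref{eq:first_condition} into a system of recursions on the block-pencil coefficients of $M_{12}(\lambda)$, which can be solved by direct substitution. Since $\Lambda_{t-1}(\lambda)\otimes I_n$ is the block column vector with $i$th block equal to $\lambda^{t-i}I_n$ (for $i=1,\dots,t$), the left-hand side of \eqref{eq:first_condition} is a simple sum once $M_{12}(\lambda)$ is split into its $t$ natural $n \times n$ block entries.

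First, I would partition
\[
M_{12}(\lambda) = \begin{bmatrix} X_1(\lambda) & X_2(\lambda) & \cdots & X_t(\lambda) \end{bmatrix},
\]
where each $X_i(\lambda) = \lambda A_i + B_i$ is an $n\times n$ pencil with unknown constant matrix coefficients $A_i, B_i \in \FF^{n\times n}$. Multiplying out, equation \eqref{eq:first_condition} becomes
\[
\sum_{i=1}^{t} \lambda^{t-i}(\lambda A_i + B_i) = \lambda^t P_d.
\]

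Next, I would match coefficients of each power of $\lambda$ on both sides. The coefficient of $\lambda^t$ gives $A_1 = P_d$; the coefficient of $\lambda^{t-k}$ for $k = 1, \dots, t-1$ gives $A_{k+1} + B_k = 0$; and the constant term gives $B_t = 0$. Thus the constant terms $B_1,\dots,B_{t-1}$ are completely free, while $A_1$ is forced to be $P_d$ and the remaining $A_{k+1}$ is determined by $A_{k+1} = -B_k$.

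Finally, I would set the free parameters $W_k := B_k$ for $k = 1,\dots,t-1$ (and note $W_t$ appears if one adopts the convention $B_t = 0$ so that $-\lambda W_t$ collapses appropriately, or else the proposition is understood with $W_t$ as a final free parameter that is redundant by the constraint $B_t=0$). Substituting back yields $X_1(\lambda) = \lambda P_d + W_1$, and $X_{k+1}(\lambda) = -\lambda W_k + W_{k+1}$ for the middle entries, giving precisely the claimed parametrized form. I do not anticipate any genuine obstacle: the argument is purely a coefficient-matching computation, and the only point requiring care is verifying that the indexing of the $W_k$'s in the statement matches the recursion (i.e., that the boundary terms $\lambda P_d+W_1$ and $-\lambda W_t$ are consistent with $A_1 = P_d$ and $B_t = 0$, respectively).
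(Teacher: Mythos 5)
Your proof is the direct coefficient-matching argument that the paper itself declares ``very simple'' and omits, so it is essentially the intended proof and it is correct. Your computation (in particular the boundary constraint $B_t=0$) also pins down what the statement must mean: the $t$ blocks are $\lambda P_d+W_1$, $-\lambda W_{i-1}+W_i$ for $i=2,\dots,t-1$, and $-\lambda W_{t-1}$, so the free parameters are $W_1,\dots,W_{t-1}$; the displayed formula in the proposition has an off-by-one indexing typo (an extra block $-\lambda W_t$ and a stray sign in $\lambda W_{t-1}+W_t$), and your hedging about ``$W_t$ collapsing'' could be replaced by simply stating that the correct parametrization ends at $W_{t-1}$.
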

Our second step is to solve \eqref{eq:second_condition} with a symmetric or skew-symmetric pencil  $M_{22}(\lambda)$, depending on whether the matrix polynomial $P(\lambda)$ is symmetric or skew-symmetric. 
Looking closely at this equation, we see that it is just \eqref{eq:condition} with $s=t-1$ and with $Q(\lambda)$ instead of $P(\lambda)$ on the left-hand-side. 
Since, according to Remark \ref{remark:QandP}, the polynomials $P(\lambda)$ and $Q(\lambda)$ share the same structure, the symmetric and skew-symmetric solutions of \eqref{eq:second_condition} may be obtained from Theorems \ref{thm:solving_sym} and \ref{thm:solving_skew-sym} with $Q(\lambda)$ instead of $P(\lambda)$.

\begin{remark}\label{remark:procedure}
{\rm The previous considerations implies the following procedure to construct a structure-preserving strong linearization of a symmetric or skew-symmetric even-degree matrix polynomial $P(\lambda)=\sum_{k=0}^d P_k\lambda^k\in\FF[\lambda]^{n\times n}$ with nonsingular leading coefficient. 
First, solve \eqref{eq:first_condition}, whose general solution is in Proposition \ref{proposition:solutionM12}, and, then, solve \eqref{eq:second_condition} with a symmetric or skew-symmetric matrix pencil $M_{22}(\lambda)$, depending on whether $P(\lambda)$ is symmetric or skew-symmetric.  
The solution of the latter equation may be obtained from Theorems \ref{thm:solving_sym} and \ref{thm:solving_skew-sym}.
Then, the pencil $\mathcal{L}(\lambda)$ in \eqref{eq:linearization_even2} is a structure-preserving strong linearization for $P(\lambda)$. }
\end{remark}

We illustrate the procedure outline in Remark \ref{remark:procedure} in the following example, where we obtain a structure-preserving linearization from a modified block Kronecker pencil that can be permuted into a block-tridiagonal pencil.
\begin{example}\label{ex:sym_even2}
Let $P(\lambda)=\sum_{k=0}^8P_k\lambda^k\in\FF[\lambda]^{n\times n}$ be a symmetric or skew-symmetric matrix polynomial with nonsingular leading coefficient.
The most simple solution of $\lambda^4 P_8 = M_{12}(\lambda)(\Lambda_{3}(\lambda)\otimes I_n)$ is $M_{12}(\lambda) =
\begin{bmatrix}
\lambda P_8 & 0 & \cdots & 0
\end{bmatrix}$.
Additionally, as we have seen in Examples \ref{ex:sym_odd2} and \ref{ex:skew-sym_odd2}, the most simple symmetric or skew-symmetric solution to $P(\lambda)-\lambda^8 P_8 = (\Lambda_{3}(\lambda)^T\otimes I_n)M_{22}(\lambda)(\Lambda_{3}(\lambda)\otimes I_n)$ is the block diagonal pencil 
$M_{22}(\lambda) = \diag(\lambda P_7+P_6,\lambda P_5+P_4,\lambda P_3+P_2,\lambda P_1+P_0)$.
Therefore, we get that the modified block Kronecker pencil $\mathcal{L}(\lambda)$ given by
\[
\left[\begin{array}{ccccc|ccc}
-P_8 & \lambda P_8 & 0 & 0 & 0 & 0 & 0 & 0 \\
\lambda P_8 & \lambda P_7+P_6 & 0 & 0 & 0 & -I_n & 0 & 0 \\
0 & 0 & \lambda P_5+P_4 & 0 & 0 & \lambda I_n & -I_n & 0 \\
0 & 0 & 0 & \lambda P_3+P_2 & 0 & 0 & \lambda I_n & -I_n \\
0 & 0 & 0 & 0 & \lambda P_1+P_0 & 0 & 0 & \lambda I_n \\ \hline
0 & -\sigma I_n & \sigma \lambda I_n & 0 & 0 & 0 & 0 & 0 \\
0 & 0 &  -\sigma I_n & \sigma \lambda I_n & 0 & 0 & 0 & 0 \\
0 & 0 & 0 & -\sigma I_n & \sigma \lambda I_n & 0 & 0 & 0
\end{array}\right]
\]
is a  strong linearization  that for $\sigma=1$ is symmetric whenever $P(\lambda)$ is symmetric, and for $\sigma=-1$ is skew-symmetric whenever $P(\lambda)$ is skew-symmetric.
Additionally, it is not difficult to check that there exists a permutation matrix $P$ such that $P^T\mathcal{L}(\lambda)P$ is equal to
\[
\begin{bmatrix}
-P_8 & \lambda P_8 & 0 & 0 & 0 & 0 & 0 & 0 \\
\lambda P_8 & \lambda P_7+P_6 & -I_n & 0 & 0 & 0 & 0 & 0 \\
0 & -\sigma I_n & 0 & \sigma \lambda I_n & 0 & 0 & 0 & 0 \\
0 & 0 & \lambda I_n & \lambda P_5+P_4 & -I_n & 0 & 0 & 0 \\
0 & 0 & 0 & -\sigma I_n & 0 & \sigma \lambda I_n & 0 & 0 \\
0 & 0 & 0 & 0 & \lambda I_n & \lambda P_3+P_2 & -I_n & 0 \\
0 & 0 & 0 & 0 & 0 & -\sigma I_n & 0 & \sigma \lambda I_n\\
0 & 0 & 0 & 0 & 0 & 0 & \lambda I_n & \lambda P_1+P_0
\end{bmatrix},
\]
which is a block-tridiagonal symmetric (if $\sigma=1$) or skew-symmetric (if $\sigma=-1$) companion form for, respectively, symmetric or skew-symmetric even-degree matrix polynomials with nonsingular leading coefficients.
These companion forms can be easily generalized for any even-degree matrix polynomials with nonsingular leading coefficient.
\end{example}

In the following example, we show that the symmetric companion forms of matrix polynomials with degree 4 and nonsingular leading coefficients in \cite[Example 5.8]{FPR1}, are, indeed, (up to a permutation) modified Kronecker pencils. 
\begin{example}
Let $P(\lambda)=\sum_{k=0}^4P_k\lambda^k\in\FF[\lambda]^{n\times n}$ be a symmetric matrix polynomial with nonsingular leading coefficient.
Let us consider the symmetric companion form $L_5(\lambda)$ in {\rm \cite[Example 5.8]{FPR1}}, i.e., the pencil
\[
L_5(\lambda) = \begin{bmatrix}
0 & 0 & -I_n & \lambda I_n \\
0 & -P_4 & \lambda P_4-P_3 & \lambda P_3 \\
-I_n & \lambda P_4-P_3 & \lambda P_3-P_2 & \lambda P_2 \\
\lambda I_n & \lambda P_3 & \lambda P_2 & \lambda P_1+P_0
\end{bmatrix}.
\]
Then, there exists a permutation matrix $P$ such that
\[
P^TL_5(\lambda)P = 
\left[\begin{array}{ccc|c}
-P_4 & \lambda P_4-P_3 & \lambda P_3 & 0 \\
\lambda P_4-P_3 & \lambda P_3-P_2 & \lambda P_2 & -I_n \\
\lambda P_3 & \lambda P_2 & \lambda P_1+P_0 & \lambda I_n \\ \hline
0 & -I_n & \lambda I_n & 0,
\end{array}\right],
\]
which is a modified symmetric block Kronecker pencil \eqref{eq:linearization_even2} with
\[
M_{12}(\lambda)=\begin{bmatrix}
\lambda P_4-P_3 & \lambda P_3 
\end{bmatrix} \quad \mbox{and} \quad
M_{22}(\lambda)=\begin{bmatrix}
\lambda P_3-P_2 & \lambda P_2 \\
\lambda P_2 & \lambda P_1+P_0 
\end{bmatrix}.
\]
Let us also consider the symmetric companion form $L_9(\lambda)$ in {\rm \cite[Example 5.8]{FPR1}}, i.e., the pencil
\[
L_7(\lambda)=
\begin{bmatrix}
-P_4 & 0 & \lambda P_4 & 0 \\
0 & 0 & -I_n & \lambda I_n \\
\lambda P_4 & I_n & \lambda P_3+P_2 & \lambda P_2 \\
0 & \lambda I_n & \lambda P_2 & \lambda P_1+P_0
\end{bmatrix},
\]
which is also obtained from a modified  block Kronecker pencil permuting some of its block rows and columns, i.e., there exists a permutation matrix $Q$ such that
\[
Q^TL_9(\lambda)Q=\left[\begin{array}{ccc|c}
-P_4 & \lambda P_4 & 0 & 0 \\
\lambda P_4 & \lambda P_3-P_2 & \lambda P_2 & -I_n \\
0 & \lambda P_2 & \lambda P_1+P_0 & \lambda I_n \\ \hline
0 & -I_n & \lambda I_n & 0
\end{array}\right].
\]
It is not difficult to show that the symmetric companion form $L_7(\lambda)$ in {\rm \cite[Example 5.8]{FPR1}} is also a permuted modified block Kronecker pencil.
\end{example} 

We finally consider the problem of constructing symmetric or skew-symmetric strong linearizations for $P(\lambda)$ when its trailing coefficient is nonsingular. 
The key tool is the following lemma, which is a particular case of \cite[Corollary 8.6]{Mobius} where the authors study the interaction between linearizations and M{\"o}bius transformations of matrix polynomials \footnote{the $\rev(\cdot)$ operation is a particular case of a M{\"o}bius transformation of a matrix polynomial \cite{Mobius}.}.
\begin{lemma}\label{lemma:sym_even}
Let $P(\lambda)$ be any  $n\times n$ matrix polynomial, and define $\widetilde{P}(\lambda):=\rev P(\lambda)$. 
Then, if $\widetilde{L}(\lambda)$ is any  strong linearization of $\widetilde{P}(\lambda)$, then $L(\lambda):=\rev \widetilde{L}(\lambda)$ is a strong linearization of $P(\lambda)$. 
\end{lemma}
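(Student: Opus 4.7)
The plan is to prove this by unpacking Definition \ref{def:linearization} and exploiting the essential symmetry of the strong linearization definition under the reversal operation. The definition of strong linearization is, in effect, a pair of conditions: $L(\lambda)$ is a strong linearization of $P(\lambda)$ iff both $L(\lambda)$ is a linearization of $P(\lambda)$ and $\rev L(\lambda)$ is a linearization of $\rev P(\lambda)$. Swapping $(L,P)$ with $(\rev L,\rev P)$ leaves this pair of conditions invariant (using $\rev\rev X = X$ at the appropriate grade), so the result should follow almost immediately.

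More concretely, first I would translate the hypothesis that $\widetilde{L}(\lambda)$ is a strong linearization of $\widetilde{P}(\lambda)=\rev P(\lambda)$ into two explicit statements using Definition \ref{def:linearization}:
\begin{itemize}
\item[(i)] there exist unimodular $U(\lambda),V(\lambda)$ with $U(\lambda)\widetilde{L}(\lambda)V(\lambda)=\diag(I_s,\widetilde{P}(\lambda))$;
\item[(ii)] there exist unimodular $\widehat{U}(\lambda),\widehat{V}(\lambda)$ with $\widehat{U}(\lambda)(\rev\widetilde{L}(\lambda))\widehat{V}(\lambda)=\diag(I_t,\rev\widetilde{P}(\lambda))$.
\end{itemize}
Then I would set $L(\lambda):=\rev\widetilde{L}(\lambda)$ and verify directly that $L(\lambda)$ satisfies the two corresponding conditions for being a strong linearization of $P(\lambda)$. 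The first condition, that $L(\lambda)$ is a linearization of $P(\lambda)$, is exactly (ii) combined with the identity $\rev\widetilde{P}(\lambda)=\rev\rev P(\lambda)=P(\lambda)$. The second condition, that $\rev L(\lambda)$ is a linearization of $\rev P(\lambda)=\widetilde{P}(\lambda)$, is exactly (i), because $\rev L(\lambda)=\rev\rev\widetilde{L}(\lambda)=\widetilde{L}(\lambda)$.

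The only subtle point, and the one I would be careful to get right, is the grade bookkeeping behind the identities $\rev\rev\widetilde{L}(\lambda)=\widetilde{L}(\lambda)$ and $\rev\rev P(\lambda)=P(\lambda)$. For the pencil $\widetilde{L}(\lambda)=\lambda\widetilde{B}+\widetilde{A}$ this is automatic since reversal as a pencil (grade $1$) simply swaps $\widetilde{A}$ and $\widetilde{B}$. For $P(\lambda)$ one must interpret $\widetilde{P}(\lambda)=\rev P(\lambda)$ as a polynomial with grade equal to $\deg P(\lambda)$ (the convention adopted throughout the paper, see the discussion after the definition of $\rev_k$), so that applying $\rev$ a second time indeed returns $P(\lambda)$. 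With that convention fixed, there is no further obstacle: the argument is a one-line definitional check, and no additional construction of unimodular factors is needed beyond those supplied by the hypothesis.
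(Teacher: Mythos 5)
Your proof is correct, but it takes a genuinely different route from the paper, which offers no proof at all: the paper simply cites \cite[Corollary 8.6]{Mobius}, observing (in a footnote) that $\rev(\cdot)$ is a particular case of a M\"obius transformation, so that the lemma is a special case of a general result on M\"obius transformations and linearizations. You instead give a direct, self-contained argument from Definition~\ref{def:linearization}, exploiting the manifest symmetry of the two-clause definition of strong linearization under the map $(L,P)\mapsto(\rev L,\rev P)$. Both approaches are valid; the paper's citation buys generality (the M\"obius result covers all transformations, not just reversal) at the cost of invoking external machinery, while your argument is more elementary and makes explicit exactly which identity ($\rev\rev=\mathrm{id}$) carries the proof. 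Your caution about grade bookkeeping is well placed, and it is in fact the only place where one could slip: the default convention in the paper sets grade equal to degree, which would \emph{not} recover $P$ from $\rev\rev P$ if $P_0=0$ lowers the degree of $\rev P$. The correct reading, which you state, is that $\widetilde{P}=\rev P$ must be regarded as a grade-$d$ polynomial (with $d=\deg P$), i.e.\ the reversal preserves grade; with that fixed, $\rev_d(\rev_d P)=P$ and $\rev_1(\rev_1\widetilde L)=\widetilde L$ hold, and the two conditions in the definition transpose cleanly exactly as you describe. You might sharpen the parenthetical remark slightly to say that this grade choice is dictated by the reversal map (and matches the M\"obius framework), rather than by the paper's stated default, but this does not affect the validity of the argument.
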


If a matrix polynomial $P(\lambda)$  with nonsingular trailing coefficient is  symmetric (skew-symmetric), then  $\rev P(\lambda)$ is a symmetric  (skew-symmetric) matrix polynomial with nonsingular leading coefficient. 
Thus, from any structure-preserving strong linearization for symmetric (skew-symmetric) matrix polynomials of even degree with nonsingular leading coefficients and Lemma \ref{lemma:sym_even}, we can obtain structure-preserving strong linearizations for symmetric (skew-symmetric) matrix polynomials of even degree with nonsingular trailing coefficients.
 We illustrate this in the following example.
 \begin{example}
 Let $P(\lambda)=\sum_{k=0}^8P_k\lambda^k\in\FF[\lambda]^{n\times n}$ be a symmetric or skew-symmetric matrix polynomial with nonsingular trailing coefficient.
 Then, $\rev P(\lambda) = \sum_{k=0}^8 P_{8-k}\lambda^k$ is, respectively, a symmetric or skew-symmetric matrix polynomial with nonsingular leading coefficient.
 Let us consider the strong linearizations in Example \ref{ex:sym_even2}, but for the matrix polynomial $\rev P(\lambda)$ instead of $P(\lambda)$.
Then, from Lemma \ref{lemma:sym_even}, we obtain that the pencils
\[
\left[\begin{array}{ccccc|ccc}
-\lambda P_0 & P_0 & 0 & 0 & 0 & 0 & 0 & 0 \\
P_0 & \lambda P_2+P_1 & 0 & 0 & 0 & -\lambda I_n & 0 & 0 \\
0 & 0 & \lambda P_4+P_3 & 0 & 0 & I_n & -\lambda I_n & 0 \\
0 & 0 & 0 & \lambda P_6+P_5 & 0 & 0 & I_n & -\lambda I_n \\
0 & 0 & 0 & 0 & \lambda P_8+P_7 & 0 & 0 & I_n \\ \hline 
0 & -\sigma \lambda I_n & \sigma I_n & 0 & 0 & 0 & 0 & 0 \\
0 & 0 & -\sigma \lambda I_n & \sigma I_n & 0 & 0 & 0 & 0 \\
0 & 0 & 0 & -\sigma \lambda I_n & \sigma I_n & 0 & 0 & 0
\end{array}\right]
\]
and
\[
\begin{bmatrix}
-\lambda P_0 & P_0 & 0 & 0 & 0 & 0 & 0 & 0 \\
P_0 & \lambda P_2+P_1 & -\lambda I_n & 0 & 0 & 0 & 0 & 0 \\
0 & -\sigma \lambda I_n & 0 & \sigma I_n & 0 & 0 & 0 & 0 \\
0 & 0 & I_n & \lambda P_4+P_3 & -\lambda I_n & 0 & 0 & 0 \\
0 & 0 & 0 & -\sigma \lambda I_n & 0 & \sigma I_n & 0 & 0 \\
0 & 0 & 0 & 0 & I_n & \lambda P_6+P_5 & -\lambda I_n & 0 \\
0 & 0 & 0 & 0 & 0 & -\sigma \lambda I_n & 0 & \sigma I_n\\
0 & 0 & 0 & 0 & 0 & 0 & I_n & \lambda P_8+P_7
\end{bmatrix}
\]
are symmetric ($\sigma=1$) or skew-symmetric ($\sigma=-1$) companion forms for, respectively, symmetric or skew-symmetric degree-8 matrix polynomials with nonsingular trailing coefficients.
 \end{example}

In the following example, we show that the symmetric companion forms of matrix polynomials with degree 4 and nonsingular trailing coefficients in \cite[Example 5.8]{FPR1}, are, indeed, the reversals of modified block Kronecker pencils of the reversal of the polynomial (up to some row and column sign changes and permutations). 
\begin{example}
Let $P(\lambda)=\sum_{k=0}^4P_k\lambda^k\in\FF[\lambda]^{n\times n}$ be a symmetric matrix polynomial with nonsingular trailing coefficient.
Let us consider the symmetric companion form $L_6(\lambda)$ in {\rm \cite[Example 5.8]{FPR1}}, i.e., the pencil
\[
L_6(\lambda)=
\begin{bmatrix}
0 & -I_n & \lambda I_n & 0 \\
-I_n & \lambda P_4-P_3 & \lambda P_3 & 0 \\
\lambda I_n & \lambda P_3 & \lambda P_2-P_1 & P_0 \\
0 & 0 & P_0 & -\lambda P_0
\end{bmatrix}.
\]
Changing the sign of the first block row and first block column and reversing the order of the block rows and block columns, we obtain the pencil
\[
\left[\begin{array}{ccc|c}
-\lambda P_0 & P_0 & 0 & 0 \\
P_0 & \lambda P_2-P_1 & \lambda P_3 & -\lambda I_n \\
0 & \lambda P_3 & \lambda P_4-P_3 & I_n \\ \hline
0 & -\lambda I_n & I_n & 0
\end{array}\right],
\]
which can be obtained applying Lemma \ref{lemma:sym_even} to the following modified  block Kronecker pencil
\[
\left[\begin{array}{ccc|c}
-P_0 & \lambda P_0 & 0 & 0 \\
P_0 & -\lambda P_1+P_2 & P_3 & -\lambda I_n \\
0 & P_3 & -\lambda P_3+P_4 & I_n \\ \hline
0 & -I_n & \lambda I_n & 0 
\end{array}\right],
\]
which is a strong linearization of $\rev P(\lambda)$.
It is not difficult to show that also the pencils $L_8(\lambda)$ and $L_{10}(\lambda)$ in {\rm \cite[Example 5.8]{FPR1}} are (up to some row and column permutation and sign changes)  obtained applying Lemma \ref{lemma:sym_even} to  modified block Kronecker pencils.
\end{example}

\section{Structure preserving linearizations for Hermitian or skew-Hermitian matrix polynomials}\label{sec:Hermitian}

For matrix polynomials over the field $\FF=\mathbb{C}$ one may consider Hermitian or skew-Hermitian matrix polynomials, i.e., matrix polynomials $P(\lambda)=\sum_{k=0}^d P_k\lambda^k\in\mathbb{C}[\lambda]^{n\times n}$ satisfying $P_i^*=\sigma P_i$ for $i=0,1,\hdots,d$, with $\sigma\in\{-1,1\}$ and where $*$ denotes conjugate transpose \cite{GoodVibrations}.
The problem of constructing  pencils
\[
\lambda \mathcal{L}(\lambda)=\lambda \mathcal{L}_1+\mathcal{L}_2 \quad \mbox{with} \quad \mathcal{L}_1^*=\sigma \mathcal{L}_1 \,\, \mbox{and}\,\, \mathcal{L}_0^*=\sigma \mathcal{L}_0
\]
that are Hermitian (skew-Hermitian) strong linearizations for a Hermitian (skew-Hermitian) matrix polynomial is also of interest 
and has been also considered in \cite{Hermitian1,Hermitian2,symmetric,SignChar}.
Fortunately, everything that we have done  in this paper for symmetric and skew-symmetric matrix polynomials works equally well for Hermitian and skew-Hermitian matrix polynomials just replacing the transpose operation $(\cdot)^T$ by the conjugate transpose operation $(\cdot)^*$, except the ones in $L_s(\lambda)^T\otimes I_n$ and $\widehat{L}_t(\lambda)^T\otimes I_n$ in the block Kronecker pencils in Definitions \ref{def:blockKronlin} and \ref{def:blockKronlin_even}.

\section{Eigenvectors, and minimal indices and bases recovery procedures}\label{sec:recovery}

In this section, we show how to recover the eigenvectors of a regular matrix polynomial from those of any of the linearizations constructed in Sections \ref{sec:Kronecker_pencils_odd} and \ref{sec:Kronecker_pencils_even}.
In addition, for singular matrix polynomials of odd degree, we also show how to recover  minimal indices and bases.
We will only focus on right eigenvectors (resp. right minimal bases and indices), since the sets of left and right eigenvectors (resp. the sets of right and left minimal bases and the sets of right and left minimal indices) of a regular (resp. singular) symmetric or skew-symmetric matrix polynomial coincide (see \cite[Theorem 7.7]{IndexSum} and \cite[Section 3]{singular}, for example).
Throughout Sections \ref{sec:recovery_odd} and \ref{sec:recovery_even}, we will partition any  column vector  of size $nd$ into $d$ column vectors $z_1,\hdots,z_d$  of size $n$.

\subsection{Recovery procedures for matrix polynomials with odd degree}\label{sec:recovery_odd}

In Theorem \ref{thm:recover_eig_odd} we show how to recover the eigenvectors associated with finite and infinite eigenvalues of an odd-degree regular matrix polynomial  from any of its strong linearizations obtained from block Kronecker pencils.
\begin{theorem}\label{thm:recover_eig_odd}
Let $P(\lambda)=\sum_{k=0}^d P_k\lambda^k \in\FF[\lambda]^{n\times n}$ be regular odd-degree matrix polynomial. Let $\mathcal{L}(\lambda)$ be a strong linearization of $P(\lambda)$ obtained from a block Kronecker pencil \eqref{eq:linearization_general} with $s=(d-1)/2$.
Then the following statements hold.
\begin{enumerate}
\item[{\rm (a)}] If $z\in\mathbb{F}^{nd\times 1}$ is a right eigenvector of $\mathcal{L}(\lambda)$ with finite eigenvalue $\lambda_0$, then the $(s+1)$th block of $z$ is a right eigenvector of $P(\lambda)$ with finite eigenvalue $\lambda_0$.
\item[{\rm (b)}] If $z\in\mathbb{F}^{nd\times 1}$ is a right eigenvector of $\mathcal{L}(\lambda)$ for the eigenvalue $\infty$, then the first block of $z$ is a right eigenvector of $P(\lambda)$ for the eigenvalue $\infty$.
\end{enumerate}
\end{theorem}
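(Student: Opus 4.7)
The plan is to reduce both parts to Lemma~\ref{lemma:aux}(a), first applied to $\mathcal{L}(\lambda)$ itself for part (a), and then to $\rev\mathcal{L}(\lambda)$ for part (b).

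For part~(a), the pencil $\mathcal{L}(\lambda)$ is a strong block minimal bases pencil with $K_1(\lambda)=\sigma L_s(\lambda)\otimes I_n$; as noted in Example~\ref{ex-L-Lamb} (and observing that the scalar $\sigma$ plays no role in the defining relation $K_1 N_1^T = 0$), a dual minimal basis is $N_1(\lambda)=\Lambda_s(\lambda)^T\otimes I_n$. Theorem~\ref{thm:strong} identifies the polynomial strongly linearized by $\mathcal{L}(\lambda)$ as exactly the given $P(\lambda)$. Lemma~\ref{lemma:aux}(a) then yields that every right eigenvector $z$ of $\mathcal{L}(\lambda)$ with finite eigenvalue $\lambda_0$ has the form
\[
z = \begin{bmatrix} \Lambda_s(\lambda_0)\otimes I_n \\ * \end{bmatrix} x,
\]
for some right eigenvector $x$ of $P(\lambda)$ with eigenvalue $\lambda_0$. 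Since $\Lambda_s(\lambda_0)\otimes I_n$ is the column of $(s+1)$ blocks $\lambda_0^s I_n,\lambda_0^{s-1}I_n,\ldots,\lambda_0 I_n,I_n$, the first $s+1$ blocks of $z$ are $\lambda_0^s x,\lambda_0^{s-1}x,\ldots,\lambda_0 x,x$, and in particular the $(s+1)$th block equals $x$.

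For part~(b), the plan is to pass to the reversal. By Lemma~\ref{lemma:rev} applied both to $\mathcal{L}(\lambda)$ and to $P(\lambda)$, right eigenvectors for the eigenvalue $\infty$ coincide (as vectors) with right eigenvectors of $\rev\mathcal{L}(\lambda)$ and $\rev P(\lambda)$ for the eigenvalue $0$. The pencil $\rev\mathcal{L}(\lambda)=\lambda A + B$ has the same block structure as $\mathcal{L}(\lambda)$ but with $L_s(\lambda)$ replaced by $\rev_1 L_s(\lambda)$. A direct application of Theorem~\ref{thm:minimal_basis} shows that $\rev_1 L_s(\lambda)\otimes I_n$ is a minimal basis with all row degrees equal to $1$, and solving $\rev_1 L_s(\lambda) v(\lambda) = 0$ gives $v(\lambda) = [1,\lambda,\ldots,\lambda^s]^T=\rev_s\Lambda_s(\lambda)$, so that $\widetilde{N}_1(\lambda):=\rev_s\Lambda_s(\lambda)^T\otimes I_n$ is a dual minimal basis with constant row degree $s$. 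Hence $\rev\mathcal{L}(\lambda)$ is again a strong block minimal bases pencil, and a short substitution using $\lambda^s\Lambda_s(\lambda^{-1})=\rev_s\Lambda_s(\lambda)$ confirms that the polynomial it strongly linearizes is exactly $\rev P(\lambda)$. Applying Lemma~\ref{lemma:aux}(a) at the eigenvalue $0$, and using that $\widetilde{N}_1(0)^T$ is the column whose first block is $I_n$ and whose remaining blocks are zero, we obtain that every right eigenvector of $\rev\mathcal{L}(\lambda)$ for $0$ has its first block equal to a right eigenvector $x$ of $\rev P(\lambda)$ for $0$; this first block of $z$ is then the desired right eigenvector of $P(\lambda)$ for $\infty$.

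The only potentially delicate step is verifying that $\rev\mathcal{L}(\lambda)$ retains the strong block minimal bases pencil structure and identifying its dual minimal basis, but both reduce to elementary manipulations with $\rev_1 L_s(\lambda)$ and a straightforward application of Theorem~\ref{thm:minimal_basis}; no serious obstacle is expected.
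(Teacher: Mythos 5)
Your proposal is correct and follows essentially the same route as the paper: part~(a) via Lemma~\ref{lemma:aux}(a) applied directly to $\mathcal{L}(\lambda)$ and reading off the $(s+1)$th block of $\Lambda_s(\lambda_0)\otimes I_n$, and part~(b) via the reversal $\rev\mathcal{L}(\lambda)$, identifying $\rev\Lambda_s(\lambda)^T\otimes I_n$ as the dual minimal basis and evaluating at $\lambda_0=0$. The only difference is that you spell out the verification that $\rev\mathcal{L}(\lambda)$ is a strong block minimal bases pencil strongly linearizing $\rev P(\lambda)$, which the paper states as an immediate consequence of Theorem~\ref{thm:blockminlin}.
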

\begin{proof}
The results follow immediately from Lemma \ref{lemma:aux}(a).
If $z$ is a right eigenvector of $\mathcal{L}(\lambda)$ with finite eigenvalue $\lambda_0$, then it is of the form
\[
z = \begin{bmatrix}
\Lambda_s(\lambda_0)\otimes I_n \\ * 
\end{bmatrix}x,
\]
for some right eigenvector $x$ of $P(\lambda)$ with eigenvalue $\lambda_0$, and  where by ``$*$'' we denote some matrix polynomial on $\lambda_0$ that is not relevant in the argument.
Then, part (a) follows from the fact that the $(s+1)$th block of $\Lambda_s(\lambda_0)\otimes I_n$ is just the identity matrix $I_n$.

In order to prove part (b), recall  that the eigenvectors of the pencil $\mathcal{L}(\lambda)$ (resp. $P(\lambda)$) corresponding to the eigenvalue $\infty$ are those of $\rev \mathcal{L}(\lambda)$ (resp. $\rev P(\lambda)$) corresponding to the eigenvalue $0$. 
As a consequence of Theorem \ref{thm:blockminlin}, the pencil $\rev \mathcal{L}(\lambda)$ is a strong minimal bases pencil (with $N_1(\lambda)=N_2(\lambda)=\rev \Lambda_s(\lambda)^T\otimes I_n$) that is a strong linearization of $\rev P(\lambda)$.
Therefore, if $z$ is a right eigenvector of $\rev\mathcal{L}(\lambda)$ with eigenvalue $\lambda_0=0$, then, applying again Lemma \ref{lemma:aux}(a) to $\rev\mathcal{L}(\lambda)$, it is of the form
\[
z = \begin{bmatrix}
\rev \Lambda_s(0)\otimes I_n \\ * 
\end{bmatrix}x,
\]
for some right eigenvector $x$ of $\rev P(\lambda)$ with eigenvalue $\lambda_0=0$, and  where by ``$*$'' we denote some matrices that are not relevant in the argument.
Then, part (b) follows from the fact that the first block of $\rev \Lambda_s(0)\otimes I_n$ is just the identity matrix $I_n$.
\end{proof}

The recovery of the minimal indices of a singular matrix polynomial $P(\lambda)$ from those of a strong linearization is, in general, a nontrivial task.
Theorem \ref{thm:recover_singular} shows how to recover minimal bases and indices of a symmetric or a skew-symmetric matrix polynomial from any of its structure-preserving linearizations obtained from  block Kronecker pencils.
\begin{theorem}\label{thm:recover_singular} 
Let $P(\lambda)=\sum_{k=0}^d P_k\lambda^k \in\FF[\lambda]^{n\times n}$ be a singular odd-degree matrix polynomial. Let $\mathcal{L}(\lambda)$ be a strong linearization of $P(\lambda)$ obtained from a block Kronecker pencil \eqref{eq:linearization_general} with $s=(d-1)/2$.
Then the following statements hold.
\begin{enumerate}
\item[{\rm (a)}] If $\{z_1(\lambda),z_2(\lambda), \hdots, z_p(\lambda)\}$ is any right minimal basis of $\mathcal{L}(\lambda)$ and if $x_j(\lambda)$ is the $(s+1)th$ block of $z_j(\lambda)$, for $j=1,2,\hdots,p$, then $\{x_1(\lambda),x_2(\lambda),\hdots, \allowbreak x_p(\lambda)\}$ is a right minimal basis of $P(\lambda)$.
\item[\rm (b)] If $0\leq \epsilon_1\leq \epsilon_2 \leq \cdots \leq \epsilon_p$ are the right minimal indices of $P(\lambda)$, then
\[
0\leq \epsilon_1+s \leq \epsilon_2+s\leq \cdots \leq \epsilon_p+s
\]
are the right minimal indices of $\mathcal{L}(\lambda)$.
\end{enumerate}
\begin{proof}
Part (a) is an immediate consequence of Lemma \ref{lemma:aux}(b1), the fact that the pencil $\mathcal{L}(\lambda)$ is a strong minimal bases pencil with $N_1(\lambda)=N_2(\lambda)=\Lambda_s(\lambda)\otimes I_s$, and the fact that the $(s+1)$th block of $\Lambda_s(\lambda)\otimes I_s$ is the identity matrix $I_n$.
Part (b) follows immediately from  Lemma \ref{lemma:aux}(b2) combined with the fact $\deg(\Lambda_s(\lambda)\otimes I_s)=s$.
\end{proof}
\end{theorem}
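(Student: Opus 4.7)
The plan is to deduce both parts directly from Lemma \ref{lemma:aux}, which is designed precisely for this kind of recovery question. The key preliminary step is to recognize that the block Kronecker pencil $\mathcal{L}(\lambda)$ in \eqref{eq:linearization_general} is a strong block minimal bases pencil in the sense of Definition \ref{def:minlinearizations}: its lower-left block $\sigma L_s(\lambda)\otimes I_n$ (which plays the role of $K_1(\lambda)$) admits $N_1(\lambda)=\Lambda_s(\lambda)\otimes I_n$ as a dual minimal basis, as noted in Example \ref{ex-L-Lamb} (the overall sign $\sigma\in\{-1,1\}$ does not affect either the minimal basis property of $K_1(\lambda)$ or the duality relation $K_1(\lambda)N_1(\lambda)^T=0$). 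The strong block minimal bases property was already used implicitly in Theorem \ref{thm:strong} to conclude that $\mathcal{L}(\lambda)$ is a strong linearization of the polynomial in \eqref{eq:condition}, so nothing more needs to be checked here.

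For part (a), I would apply Lemma \ref{lemma:aux}(b1) to obtain that every right minimal basis of $\mathcal{L}(\lambda)$ has the form
\[
\left\{\begin{bmatrix}\Lambda_s(\lambda)\otimes I_n \\ * \end{bmatrix}h_j(\lambda)\right\}_{j=1}^p
\]
for some right minimal basis $\{h_1(\lambda),\ldots,h_p(\lambda)\}$ of $P(\lambda)$. Inspecting the explicit form of $\Lambda_s(\lambda)$ in \eqref{eq:Lambda}, the last (i.e., $(s+1)$th) entry of $\Lambda_s(\lambda)^T$ is the constant $1$, so the $(s+1)$th $n\times n$ block of $\Lambda_s(\lambda)\otimes I_n$ is $I_n$. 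Consequently, the $(s+1)$th block $x_j(\lambda)$ of $z_j(\lambda)$ coincides with $h_j(\lambda)$, and $\{x_1,\ldots,x_p\}$ is a right minimal basis of $P(\lambda)$.

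For part (b), I would invoke Lemma \ref{lemma:aux}(b2), which states that the right minimal indices of $\mathcal{L}(\lambda)$ are exactly $\epsilon_j+\deg(N_1(\lambda))$ for $j=1,\ldots,p$. Since $N_1(\lambda)=\Lambda_s(\lambda)\otimes I_n$ has all row degrees equal to $s$, so $\deg(N_1(\lambda))=s$, the claimed shift $\epsilon_j\mapsto \epsilon_j+s$ follows immediately. The proof is conceptually short because all heavy lifting has been absorbed into the general theory of minimal bases pencils; the only potential obstacle is the bookkeeping that confirms $\mathcal{L}(\lambda)$ fits the hypotheses of Lemma \ref{lemma:aux} with the stated $N_1(\lambda)$, and this is settled by Example \ref{ex-L-Lamb} together with the definition of block Kronecker pencils.
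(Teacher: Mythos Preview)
Your proof is correct and follows essentially the same route as the paper's own proof: both parts are obtained directly from Lemma \ref{lemma:aux}(b1) and (b2) after identifying $N_1(\lambda)=\Lambda_s(\lambda)\otimes I_n$ as the dual minimal basis and observing that its $(s+1)$th block is $I_n$ and its degree is $s$. Your added remarks about the irrelevance of the sign $\sigma$ and the reference to Example \ref{ex-L-Lamb} are helpful clarifications but do not change the argument.
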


\subsection{Recovery procedures for matrix polynomials with even degree}\label{sec:recovery_even}

In Theorems \ref{thm:recover_eig_even1} and \ref{thm:recover_eig_even2} we show how to recover the eigenvectors associated with finite and infinite eigenvalues of an even-degree regular matrix polynomial with nonsingular leading or trailing coefficient  from any of its strong linearizations obtained from modified  block Kronecker pencils.

Modified  block Kronecker pencils are minimal bases pencils but not strong minimal bases pencils.
This means that we can not use the results in Lemma \ref{lemma:aux}, as we have done in the previous section.
For this reason, we start with Lemmas \ref{lemma:recover_aux1} and \ref{lemma:recover_aux2}, where the following matrix polynomial
\begin{equation}
\label{eq:Nhat}
\widehat{N}_k(\lambda):=
\begin{bmatrix}
0 & 1 & & & & 0 \\
\vdots & \lambda & 1 & & & \vdots \\
\vdots & \vdots & \ddots & \ddots & & \vdots \\
0 & \lambda^{t-2} & \cdots & \lambda & 1 & 0 \\
\end{bmatrix}\otimes I_n \in\FF[\lambda]^{(t-1)n\times (t+1)n}
\end{equation}
plays an important role.

Lemma \ref{lemma:recover_aux1} shows that modified  block Kronecker pencils  admit simple one-sided factorizations.
\begin{lemma}\label{lemma:recover_aux1}
Let $P(\lambda)=\sum_{k=0}^d P_k\lambda^k \in\FF[\lambda]^{n\times n}$ be an even-degree matrix polynomial. Let $\mathcal{L}(\lambda)$ be a modified block Kronecker pencil as in \eqref{eq:linearization_even2} satisfying \eqref{eq:first_condition} and \eqref{eq:second_condition}. Let $\Lambda_k(\lambda)$ and $\widehat{N}_k(\lambda)$ be the matrix polynomials in \eqref{eq:Lambda} and \eqref{eq:Nhat}, respectively.
Then,
\begin{equation}\label{eq:right_fact}
\mathcal{L}(\lambda)
\begin{bmatrix}
\Lambda_t(\lambda)\otimes I_n \\
\widehat{N}_t(\lambda)(\lambda B+A)(\Lambda_t(\lambda)\otimes I_n)
\end{bmatrix} =
e_{t+1}\otimes P(\lambda),
\end{equation}
where $e_\ell$ denotes the $\ell$th column of the identity matrix $I_d$.
\end{lemma}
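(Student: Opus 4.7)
The plan is to verify the identity by direct block multiplication, exploiting the three-block-row partition of $\mathcal{L}(\lambda)$ from Remark 4.2 and the staircase structure of $\widehat{N}_t(\lambda)$. Splitting the input vector into three pieces of sizes $n$, $tn$, $(t-1)n$ gives $[\lambda^t I_n;\ \Lambda_{t-1}(\lambda)\otimes I_n;\ w(\lambda)]$ with $w(\lambda) := \widehat{N}_t(\lambda)(\lambda B+A)(\Lambda_t(\lambda)\otimes I_n)$. Two of the three block rows of $\mathcal{L}(\lambda)[\cdot]$ are handled immediately: the top row yields $-\lambda^t P_d + M_{12}(\lambda)(\Lambda_{t-1}\otimes I_n) = 0$ by \eqref{eq:first_condition}, and the bottom row yields $\sigma(L_{t-1}(\lambda)\Lambda_{t-1}(\lambda))\otimes I_n = 0$ via the duality identity from Example 2.11. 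These match the zeros of $e_{t+1}\otimes P(\lambda)$ in all blocks other than position $t+1$.

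All the substance is in the middle block row. Let $u(\lambda) := \sigma\lambda^t M_{12}(\lambda)^T + M_{22}(\lambda)(\Lambda_{t-1}(\lambda)\otimes I_n)$ be the lower $tn\times n$ piece of $(\lambda B+A)(\Lambda_t\otimes I_n)$, so that the middle row equals $u(\lambda) + (L_{t-1}(\lambda)^T\otimes I_n)w(\lambda)$. Writing $u=[u_1;\dots;u_t]$ and $w=[w_1;\dots;w_{t-1}]$ in $n\times n$ blocks and exploiting the vanishing first block column and lower-triangular Toeplitz shape of $\widehat{N}_t$, one obtains the block recursion $w_k = \lambda w_{k-1} + u_k$ (with $w_0=0$). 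The bidiagonal form of $L_{t-1}^T$ then makes the $k$-th block of $(L_{t-1}^T\otimes I_n)w$ equal to $\lambda w_{k-1}-w_k$ for $k<t$ and $\lambda w_{t-1}$ for $k=t$. Adding to $u_k$ block by block, the recursion telescopes: the first $t-1$ blocks cancel, and the last one becomes $u_t + \lambda w_{t-1} = \sum_{i=1}^t \lambda^{t-i}u_i = (\Lambda_{t-1}(\lambda)^T\otimes I_n)u(\lambda)$. Equivalently, one can package this cancellation in the scalar identity $L_{t-1}(\lambda)^T\widehat{N}_t(\lambda) + [\,0\ |\ I_t\,] = e_t\,[\,0\ \Lambda_{t-1}(\lambda)^T\,]$, checked by inspection.

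It remains to evaluate $(\Lambda_{t-1}^T\otimes I_n)u(\lambda)$. Substituting $u$ and using \eqref{eq:first_condition} transposed, namely $(\Lambda_{t-1}^T\otimes I_n)M_{12}^T = \lambda^t P_d^T$, together with \eqref{eq:second_condition}, gives
\[
(\Lambda_{t-1}(\lambda)^T\otimes I_n)u(\lambda) = \sigma\lambda^{2t}P_d^T + Q(\lambda) = \lambda^{2t}P_d + Q(\lambda) = P(\lambda),
\]
where $P_d^T = \sigma P_d$ is the symmetry or skew-symmetry of the leading coefficient inherent to the structured pencil \eqref{eq:linearization_even2} (precisely the consistency condition that forces $M_{12}$ and $\sigma M_{12}^T$ to represent the same coefficient $\lambda^t P_d$). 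Placed at the end of the middle block row, this $P(\lambda)$ sits in global block position $t+1$ out of $2t$, so the whole product equals $e_{t+1}\otimes P(\lambda)$. The one point requiring care is aligning the three block-indexing conventions (the three-row split of $\mathcal{L}$, the sub-block indices within each piece, and the global position $t+1$ in the output); once the telescoping afforded by $\widehat{N}_t$ is recognised, the rest is routine bookkeeping.
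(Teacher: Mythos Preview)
Your proof is correct and follows the same approach as the paper, which simply states that the identity ``follows from a direct matrix multiplication'' using \eqref{eq:first_condition} and \eqref{eq:second_condition}; you have supplied those details, including the telescoping cancellation via the recursion $w_k=\lambda w_{k-1}+u_k$ (equivalently your scalar identity $L_{t-1}(\lambda)^T\widehat{N}_t(\lambda)+[\,0\mid I_t\,]=e_t[\,0\ \Lambda_{t-1}(\lambda)^T\,]$). Your explicit flagging of the needed relation $\sigma P_d^T=P_d$ is appropriate: the lemma refers to the structured pencil \eqref{eq:linearization_even2} introduced in Section~\ref{sec:structured-lin:even}, where $P(\lambda)$ is assumed symmetric ($\sigma=1$) or skew-symmetric ($\sigma=-1$), so this hypothesis is implicit in the statement.
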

\begin{proof}
Using \eqref{eq:first_condition} and \eqref{eq:second_condition}, the equality \eqref{eq:right_fact} follows from  a direct matrix multiplication.
\end{proof}

Lemma \ref{lemma:recover_aux2} relates any right eigenvector of a linearization obtained from a modified  block Kronecker pencil for an even-degree matrix polynomial $P(\lambda)$ with a right eigenvector of $P(\lambda)$.
\begin{lemma}\label{lemma:recover_aux2}
Let $P(\lambda)=\sum_{k=0}^d P_k\lambda^k \in\FF[\lambda]^{n\times n}$ be a regular even-degree matrix polynomial with nonsingular leading coefficient. Let $\mathcal{L}(\lambda)$ be a strong linearization of $P(\lambda)$ obtained from a modified block Kronecker pencil \eqref{eq:linearization_even} with $t=d/2$.
Then, any right eigenvector $z$ of $\mathcal{L}(\lambda)$ with finite eigenvalue $\lambda_0$ has the form
\[
\begin{bmatrix}
\Lambda_t(\lambda_0)\otimes I_n \\
\widehat{N}_t(\lambda_0)(\lambda_0 B+A)(\Lambda_t(\lambda_0)\otimes I_n)
\end{bmatrix}x
\]
for some right eigenvector $x$ of $P(\lambda)$ with finite eigenvalue $\lambda_0$. 
\end{lemma}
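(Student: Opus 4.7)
My strategy is to exhibit an explicit linear map $\phi$ from $\ker P(\lambda_0)$ into $\ker \mathcal{L}(\lambda_0)$ using the one-sided factorization of Lemma \ref{lemma:recover_aux1}, show it is injective, and then invoke the preservation of local Smith form by strong linearizations to upgrade injectivity to bijectivity by a dimension count.

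First, I would evaluate the identity \eqref{eq:right_fact} of Lemma \ref{lemma:recover_aux1} at $\lambda = \lambda_0$ and right-multiply by any $x \in \ker P(\lambda_0)$. Since $P(\lambda_0)x = 0$, the right-hand side vanishes, so the vector
\[
\phi(x) := \begin{bmatrix} \Lambda_t(\lambda_0)\otimes I_n \\ \widehat{N}_t(\lambda_0)(\lambda_0 B+A)(\Lambda_t(\lambda_0)\otimes I_n) \end{bmatrix} x
\]
lies in $\ker \mathcal{L}(\lambda_0)$. Thus $\phi : \ker P(\lambda_0) \to \ker \mathcal{L}(\lambda_0)$ is a well-defined linear map.

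Next, I would observe that $\phi$ is injective. Indeed, the bottom $n\times n$ subblock of $\Lambda_t(\lambda_0)\otimes I_n$ equals $I_n$ (because the last entry of $\Lambda_t(\lambda_0)^T$ is $1$), so the $(t+1)$th block of $\phi(x)$ is exactly $x$. Hence $\phi(x) = 0$ forces $x = 0$.

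The key remaining ingredient is the dimension equality $\dim \ker P(\lambda_0) = \dim \ker \mathcal{L}(\lambda_0)$. Because $\mathcal{L}(\lambda)$ is a (strong) linearization of $P(\lambda)$, it is unimodularly equivalent to $\mathrm{diag}(I_s, P(\lambda))$, and unimodular equivalence preserves the local Smith form (hence the nullity) at every $\lambda_0 \in \overline{\FF}$. With regularity of $P(\lambda)$ guaranteeing finite-dimensional kernels, $\phi$ is an injective linear map between spaces of equal finite dimension, so it is an isomorphism. Therefore any right eigenvector $z$ of $\mathcal{L}(\lambda)$ with finite eigenvalue $\lambda_0$ equals $\phi(x)$ for a unique $x \in \ker P(\lambda_0)$, which is nonzero since $z$ is nonzero, and hence is a genuine right eigenvector of $P(\lambda)$ with eigenvalue $\lambda_0$. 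I do not expect any serious obstacle: the nonsingular-leading-coefficient hypothesis enters only through ensuring that $\mathcal{L}(\lambda)$ is indeed a strong linearization (via Theorem \ref{thm:mild_conditions}), which is already assumed in the statement; the main point to be careful with is the choice of the $(t+1)$th block (rather than the first) to witness injectivity of $\phi$.
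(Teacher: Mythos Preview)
Your proposal is correct and follows essentially the same approach as the paper's own proof: both use the one-sided factorization from Lemma~\ref{lemma:recover_aux1} to build a linear map from $\ker P(\lambda_0)$ into $\ker \mathcal{L}(\lambda_0)$, establish injectivity by reading off the $(t+1)$th block (where $\Lambda_t(\lambda_0)\otimes I_n$ contributes $I_n$), and then invoke the equality of null-space dimensions coming from $\mathcal{L}(\lambda)$ being a linearization of $P(\lambda)$. The only cosmetic difference is that the paper works with an explicit basis $\{x_1,\dots,x_p\}$ of $\ker P(\lambda_0)$ and its image, whereas you phrase the same argument more abstractly via the map $\phi$.
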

\begin{proof}
Let us denote by $\mathcal{N}_r(\mathcal{L}(\lambda_0))$ and $\mathcal{N}_r(P(\lambda_0))$, respectively, the right null space of the matrices $\mathcal{L}(\lambda_0)$ and  $P(\lambda_0)$.
From Lemma \ref{lemma:recover_aux1} we get that $x\in\mathcal{N}_r(P(\lambda_0))$ if and only if
\[
\begin{bmatrix}
\Lambda_t(\lambda_0)\otimes I_n \\
\widehat{N}_t(\lambda_0)(\lambda_0 B+A)(\Lambda_t(\lambda_0)\otimes I_n)
\end{bmatrix}x\in\mathcal{N}_r(\mathcal{L}(\lambda_0)).
\]
Let $\{x_1,\hdots,x_p\}$ be a linear independent basis of $\mathcal{N}_r(P(\lambda_0))$, and let us set
\[
z_i:=\begin{bmatrix}
\Lambda_t(\lambda_0)\otimes I_n \\
\widehat{N}_t(\lambda_0)(\lambda_0 B+A)(\Lambda_t(\lambda_0)\otimes I_n)
\end{bmatrix}x_i\in\mathcal{N}_r(\mathcal{L}(\lambda_0)).
\]
We claim that $\{z_1,\hdots,z_p\}$ is a linear independent basis of $\mathcal{N}_r(\mathcal{L}(\lambda_0))$.
Indeed, let $c_1,\hdots,c_p$ be constants such that $c_1z_1+\cdots+c_p z_p=0$. 
From the $(t+1)$th block entry of the previous equation we obtain $c_1x_1+\cdots+c_p x_p=0$ which implies $c_1=\cdots=c_p=0$.
Thus, the set $\{z_1,\hdots,z_p\}$ is a linearly independent set such that ${\rm span}\,\{z_1,\hdots,z_p\}\subseteq \mathcal{N}_r(\mathcal{L}(\lambda_0))$.
But notice that $\dim(\mathcal{N}_r(P(\lambda_0))) = \dim(\mathcal{N}_r(\mathcal{L}(\lambda_0)))$, since $\mathcal{L}(\lambda)$ is a linearization of $P(\lambda)$.
Therefore, $\{z_1,\hdots,z_p\}$ is a linear independent basis of $\mathcal{N}_r(\mathcal{L}(\lambda_0))$.
Finally, let $0\neq z\in \mathcal{N}_r(\mathcal{L}(\lambda_0))$.
By the previous considerations we get
\begin{align*}
z = &\sum_{i=1}^p \alpha_i z_i = 
\sum_{i=1}^p \alpha_i \begin{bmatrix}
\Lambda_t(\lambda_0)\otimes I_n \\
\widehat{N}_t(\lambda_0)(\lambda_0 B+A)(\Lambda_t(\lambda_0)\otimes I_n)
\end{bmatrix}x_i = \\
&\begin{bmatrix}
\Lambda_t(\lambda_0)\otimes I_n \\
-\widehat{N}_t(\lambda_0)(\lambda_0 B+A)(\Lambda_t(\lambda_0)\otimes I_n)
\end{bmatrix}\sum_{i=1}^p \alpha_i c_i = \\
&\begin{bmatrix}
\Lambda_t(\lambda_0)\otimes I_n \\
\widehat{N}_t(\lambda_0)(\lambda_0 B+A)(\Lambda_t(\lambda_0)\otimes I_n)
\end{bmatrix}x,
\end{align*}
where $0\neq x:=\sum_{i=1}^p \alpha_i x_i\in\mathcal{N}_r(P(\lambda_0))$.
\end{proof}

In Theorem \ref{thm:recover_eig_even1} we present the eigenvector recovery procedures for symmetric or skew-symmetric even-degree matrix polynomials with nonsingular leading coefficients. 
Recall that matrix polynomials with nonsingular leading coefficients do not have eigenvalues at infinity.
\begin{theorem}\label{thm:recover_eig_even1}
Let $P(\lambda)=\sum_{k=0}^d P_k\lambda^k \in\FF[\lambda]^{n\times n}$ be a regular even-degree matrix polynomial with nonsingular leading coefficient. Let $\mathcal{L}(\lambda)$ be a strong linearization of $P(\lambda)$ obtained from a modified  block Kronecker pencil \eqref{eq:linearization_even} with $t=d/2$.
Then, if $z\in\mathbb{F}^{nd\times 1}$ is a right eigenvector of $\mathcal{L}(\lambda)$ with finite eigenvalue $\lambda_0$, then the $(t+1)$th block of $z$ is a right eigenvector of $P(\lambda)$ with finite eigenvalue $\lambda_0$.
\end{theorem}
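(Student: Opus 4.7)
The plan is to deduce Theorem \ref{thm:recover_eig_even1} as an essentially immediate corollary of Lemma \ref{lemma:recover_aux2}. The hypotheses of the theorem are exactly those of that lemma: $P(\lambda)$ is regular and even-degree with nonsingular leading coefficient, and $\mathcal{L}(\lambda)$ is a strong linearization of $P(\lambda)$ built from a modified block Kronecker pencil with $t=d/2$. Hence Lemma \ref{lemma:recover_aux2} immediately supplies, for any right eigenvector $z$ of $\mathcal{L}(\lambda)$ with finite eigenvalue $\lambda_0$, a right eigenvector $x$ of $P(\lambda)$ with the same eigenvalue such that
\[
z=\begin{bmatrix}\Lambda_t(\lambda_0)\otimes I_n\\ \widehat{N}_t(\lambda_0)(\lambda_0 B+A)(\Lambda_t(\lambda_0)\otimes I_n)\end{bmatrix}x.
\]

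First I would partition $z$ into $d=2t$ blocks of size $n$ and note that its first $t+1$ blocks are exactly the blocks of $(\Lambda_t(\lambda_0)\otimes I_n)x$. The key observation is then purely mechanical: from the definition \eqref{eq:Lambda} of $\Lambda_t(\lambda)$, the vector $\Lambda_t(\lambda_0)\in \mathbb{F}^{t+1}$ has entries $\lambda_0^{t},\lambda_0^{t-1},\ldots,\lambda_0,1$ in order, so its $(t+1)$th entry equals $1$. Consequently, the $(t+1)$th block of $\Lambda_t(\lambda_0)\otimes I_n$ is the identity matrix $I_n$, and the $(t+1)$th block of $z$ coincides with $x$. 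Since $x$ is a right eigenvector of $P(\lambda)$ with finite eigenvalue $\lambda_0$ by Lemma \ref{lemma:recover_aux2}, this is exactly the conclusion of the theorem.

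There is no real obstacle here, as all the substantive work has already been packaged into Lemma \ref{lemma:recover_aux2} (which in turn rests on the one-sided factorization \eqref{eq:right_fact} of Lemma \ref{lemma:recover_aux1}); the remaining step is just reading off the appropriate block of $\Lambda_t(\lambda_0)\otimes I_n$. The only thing worth double-checking is the indexing convention, namely that ``the $(t+1)$th block'' in the statement matches the position in which $I_n$ sits inside $\Lambda_t(\lambda_0)\otimes I_n$; with the convention $\Lambda_t(\lambda)^T=[\lambda^t\ \cdots\ \lambda\ 1]$ used throughout the paper, this alignment is exactly what one gets, so the proof reduces to a one-sentence argument invoking Lemma \ref{lemma:recover_aux2} and inspecting the $(t+1)$th block of its right-hand side.
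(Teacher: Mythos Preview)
Your proposal is correct and takes essentially the same approach as the paper: the paper's proof is the single sentence that the result follows immediately from Lemma~\ref{lemma:recover_aux2} combined with the fact that the $(t+1)$th block of $\Lambda_t(\lambda)\otimes I_n$ is the identity matrix $I_n$. Your extra remarks about the block partition and indexing convention are consistent with this and simply make explicit what the paper leaves implicit.
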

\begin{proof}
The result follows immediately from Lemma \ref{lemma:recover_aux2} combined with the fact that the $(t+1)$ block of $\Lambda_t(\lambda)\otimes I_n$ is the identity matrix $I_n$.
\end{proof}

When the matrix polynomial $P(\lambda)$ has nonsingular trailing coefficient, a  strong linearization $\mathcal{L}(\lambda)$ may be obtained from the reversal of a modified block Kronecker pencil \eqref{eq:linearization_even} of the matrix polynomial $\rev P(\lambda)$, as we have seen at the end of Section \ref{sec:structured-lin:even}.
In Theorem \ref{thm:recover_eig_even2} we present the eigenvector recovery procedures for this kind of strong linearizations.
Recall that matrix polynomials with nonsingular trailing coefficients do not have  the eigenvalue $\lambda_0=0$.
\begin{theorem}\label{thm:recover_eig_even2}
Let $P(\lambda)=\sum_{k=0}^d P_k\lambda^k \in\FF[\lambda]^{n\times n}$ be a regular even-degree matrix polynomial with nonsingular trailing coefficient, let $\widetilde{\mathcal{L}}(\lambda)$ be a strong linearization of $\rev P(\lambda)$ obtained from  a modified block Kronecker pencil \eqref{eq:linearization_even} with $t=d/2$, and let $\mathcal{L}(\lambda)=\rev \widetilde{\mathcal{L}}(\lambda)$, which is a strong linearization of $P(\lambda)$.
Then the following statements hold.
\begin{enumerate}
\item[{\rm (a)}] If $z\in\mathbb{F}^{nd\times 1}$ is a right eigenvector of $\mathcal{L}(\lambda)$ with nonzero finite eigenvalue $\lambda_0$, then the $(t+1)$th block of $z$ is a right eigenvector of $P(\lambda)$ with finite eigenvalue $\lambda_0$.
\item[{\rm (b)}] If $z\in\mathbb{F}^{nd\times 1}$ is a right eigenvector of $\mathcal{L}(\lambda)$ for the eigenvalue $\infty$, then the $(t+1)$th block of $z$ is a right eigenvector of $P(\lambda)$ for the eigenvalue $\infty$.
\end{enumerate}
\end{theorem}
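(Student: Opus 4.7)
The plan is to reduce Theorem \ref{thm:recover_eig_even2} to the already-proved Theorem \ref{thm:recover_eig_even1} by passing through the reversal, using Lemma \ref{lemma:rev} to transport eigenpairs back and forth. The key observation is that since $P(\lambda)$ has nonsingular trailing coefficient $P_0$, the polynomial $\rev P(\lambda)$ has nonsingular leading coefficient $P_0$, so the pair $\bigl(\rev P(\lambda), \widetilde{\mathcal{L}}(\lambda)\bigr)$ falls squarely into the hypotheses of Theorem \ref{thm:recover_eig_even1}. Note also that Lemma \ref{lemma:rev} applies verbatim to any matrix polynomial, including the grade-$1$ pencils $\mathcal{L}(\lambda)$ and $\widetilde{\mathcal{L}}(\lambda)=\rev\mathcal{L}(\lambda)$.

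For part (a), I would proceed as follows. Starting from a right eigenvector $z$ of $\mathcal{L}(\lambda)=\rev\widetilde{\mathcal{L}}(\lambda)$ with nonzero finite eigenvalue $\lambda_0$, Lemma \ref{lemma:rev} applied to the pencil $\widetilde{\mathcal{L}}(\lambda)$ yields that $z$ is a right eigenvector of $\widetilde{\mathcal{L}}(\lambda)$ with the nonzero finite eigenvalue $1/\lambda_0$. Since $\widetilde{\mathcal{L}}(\lambda)$ is a strong linearization of $\rev P(\lambda)$ arising from a modified block Kronecker pencil, Theorem \ref{thm:recover_eig_even1} applies and tells us that the $(t+1)$th block $z_{t+1}$ of $z$ is a right eigenvector of $\rev P(\lambda)$ with eigenvalue $1/\lambda_0$. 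A final application of Lemma \ref{lemma:rev}, this time to $P(\lambda)$ itself, converts this to $z_{t+1}$ being a right eigenvector of $P(\lambda)$ with eigenvalue $\lambda_0$.

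For part (b), the same three-step strategy works once one keeps in mind the convention that $\infty$ and $0$ are reciprocals. If $z$ is a right eigenvector of $\mathcal{L}(\lambda)$ for the eigenvalue $\infty$, then Lemma \ref{lemma:rev} applied to $\widetilde{\mathcal{L}}(\lambda)$ gives that $z$ is a right eigenvector of $\widetilde{\mathcal{L}}(\lambda)$ with the \emph{finite} eigenvalue $0$. This is crucial, because Theorem \ref{thm:recover_eig_even1} applies only to finite eigenvalues; the hypothesis that $P_0$ is nonsingular guarantees that this is legitimate. Theorem \ref{thm:recover_eig_even1} therefore yields that $z_{t+1}$ is a right eigenvector of $\rev P(\lambda)$ with eigenvalue $0$, and one last use of Lemma \ref{lemma:rev} gives that $z_{t+1}$ is a right eigenvector of $P(\lambda)$ with eigenvalue $\infty$.

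There is no genuine obstacle here; the only point requiring a bit of care is verifying that Lemma \ref{lemma:rev} may be applied both at the pencil level (grade $1$) and at the polynomial level (grade $d$), and that the ``eigenvalue at $\infty$'' case fits into the framework of Theorem \ref{thm:recover_eig_even1} by first passing to the eigenvalue $0$ of the reversed pencil. Once the bookkeeping of the reversal operations and the reciprocal convention is clearly stated, each step is immediate, and the same eigenvector component $z_{t+1}$ is produced in both (a) and (b) because that component is untouched by the reversal operation on the pencil.
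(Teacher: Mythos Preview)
Your proof is correct and follows essentially the same approach as the paper: both arguments use Lemma \ref{lemma:rev} to transport the eigenpair from $\mathcal{L}(\lambda)$ to $\widetilde{\mathcal{L}}(\lambda)=\rev\mathcal{L}(\lambda)$, invoke Theorem \ref{thm:recover_eig_even1} for the pair $(\rev P(\lambda),\widetilde{\mathcal{L}}(\lambda))$, and then apply Lemma \ref{lemma:rev} once more to return to $P(\lambda)$. The paper merely compresses parts (a) and (b) into a single argument by taking $\lambda_0\neq 0$ (finite or infinite), whereas you spell out the $\lambda_0=\infty$ case separately.
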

\begin{proof}
Using Lemma \ref{lemma:rev}, parts (a) and (b) follow from the following argument.
Let $z$ be a right eigenvector of $\mathcal{L}(\lambda)$ with eigenvalue $\lambda_0\neq 0$, and let us denote by $x$ the $(t+1)$th block of the vector $z$.
Then, $z$ is a right eigenvector of $\rev \mathcal{L}(\lambda)$ with eigenvalue $1/\lambda_0$.
From Theorem \ref{thm:recover_eig_even1} we get that $x$ is a right eigenvector of $\rev P(\lambda)$ with eigenvalue $1/\lambda_0$, which implies that $x$ is a right eigenvector of $P(\lambda)$ with eigenvalue $\lambda_0$.
\end{proof}

\section{Conclusions}\label{sec:conclusions}

In this paper we have introduced a new framework for symmetric and skew-symmetric linearizations that might include most of the symmetric and skew-symmetric linearizations obtained from Fiedler pencils with repetitions \cite{FPR1}.
To this aim we have introduced the families of (modified) symmetrizable block Kronecker pencils and (modified) skew-symmetrizable block Kronecker pencils, which belong, respectively, to the sets of  minimal bases pencils and strong minimal bases pencils \cite{Fiedler-like}.
Symmetrizable and skew-symmetrizable block Kronecker pencils have been used to construct structure-preserving linearizations of symmetric and skew-symmetric odd-degree matrix polynomials, and we have shown that these linearizations are  strong regardless of whether the matrix polynomials are regular or singular.
Among them, the simplest one is 
\[
\left[ \begin{array}{cccc|ccc}
\lambda P_d + P_{d-1} & & & & -I_n \\
& \lambda P_{d-2}+P_{d-3} & & & \lambda I_n & \ddots\\
& & \ddots & & & \ddots & -I_n \\
& & & \lambda P_1+P_0 &  &  & \lambda I_n \\ \hline
-\sigma I_n & \sigma \lambda  I_n & & & &  \\
& \ddots & \ddots & & & & \\
& & -\sigma I_n & \lambda \sigma I_n
\end{array} \right],
\]
where $\sigma=1$ if $P(\lambda)$ is symmetric or $\sigma=-1$ if $P(\lambda)$ is skew-symmetric, which are a permuted version of the famous block-tridiagonal symmetric or skew-symmetric companion forms \cite{Greeks,Greeks2,Skew}.
Even-degree structured matrix polynomials do not always have structure-preserving linearizations.
However, we have shown that  modified symmetrizable and skew-symmetrizable block Kronecker pencils can be used to construct structure-preserving strong linearizations for symmetric or skew-symmetric even-degree matrix polynomials when their leading or trailing coefficients are nonsingular.
Among these linearizations (when the leading coefficients are nonsingular), the simplest one is
\[
\left[\begin{array}{ccccc|ccc}
-P_d & \lambda P_d & & & & 0 & \cdots & 0 \\
\lambda P_d & \lambda P_{d-1}+P_{d-2} &  &  &  & -I_n \\
& & \lambda P_{d-3}+P_{d-4} & & & \lambda I_n & \ddots  \\
& & & \ddots & & & \ddots & -I_n \\
& & & & \lambda P_1+P_0 & & & \lambda I_n \\ \hline
0 & -\sigma I_n & \sigma \lambda I_n & & & & \\
\vdots & & \ddots & \ddots & & & \\
0 & & & -\sigma I_n & \sigma \lambda I_n & & 
\end{array}\right].
\]
where $\sigma=1$ if $P(\lambda)$ is symmetric or $\sigma=-1$ if $P(\lambda)$ is skew-symmetric, which can be permuted into a block-tridiagonal pencil.
Since the families of symmetric and skew-symmetric block Kronecker pencils and modified symmetric and skew-symmetric block Kronecker pencils belong, respectively, to the  sets of strong minimal bases pencils or minimal bases pencils, they inherit all their desirable properties for numerical applications.
In particular, we have shown that eigenvectors, minimal indices, and minimal bases
of matrix  polynomials are easily recovered from those of any of the linearizations constructed in this work.

\end{document}